\newtheorem{theorem}{Theorem}
\newtheorem{lemma}[theorem]{Lemma}
\newtheorem{corollary}[theorem]{Corollary}
\newtheorem{assumption}[theorem]{Assumption}
\theoremstyle{remark}
\newtheorem{remark}[theorem]{Remark}
\newtheorem*{remark*}{Remark}
\newtheorem{example}[theorem]{Example}
\newcommand{\R}{\mathbb{R}}
\newcommand{\N}{\mathbb{N}}
\newcommand{\E}{\mathbb{E}}
\renewcommand{\P}{\mathbb{P}} 
\newcommand{\coloneq}{\mathrel{\mathop:}=}
\renewcommand{\epsilon}{\varepsilon}
\newcommand{\bP}{\mathbb{P}}
\newcommand{\bF}{\mathbb{F}}
\newcommand{\cA}{\mathcal{A}}
\newcommand{\cP}{\mathcal{P}}
\newcommand{\cS}{\mathcal{S}}
\newcommand{\eqreff}[1]{[\ref{#1}]}
\newcommand{\opt}{{\star}}
\newcommand{\Aoptim}[1]{\cA^\opt_{#1}}
\newcommand{\Boptim}{B^\opt}
\newcommand{\interior}[1]{{#1}^{o}}
\DeclareFontFamily{U}{rcjhbltx}{}
\DeclareFontShape{U}{rcjhbltx}{m}{n}{<->rcjhbltx}{}
\DeclareSymbolFont{hebrewletters}{U}{rcjhbltx}{m}{n}
\DeclareMathSymbol{\mem}{\mathord}{hebrewletters}{109}
\DeclareMathSymbol{\resh}{\mathord}{hebrewletters}{114}
\DeclareMathSymbol{\he}{\mathord}{hebrewletters}{104}
\DeclareMathSymbol{\qof}{\mathord}{hebrewletters}{113}
\DeclareMathSymbol{\bet}{\mathord}{hebrewletters}{98}
\begin{document}

\title[Sensitivity analysis of Wasserstein DRO problems]{Sensitivity analysis of Wasserstein distributionally robust optimization problems}
\date{\today}

\author{Daniel Bartl$^1$}
\address{$^1$Department of Mathematics, University of Vienna}
\author{Samuel Drapeau$^2$}
\address{$^2$School of Mathematical Sciences and Shanghai Advanced Institute of Finance (SAIF/CAFR), Shanghai Jiao Tong University}
\author{Jan Ob{\l}{\'o}j$^3$}
\author{Johannes Wiesel$^3$}
\address{$^3$Department of Statistics, Columbia University}

\thanks{Support from the European Research Council under the EU's $7^{\text{th}}$ FP / ERC grant agreement no. 335421, the Vienna Science and Technology Fund (WWTF) project MA16-021 and the Austrian Science Fund (FWF) project P28661 as well as the National Science Foundation of China, Grant Numbers 11971310 and 11671257, Shanghai Jiao Tong University, Grant ``Assessment of Risk
and Uncertainty in Finance'' number AF0710020 are gratefully acknowledged. We thank Jose Blanchet, Peyman Mohajerin Esfahani, Daniel Kuhn and Mike Giles for helpful comments on an earlier version of the paper.
}
\keywords{Robust stochastic optimization, Sensitivity analysis, Uncertainty quantification, Non-parametric uncertainty, Wasserstein metric} 

%

\maketitle


\begin{abstract}
We consider sensitivity of a generic stochastic optimization problem to model uncertainty. We take a non-parametric approach and capture model uncertainty using Wasserstein balls around the postulated model. We provide explicit formulae for the first order correction to both the value function and the optimizer and further extend our results to optimization under linear constraints. 
We present applications to statistics, machine learning, mathematical finance and uncertainty quantification. In particular, we provide explicit first-order approximation for square-root LASSO regression coefficients and deduce coefficient shrinkage compared to the ordinary least squares regression. We consider robustness of call option pricing and deduce a new Black-Scholes sensitivity, a non-parametric version of the so-called Vega. We also compute sensitivities of optimized certainty equivalents in finance and propose measures to quantify robustness of neural networks to adversarial examples. \end{abstract}

\date{\today}
\maketitle

\section{Introduction}
We consider a generic stochastic optimization problem
\begin{equation}\label{eq:stochoptim}
 \inf_{a\in\cA}\int_{\cS} f\left(x,a \right)\,\mu(dx),  
\end{equation}
where $\cA$ is the set of actions or choices, $f$ is the loss function and $\mu$ is a probability measure over the space of states $\cS$. Such problems are found across the whole of applied mathematics. The measure $\mu$ is the crucial input and it could represent, e.g., a dynamic model of the system, as is often in mathematical finance or mathematical biology, or the empirical measure of observed data points, or the training set, as is the case in statistics and machine learning applications. In virtually all the cases, there is a certain degree of uncertainty around the choice of $\mu$ coming from modelling choices and simplifications, incomplete information, data errors, finite sample error, etc. It is thus very important to understand the influence of changes in $\mu$ on \eqref{eq:stochoptim}, both on its value and on its optimizer. 
Often, the choice of $\mu$ is done in two stages: first a parametric family of models is adopted and then the values of the parameters are fixed. Sensitivity analysis of \eqref{eq:stochoptim} with changing parameters is a classical topic explored in parametric programming and statistical inference, e.g.,  \cite{Armacost:1974ig,Vogel:2007jr,Bonnans:2013vx}. It also underscores a lot of progress in the field of uncertainty quantification, see \cite{UQHandbook}. Considering $\mu$ as an abstract parameter, the mathematical programming literature looked into qualitative and quantitative stability of \eqref{eq:stochoptim}. We refer to \cite{Dupacova:1990by,Romisch:2003fe} and the references therein. When $\mu$ represents data samples, there has been a considerable interest in the optimization community in designing algorithms which are robust and, in particular, do not require excessive hypertuning, see \cite{Asi:2019ep} and the references therein. 

A more systematic approach to model uncertainty in \eqref{eq:stochoptim} is offered by the distributionally robust optimization problem
\begin{align}
\label{eq. minmax}
V(\delta)\coloneq\inf_{a\in\cA}V(\delta,a)\coloneq \inf_{a\in\cA} \sup_{\nu \in B_{\delta}\left( \mu \right)}\int_{\mathcal{S}} f\left( x,a \right)\,\nu(dx),
\end{align}
where $B_{\delta}\left( \mu \right)$ is a ball of radius $\delta$ around $\mu$ in the space of probability measures, as specified below. Such problems greatly generalize more classical robust optimization and have been studied extensively in operations research and machine learning in particular, we refer the reader to \cite{Rahimian:2019vy} and the references therein. Our goal in this paper is to understand the behaviour of these problems for small $\delta$. 
Our main results compute first-order behaviour of $V(\delta)$ and its optimizer for small $\delta$. This offers a measure of sensitivity to errors in model choice and/or specification as well as points in the abstract direction, in the space of models, in which the change is most pronounced. 
We use examples to show that our results can be applied across a wide spectrum of science.

This paper is organised as follows. We first present the main results and then, in section \ref{sec:applications}, explore their applications. Further discussion of our results and the related literature is found in section \ref{sec:lit}, which is then followed by the proofs. Online appendix \cite{SI} contains many supplementary results and remarks, as well as some more technical arguments from the proofs.

\section{Main results}\label{sec:main}
Take $d,k\in \N$, endow $\R^d$ with the Euclidean norm $|\cdot|$ and write $\interior{\Gamma}$ for the interior of a set $\Gamma$. 
Assume that $\mathcal{S}$ is a closed convex subset of $\mathbb{R}^d$.
Let $\mathcal{P}(\mathcal{S})$ denote the set of all (Borel) probability measures on $\mathcal{S}$. 
Further fix a seminorm $\|\cdot\|$ on $\mathbb{R}^d$ and denote by $\|\cdot\|_\ast$ its (extended) dual norm, i.e., $\|y\|_\ast:=\sup\{ \langle x,y\rangle : \|x\|\leq 1\}$. In particular, for $\|\cdot\|=|\cdot|$ we also have $\|\cdot\|_\ast=|\cdot|$. For $\mu,\nu\in \mathcal{P}(\mathcal{S})$, we define the $p$-Wasserstein distance as 
\begin{equation*}
    W_p(\mu, \nu)=\inf\left\{\int_{\mathcal{S}\times\mathcal{S}} \|x-y\|_\ast^p\,\pi(dx,dy)\colon \pi \in \mathrm{Cpl}(\mu,\nu) \right\}^{1/p},
\end{equation*}
where $\mathrm{Cpl}(\mu,\nu)$ is the set of all probability measures $\pi$ on $\mathcal{S}\times\mathcal{S}$ with first marginal $\pi_1:=\pi(\cdot\times\mathcal{S})=\mu$ and second marginal $\pi_2:=\pi(\mathcal{S}\times\cdot)=\nu$.
Denote the Wasserstein ball 
\begin{equation*}
    B_{\delta}(\mu) = \left\{\nu \in \mathcal{P}(\mathcal{S}) :  W_p(\mu,\nu) \leq \delta  \right\}
\end{equation*}
of size $\delta\geq0$ around $\mu$.
Note that, taking a suitable probability space $(\Omega, \bF, \bP)$ and a random variable $X\sim \mu$, we have the following probabilistic representation of $V(\delta,a)$:
 \[ \sup_{\nu \in B_{\delta}\left( \mu \right)}\int_{\mathcal{S}} f\left( x,a \right)\,\nu(dx) 
 = \sup_{Z} \E_{\bP}[f(X+Z,a)]
\] 
where the supremum is taken over all $Z$ satisfying $X+Z\in\mathcal{S}$ almost surely and $\E_{\bP}[ \|Z\|_\ast^p]\leq \delta^p$.
Wasserstein distances and optimal transport techniques have proved to be powerful and versatile tools in a multitude of applications, from economics \cite{Chiappori:10,CarlierEkland:10} to image recognition \cite{Peyre:2019dl}. The idea to use Wasserstein balls to represent model uncertainty was pioneered in 
\cite{Pflug:2007hr} in the context of investment problems. 
When sampling from a measure with a finite $p^{\textrm{th}}$ moment, the measures converge to the true distribution and Wasserstein balls around the empirical measures have the interpretation of confidence sets, see \cite{Fournier:2014kk}. In this setup, the radius $\delta$ can then be chosen as a function of a given confidence level $\alpha$ and the sample size $N$. This yields finite samples guarantees and asymptotic consistency, see \cite{MohajerinEsfahani:2018hd,OblojWieselAOS}, and justifies the use of the Wasserstein metric to capture model uncertainty. The value $V(\delta,a)$ in \eqref{eq. minmax} has a  dual representation, see \cite{gao2016distributionally,blanchet2019quantifying}, which has led to significant new developments in distributionally robust optimization, e.g., \cite{MohajerinEsfahani:2018hd,blanchet2019robust,Kuhn:2019vu, shafieezadeh2019regularization}. \\
Naturally, other choices for the distance on the space of measures are also possible: such as the Kullblack-Leibler divergence, see \cite{lam2016robust} for general sensitivity results and \cite{Calafiore:07} for applications in portfolio optimization, or the Hellinger distance, see \cite{lindsay1994efficiency} for a statistical robustness analysis. We refer to section \ref{sec:lit}
for a more detailed analysis of the state of the art in these fields.  Both of these approaches have good analytic properties and often lead to theoretically appealing closed-form solutions. However, they are also very restrictive since any measure in the neighborhood of $\mu$ has to be absolutely continuous with respect to $\mu$. In particular, if $\mu$ is the empirical measure of $N$ observations then measures in its neighborhood have to be supported on those fixed $N$ points. To obtain meaningful results it is thus necessary to impose additional structural assumptions, which are often hard to justify solely on the basis of the data at hand and, equally importantly, create another layer of model uncertainty themselves. We refer to \cite[sec.\ 1.1]{gao2016distributionally} for further discussion of potential issues with $\phi$-divergences. The Wasserstein distance, while harder to handle analytically, is more versatile and does not require any such additional assumptions.

Throughout the paper we take the convention that continuity and closure are understood w.r.t.\ $|\cdot|$. 
We assume that $\cA\subset \R^k$ is convex and closed and that the seminorm $\|\cdot\|$ is strictly convex in the sense that for two elements $x,y\in\mathbb{R}^d$ with $||x\|=\|y\|=1$ and $\|x-y\|\neq0$, we have $\|\frac{1}{2}x + \frac{1}{2}y\|<1$ (note that this is satisfied for every $l^s$-norm $|x|_s:=(\sum_{i=1}^d |x_i|^s)^{1/s}$ for $s>1$). 
We fix $p\in(1,\infty)$, let $q:=p/(p-1)$ so that $1/p+1/q=1$, and fix $\mu\in \cP(\cS)$ such that the boundary of $\cS\subset \R^d$ has $\mu$--zero measure and 
$\int_{\cS} |x|^p\,\mu(dx)<\infty$. Denote by $\Aoptim{\delta}$ the set of optimizers for $V(\delta)$ in \eqref{eq. minmax}. 

\begin{assumption} \label{ass:main} 
    The loss function $f\colon \cS\times \cA \to \R$ satisfies
	\begin{itemize}
	\item $x\mapsto f(x,a)$ is differentiable on $\interior{\mathcal{S}}$ for every $a\in\cA$.
	Moreover $(x,a)\mapsto \nabla_x f(x,a)$ is continuous and for every $r>0$ there is $c>0$ such that $|\nabla_x f(x,a)|\leq c(1+|x|^{p-1})$ for all $x\in\cS$ and $a\in \cA$ with $|a|\leq r$.
	\item For all $\delta\ge 0$ sufficiently small we have $\Aoptim{\delta}\neq\emptyset$ and for every sequence $(\delta_n)_{n\in \N}$ such that $\lim_{n\to \infty} \delta_n=0$ and $(a^\opt_n)_{n\in \N}$ such that $a^\opt_n\in \Aoptim{\delta_n}$ for all $n\in \N$ there is a subsequence which converges to some $a^\opt\in \Aoptim{0}$.
	\end{itemize}
\end{assumption}

The above assumption is not restrictive: the first part merely ensures existence of $\|\nabla_x f(\cdot, a^\opt)\|_{L^q(\mu)}$, while the second part is satisfied as soon as either $\mathcal{A}$ is compact or $V(0, \cdot)$ is coercive, which is the case in most examples of interest, see \cite[Lemma~ \ref{lem:coercive}]{SI} for further comments.

\begin{theorem}\label{thm:main}
	If Assumption \ref{ass:main} holds then $V'(0)$ is given by
	\[
	\Upsilon:=\lim_{\delta\to 0} \frac{V(\delta)-V(0)}{\delta}
	=\inf_{a^\opt \in \Aoptim{0}}\Big(\int_{\mathcal{S}} \|\nabla_x f(x,a^\opt)\|^q\,\mu(dx)\Big)^{1/q} .\]
\end{theorem}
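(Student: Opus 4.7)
The plan is to first establish the inner first-order expansion
\[
g(a):=\Big(\int_\cS \|\nabla_x f(x,a)\|^q\,\mu(dx)\Big)^{1/q} \;=\; \lim_{\delta\to 0}\frac{V(\delta,a)-V(0,a)}{\delta}
\]
for fixed $a\in\cA$, and then combine it with the compactness clause of Assumption~\ref{ass:main} to obtain the outer expansion for $V(\delta)=\inf_{a}V(\delta,a)$.

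For the inner upper bound I would use the probabilistic representation of $V(\delta,a)$: write $\nu=\mathcal{L}(X+Z)$ with $X\sim\mu$ and $\E_\bP\|Z\|_*^p\leq\delta^p$, and Taylor expand. The linear term $\E_\bP\langle \nabla_x f(X,a),Z\rangle$ is bounded by $\delta g(a)$ via the definition of $\|\cdot\|_*$ as dual to $\|\cdot\|$ followed by H\"older's inequality in $L^p$--$L^q$. The remainder $\E_\bP\int_0^1\langle \nabla_x f(X+tZ,a)-\nabla_x f(X,a),Z\rangle\,dt$ is $o(\delta)$ uniformly over admissible $Z$: the identity $q(p-1)=p$, the growth bound $|\nabla_x f(x,a)|\leq c(1+|x|^{p-1})$, and continuity of $\nabla_x f$ imply, via a split on $\{\|Z\|_*\leq R_\delta\}$ with $R_\delta\downarrow 0$ slowly and dominated convergence, that $\|\nabla_x f(X+tZ,a)-\nabla_x f(X,a)\|\to 0$ in $L^q$ as $\|Z\|_*\to 0$ in $L^p$. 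For the matching lower bound I would construct an explicit near-optimizer: pick a measurable selector $J(\xi)\in\arg\max_{\|z\|_*\leq 1}\langle\xi,z\rangle$, so that $\langle\xi,J(\xi)\rangle=\|\xi\|$, and set $Z_\delta(X)=\lambda_\delta\|\nabla_x f(X,a)\|^{q/p}\,J(\nabla_x f(X,a))$ with $\lambda_\delta=\delta/g(a)^{q/p}$, truncated to an increasing family $A_n\subset \interior{\cS}$ with $\mu(A_n)\uparrow 1$ and $\mathrm{dist}(A_n,\partial\cS)>0$ so that $X+Z_\delta\in\cS$ (the assumption $\mu(\partial\cS)=0$ ensures this truncation is harmless). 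A direct calculation shows $Z_\delta$ saturates both H\"older and the Wasserstein constraint, giving $\E_\bP[f(X+Z_\delta,a)-f(X,a)]=\delta g(a)+o(\delta)$.

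To pass to the outer problem I observe that the inner expansion is locally uniform in $a$: both estimates depend on $a$ only through $\nabla_x f(\cdot,a)$, which is continuous in $a$ with a $p$-growth majorant uniform on compact subsets of $\cA$; in particular $g$ is continuous. The upper bound on $\Upsilon$ is then immediate: for any $a^\opt\in\Aoptim{0}$, $V(\delta)\leq V(\delta,a^\opt)$ and $V(0,a^\opt)=V(0)$, so taking $\limsup$ and then infimum over $\Aoptim{0}$ gives $\limsup_{\delta\to 0}\tfrac{V(\delta)-V(0)}{\delta}\leq \inf_{\Aoptim{0}} g$. For the matching lower bound, pick $a^\opt_\delta\in\Aoptim{\delta}$; by Assumption~\ref{ass:main}, along any sequence $\delta_n\downarrow 0$ some subsequence satisfies $a^\opt_{\delta_{n_k}}\to a^\opt\in\Aoptim{0}$. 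Since $V(0)\leq V(0,a^\opt_{\delta_{n_k}})$, one has $V(\delta_{n_k})-V(0)\geq V(\delta_{n_k},a^\opt_{\delta_{n_k}})-V(0,a^\opt_{\delta_{n_k}})$, and the locally uniform inner expansion together with continuity of $g$ yields $\liminf_k\tfrac{V(\delta_{n_k})-V(0)}{\delta_{n_k}}\geq g(a^\opt)\geq \inf_{\Aoptim{0}} g$. A standard subsequence-of-subsequence argument promotes this to $\liminf_{\delta\to 0}\tfrac{V(\delta)-V(0)}{\delta}\geq \inf_{\Aoptim{0}}g$, completing the proof.

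The hardest step is the locally uniform remainder control: one must bound the Taylor remainder uniformly over the \emph{pair} $(a,Z)$ as $a$ ranges over a compact neighbourhood of $\Aoptim{0}$ and $Z$ over the $L^p$-ball of radius $\delta$, while simultaneously keeping $X+Z\in\cS$ in the explicit construction. Admissibility is handled by the truncation sets $A_n$ and $\mu(\partial\cS)=0$; the Taylor remainder by the two-region split on $\|Z\|_*$ combined with the $a$-independent $p$-growth bound and continuity of $\nabla_x f$. Everything else reduces to Hölder, a measurable selection, and a direct optimization.
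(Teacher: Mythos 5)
Your proposal is correct and mirrors the paper's argument in all essential respects: the upper bound comes from the dual pairing $\langle\nabla_x f,Z\rangle\leq\|\nabla_x f\|\,\|Z\|_*$ followed by $L^q$--$L^p$ H\"older, the lower bound from an explicit deterministic transport that saturates both H\"older and the Wasserstein constraint (your $Z_\delta(X)$ is precisely the paper's $T(x)$, and your selector $J$ is the paper's $h$ from its Lemma on the dual norm), the boundary of $\cS$ is handled by truncation using $\mu(\partial\cS)=0$, and the outer infimum is reached via the subsequence compactness of $\Aoptim{\delta}$ in Assumption~\ref{ass:main}. The only genuine presentational divergence is that you split off an explicit Taylor remainder to be shown $o(\delta)$ uniformly over admissible $Z$ and package the outer passage as ``locally uniform inner expansion plus continuity of $g$,'' whereas the paper keeps the fundamental-theorem-of-calculus integrand $\int_0^1\langle\nabla_x f(x+t(y-x),a^\opt),\,y-x\rangle\,dt$ intact, applies H\"older $t$-slice by $t$-slice, and then passes to the joint limit $(\delta,a^\opt_\delta)\to(0,a^\opt)$ in one dominated-convergence step using joint continuity of $\nabla_x f$ and $W_p$-convergence of $\pi^\delta$ to the diagonal coupling. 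The paper's organization is marginally leaner (one joint limit instead of a uniform remainder estimate plus a continuity-of-$g$ step); yours isolates the first-order term more cleanly and would generalize more transparently to a second-order expansion, but both routes rest on the same estimates and the same use of the assumption.
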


\begin{remark}
Inspecting the proof, defining $$\tilde{V}(\delta)= \inf_{a^\opt \in \Aoptim{0}} \sup_{\nu \in B_{\delta}\left( \mu \right)}\int_{\mathcal{S}} f\left( x,a^\opt \right)\,\nu(dx) $$
we obtain $\tilde{V}'(0)=V'(0)$.  This means that for small $\delta>0$ there is no first-order gain from optimizing over all $a\in \cA$ in the definition of $V(\delta)$ when compared with restricting simply to $a^\opt\in \Aoptim{0}$, as in $\tilde{V}(\delta)$.
\end{remark}
\noindent The above result naturally extends to computing sensitivities of robust problems, i.e., $V'(r)$, see \cite[Corollary \ref{Cor:sensitivity}]{SI}, as well as to the case of stochastic optimization under linear constraints, see \cite[Theorem \ref{thm.constraints}]{SI}. We recall that $V(0,a)=\int_{\cS} f(x,a)\,\mu(dx)$.
\begin{assumption} \label{ass:sens}
Suppose the $f$ is twice continuously differentiable, $a^\opt\in \Aoptim{0}\cap \interior{\mathcal{A}}$ and
\begin{itemize}
\item $\sum_{i=1}^k |\nabla_{a_i}\nabla_x f(x,a)|\leq c(1+|x|^{p-1-\varepsilon})$ for some $\varepsilon>0$, $c>0$, all $x\in\mathbb{R}^d$ and all $a$ close to $a^\opt$.
\item The function $a\mapsto V(0,a)$ is twice continuously differentiable in the neighbourhood of $a^\opt$ and the matrix $\nabla^2_{a} V(0,a^\opt)$ is invertible.
\end{itemize}
 \end{assumption}
 
\begin{theorem}\label{thm:sens} 
Suppose $a^\opt\in \Aoptim{0}$ and $a^\opt_\delta\in \Aoptim{\delta}$ such that $a^\opt_\delta\to a^\opt$ as $\delta\to 0$ and Assumptions \ref{ass:main} and \ref{ass:sens} are satisfied. 
Then, if $\nabla_x f(x,a^\opt)\neq 0$ or  $\nabla_x\nabla_a f(x,a^\opt)= 0$  $\mu$-a.e., 
\begin{align*}
\beth&:= \lim_{\delta \to 0}\frac{a^\opt_{\delta}-a^\opt}{\delta}=-\Big(\int_{\mathcal{S}} \|\nabla_x f(x,a^\opt)\|^q\,\mu(dx)\Big)^{\frac{1}{q}-1} \\
&\quad\ \cdot (\nabla^2_a V(0,a^\opt))^{-1} \int_{\mathcal{S}} \frac{\nabla_{x}\nabla_a f(x,a^\opt)\, h(\nabla_x f(x,a^\opt))}{\|\nabla_x f(x,a^\opt)\|^{1-q}} \, \mu(dx),
\end{align*}
where $h\colon \mathbb{R}^d\setminus\{0\}\to \{x\in \R^d \ : \ \|x\|_\ast=1\}$ is the unique function satisfying $\langle \cdot, h(\cdot) \rangle =\|\cdot\|$, see \cite[Lemma \ref{dani:2}]{SI}. In particular, $h(\cdot)=\cdot/|\cdot|$ if $\|\cdot\|=|\cdot|$.
\end{theorem}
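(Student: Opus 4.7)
The plan is to invert the first-order optimality condition for $a^\opt_\delta$. Since $a^\opt\in\interior{\cA}$ and $a^\opt_\delta\to a^\opt$, for all sufficiently small $\delta$ the optimizer $a^\opt_\delta$ lies in $\interior{\cA}$, so $\nabla_a V(\delta,a^\opt_\delta)=0$. Writing $J(a):=\big(\int_{\cS}\|\nabla_x f(x,a)\|^q\,\mu(dx)\big)^{1/q}$, the first substantive step is to sharpen Theorem~\ref{thm:main} to the \emph{uniform} first-order expansion
\begin{equation*}
V(\delta,a)=V(0,a)+\delta J(a)+o(\delta),
\end{equation*}
where the $o(\delta)$ term holds uniformly for $a$ in a neighborhood of $a^\opt$. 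This is where the extra regularity in Assumption~\ref{ass:sens} enters: the bound $|\nabla_a\nabla_x f(x,a)|\le c(1+|x|^{p-1-\epsilon})$ gives that $a\mapsto\nabla_x f(\cdot,a)$ is locally Lipschitz in $L^q(\mu)$, so the two-sided bounds from the proof of Theorem~\ref{thm:main} (the explicit near-optimal perturbation on one side, the dual/Lagrangian upper bound on the other) can be pushed through uniformly in $a$.

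Given this uniform expansion, the same integrability bound together with the $C^2$-hypothesis on $V(0,\cdot)$ allows differentiation under the integral sign to upgrade to the uniform gradient expansion $\nabla_a V(\delta,a)=\nabla_a V(0,a)+\delta\,\nabla_a J(a)+o(\delta)$ near $a^\opt$. Taylor-expanding $\nabla_a V(0,\cdot)$ about $a^\opt$, using $\nabla_a V(0,a^\opt)=0$, and evaluating at $a=a^\opt_\delta$ yields
\begin{equation*}
0=\nabla^2_a V(0,a^\opt)(a^\opt_\delta-a^\opt)+\delta\,\nabla_a J(a^\opt)+o(\delta)+o(|a^\opt_\delta-a^\opt|).
\end{equation*}
Invertibility of $\nabla^2_a V(0,a^\opt)$ first produces the a priori bound $|a^\opt_\delta-a^\opt|=O(\delta)$, and then dividing by $\delta$ and sending $\delta\downarrow 0$ gives $\beth=-(\nabla^2_a V(0,a^\opt))^{-1}\nabla_a J(a^\opt)$. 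A chain-rule computation of $\nabla_a J(a^\opt)$, using the identity $\nabla_y\|y\|^q=q\|y\|^{q-1}h(y)$ (which is continuous on $\R^d$ and vanishes at the origin for $q>1$), produces the closed form stated in the theorem; the assumption that either $\nabla_x f(x,a^\opt)\neq 0$ or $\nabla_x\nabla_a f(x,a^\opt)=0$ holds $\mu$-a.e.\ guarantees that the integrand is $\mu$-a.e.\ well-defined (with $\|y\|^{q-1}h(y)$ interpreted by its continuous extension at $y=0$).

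The main obstacle is the uniform-in-$a$ first-order expansion of $V(\delta,a)$ near $a^\opt$, and its transfer to a gradient expansion. The transport/duality arguments in the proof of Theorem~\ref{thm:main} describe $V(\delta,a)$ for each \emph{fixed} $a$, but the implicit-function-style argument here needs joint control in $(\delta,a)$ strong enough to commute the limit $\delta\downarrow 0$ with differentiation in $a$. The strengthened moment bound on $\nabla_a\nabla_x f$ in Assumption~\ref{ass:sens} is what drives this upgrade, by providing uniform $L^q(\mu)$-continuity of $x\mapsto\nabla_x f(x,a)$ in $a$ and hence the uniform error control needed for the expansion.
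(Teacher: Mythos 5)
Your proposed route is genuinely different from the paper's, but it has two gaps that the paper's proof is specifically structured to avoid.

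First, the very first step assumes the first-order condition $\nabla_a V(\delta,a^\opt_\delta)=0$, which presupposes that $a\mapsto V(\delta,a)$ is differentiable at $a^\opt_\delta$ for small $\delta>0$. This is false in general even for smooth $f$: the supplementary material gives $f(x,a)=ax+a^2$, $\mu=\delta_0$, for which $V(\delta,a)=\delta|a|+a^2$, and $a^\opt_\delta=0$ is a kink of $V(\delta,\cdot)$. (That example falls outside the theorem's hypotheses because $\nabla_x f(x,a^\opt)\equiv 0$ while $\nabla_x\nabla_a f\neq 0$, but you nowhere use the hypothesis ``$\nabla_x f(x,a^\opt)\neq 0$ or $\nabla_x\nabla_a f(x,a^\opt)=0$ $\mu$-a.e.''\ to rescue differentiability of $V(\delta,\cdot)$, and it is far from clear that it does.) The paper sidesteps this entirely: instead of differentiating $V(\delta,\cdot)$, it bounds the \emph{well-defined} quantity $\nabla_a V(0,a^\opt_\delta)$ from above and below, using only the \emph{one-sided} directional derivatives of $V(\delta,\cdot)$ at $a^\opt_\delta$ (Lemma~\ref{lem:grad_hedge_new}), which exist and encode optimality via $\sup_{\nu\in\Boptim_\delta}\int\nabla_{a_i}f\,d\nu\geq 0\geq\inf_{\nu\in\Boptim_\delta}\int\nabla_{a_i}f\,d\nu$, and then inverts $\nabla_a V(0,\cdot)$.

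Second, even granting a uniform expansion $V(\delta,a)=V(0,a)+\delta J(a)+o(\delta)$, your step ``differentiation under the integral sign to upgrade to the uniform gradient expansion $\nabla_a V(\delta,a)=\nabla_a V(0,a)+\delta\nabla_a J(a)+o(\delta)$'' does not follow. Uniform convergence of a family of functions to a limit does not imply convergence (let alone the same-rate expansion) of their gradients; one needs either a separate argument controlling $\nabla_a V(\delta,\cdot)$ directly, or convexity together with a subdifferential argument, or uniform $C^{1,\alpha}$ bounds on the error. None of these is supplied, and the first point above suggests $\nabla_a V(\delta,\cdot)$ may not even exist. The paper's Lagrangian/minimax calculation of $\lim_{\delta\to 0}\delta^{-1}\nabla_{a_i}V(0,a^\opt_\delta)$, finished off by Lemma~\ref{lem:inf} to evaluate an $\inf_\lambda$ explicitly, is precisely the workaround that makes everything rigorous without ever differentiating $V(\delta,\cdot)$. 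If you want to pursue the implicit-function-theorem route you sketch, you would at minimum need to prove that the two one-sided directional derivatives of $V(\delta,\cdot)$ at $a^\opt_\delta$ coincide under the stated hypotheses and then obtain an expansion of that common value, which is essentially what the paper's argument delivers in a different packaging.
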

Above and throughout the convention is that $\nabla_x f(x,a)\in \R^{d\times 1},$ $\nabla_{a_i}\nabla_x f(x,a)\in \R^{d\times 1}$, $\nabla_a f(x,a)\in \R^{k\times 1}$, $\nabla_x \nabla_a f(x,a)\in \R^{k\times d}$ and   $0/0=0$. The assumed existence and convergence of optimizers holds, e.g., with suitable convexity of $f$ in $a$, see \cite[Lemma \ref{lem:optimal.strategies.converge}]{SI} for a worked out setting. In line with the financial economics practice, we gave our sensitivities letter symbols, $\Upsilon$ and $\beth$, loosely motivated by $\Upsilon \pi \acute{o} \delta \epsilon \iota \gamma \mu \alpha$, the Greek for \emph{Model}, and $\he\resh\qof\bet$, the Hebrew for \emph{control}.

\section{Applications}\label{sec:applications}
We now illustrate the universality of Theorems \ref{thm:main} and \ref{thm:sens} by considering their applications in a number of different fields. 
Unless otherwise stated, $\cS=\R^d$, $\cA=\R^k$ and $\int$ means $\int_\cS$. 

\subsection{Financial Economics} 
We start with the simple example of risk-neutral pricing of a call option written on an underlying asset $(S_t)_{t\leq T}$. Here, $T,K>0$ are the maturity and the strike respectively, $f(x,a)=(S_0x-K)^+$ and $\mu$ is the distribution of $S_T/S_0$. We set interest rates and dividends to zero for simplicity. In the \cite{BlackScholes:73} model, $\mu$ is a log-normal distribution, i.e., $\log(S_T/S_0)\sim \mathcal{N}(-\sigma^2T/2,\sigma^2 T)$ is Gaussian with mean $-\sigma^2T/2$ and variance $\sigma^2T$. In this case, $V(0)$ is given by the celebrated Black-Scholes formula. Note that this example is particularly simple since $f$ is independent of $a$. However, to ensure risk-neutral pricing, we have to impose a linear constraint on the measures in $B_\delta(\mu)$, giving 
                \begin{equation}\label{eq:robustcallprice}
                    \sup_{\nu \in B_\delta(\mu)\colon \int_{}^{} x\nu(dx) = 1} \int_{}^{} (S_0x-K)^+ \nu(dx).
                \end{equation}
To compute its sensitivity we encode the constraint using a Lagrangian and apply Theorem \ref{thm:main}, see \cite[Rem. \ref{rk:abscont}, Thm.~\ref{thm.constraints}]{SI}. For $p=2$, letting $k=K/S_0$ and $\mu_k=\mu([k,\infty))$, the resulting formula, see \cite[Example \ref{ex:martingale}]{SI}, is given by
\begin{align*}
\Upsilon &= S_0 \sqrt{\int \Big(\mathbf{1}_{x\geq k}-\mu_k\Big)^2\mu(dx)}=S_0\sqrt{\mu_k(1-\mu_k)}.
\end{align*}
Let us specialise to the log-normal distribution of the Black-Scholes model above and denote the quantity in \eqref{eq:robustcallprice} as $\mathcal{R}BS(\delta)$. It may be computed exactly using methods from \cite{Bartl:2019hk} and Figure \ref{fig:BS} compares the exact value and the first order approximation. 
\begin{figure}
\centering
\includegraphics[width=.8\linewidth]{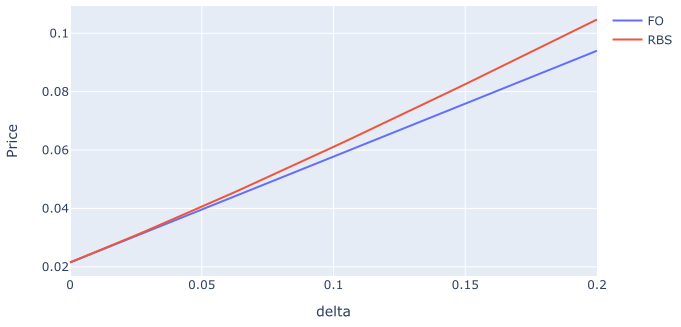}
\caption{DRO value $\mathcal{R}$BS$(\delta)$ vs the first order (FO) approximation $\mathcal{R}$BS$(0)+\Upsilon \delta$, $S_0=T=1$, $K=1.2$, $\sigma=0.2$.}
\label{fig:BS}
\end{figure}
We have $\Upsilon =S_0\sqrt{\Phi(d_-)(1-\Phi(d_-))}$, where $d_-=\frac{\log(S_0/K)-\sigma^2T/2}{\sigma\sqrt{T}}$ and $\Phi$ is the cdf of $\mathcal{N}(0,1)$ distribution. It is also insightful to compare $\Upsilon$ with a parametric sensitivity. If instead of Wasserstein balls, we consider $\{\mathcal{N}(-\tilde \sigma^2T/2,\tilde\sigma^2 T): |\sigma-\tilde\sigma|\leq \delta\}$ the resulting sensitivity is known as the Black-Scholes Vega and given by $\mathcal{V}=S_0\Phi'(d_-+\sigma\sqrt{T})$. We plot the two sensitivities in Figure \ref{fig:BSsens}. 
It is remarkable how, for the range of strikes of interest, the non-parametric model sensitivity $\Upsilon$ traces out the usual shape of $\mathcal{V}$ but shifted upwards to account for the idiosyncratic risk of departure from the log-normal family. More generally, given a book of options with payoff $f=f^+-f^-$ at time $T$, with $f^+,f^-\geq 0$, we could say that the book is $\Upsilon$-neutral if the sensitivity $\Upsilon$ was the same for $f^+$ and for $f^-$. In analogy to Delta-Vega hedging standard, one could develop a non-parametric model-agnostic Delta-Upsilon hedging. We believe these ideas offer potential for exciting industrial applications and we leave them to further research.
\begin{figure}
\centering
\includegraphics[width=.8\linewidth]{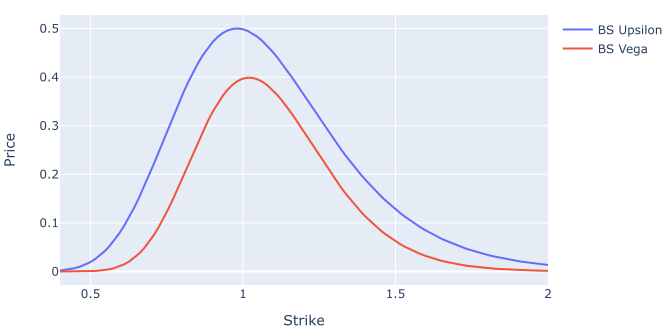}
\caption{Black-Scholes model: $\Upsilon$ vs $\mathcal{V}$, $S_0=T=1$, $\sigma=0.2$.}
\label{fig:BSsens}
\end{figure}

We turn now to the classical notion of optimized Certainty Equivalent (OCE) of \cite{BenTal:jh}. It is a decision theoretic criterion designed to split a liability between today and tomorrow's payments. It is also a convex risk measure in the sense of \cite{ADEH:99} and covers many of the popular risk measures such as expected shortfall or entropic risk, see \cite{BenTal:2007fo}. We fix a convex monotone function $l\colon\mathbb{R}\to\mathbb{R}$ which is bounded from below and $g\colon\mathbb{R}^d\to\mathbb{R}$. Here $g$ represent the payoff of a financial position and $l$ is the negative of a utility function, or a loss function. 
We take $\| \cdot\|=|\cdot|$ and refer to \cite[Lemma \ref{lem:optimal.strategies.converge}]{SI} for generic sufficient conditions for Assumptions \ref{ass:main} and \ref{ass:sens} to hold in this setup. The OCE corresponds to $V$ in \eqref{eq:stochoptim} for $f(x,a)=l(g(x)-a)+a$ and $\cA=\R$, $\cS=\R^d$. Theorems \ref{thm:main} and \ref{thm:sens} yield the sensitivities
\begin{align*}
&\Upsilon = \inf_{a^\opt\in \Aoptim{0}}\left( \int \big| l'\big( g(x)-a^\opt\big) \nabla g(x) \big|^q\,\mu(dx)\right)^{1/q},  \\
&\beth = \Big(\int |l'(g(x)-a^\opt)\,\nabla g(x)|^2\,\mu(dz)\Big)^{-1/2} \\
&\qquad\qquad \cdot \frac{\int l''(g(x)-a^\opt)\,l'(g(x)-a^\opt)\,(\nabla g(x))^2 \, \mu(dx)}{\int l''(g(x)-a^\opt)\,\mu(dx)},
\end{align*}
where, for simplicity, we took $p=q=2$ for the latter.
\\ A related problem considers hedging strategies which minimise the expected loss of the hedged position, i.e., $f(x,a)=l\left( g(x)+\langle a,x-x_0\rangle \right)$, where $\cA=\R^k$ and $(x_0,x)$ represent today and tomorrow's traded prices. We compute $\Upsilon$ as 
 \begin{align*}
 \inf_{a^\opt\in \Aoptim{0}} \left( \int \big| l'\big( g(x)+\langle a^\opt,x-x_0\rangle \big) (\nabla  g(x)+a^\opt) \big|^q\,\mu(dx)\right)^{1/q}.
    \end{align*}
Further, we can combine loss minimization with OCE and consider $a=(H,m)\in \R^k\times \R$, $f(x,(h,m))=l ( g(x)+\langle H,x-x_0\rangle+m)-m$. This gives $V'(0)$ as the infimum over $(H^\opt,m^\opt)\in \Aoptim{0}$ of 
$$ \left( \int \big| l'\big( g(x)+\langle H^\opt, x-x_0 \rangle +m^\opt\big) \left( \nabla g(x) +H^\opt\right)\big|^q\,\mu(dx)\right)^{1/q}.$$
The above formulae capture non-parametric sensitivity to model uncertainty for examples of key risk measurements in financial economics. 
To the best of our knowledge this has not been achieved before. 

Finally, we consider briefly the classical mean-variance optimization of \cite{Markowitz:52}. Here $\mu$ represents the loss distribution across the assets and $a\in \R^d$, $\sum_{i=1}^d a_i=1$ are the relative investment weights. The original problem is to minimise the sum of the expectation and $\gamma$ standard deviations of returns $\langle a, X\rangle$, with $X\sim \mu$. Using the ideas in \cite[Example 2]{PPW:12} and considering measures on $\R^d\times \R^d$, we can recast the problem as \eqref{eq:stochoptim}. Whilst \cite{PPW:12} focused on the asymptotic regime $\delta \to \infty$, their non-asymptotic statements are related to our Theorem \ref{thm:main} and either result could be used here to obtain that $V(\delta)\approx V(0)+\sqrt{1-\gamma^2}\delta$. 

\subsection{Neural Networks} We specialise now to quantifying robustness of neural networks (NN) to adversarial examples. This has been an important topic in machine learning since \cite{szegedy2013intriguing} observed that NN consistently misclassify inputs formed by applying small worst-case perturbations to a data set. This produced a number of works offering either explanations for these effects or algorithms to create such adversarial examples, e.g. \cite{goodfellow2014explaining,li2019robustra, carlini2017towards, wong2017provable,weng2018evaluating,araujo2019robust, mangal2019robustness} to name just a few. The main focus of research works in this area, see \cite{bastani2016measuring}, has been on faster algorithms for finding adversarial examples, typically leading to an overfit to these examples without any significant generalisation properties. The viewpoint has been mainly pointwise, e.g., \cite{szegedy2013intriguing}, with some generalisations to probabilistic robustness, e.g., \cite{mangal2019robustness}.

In contrast, we propose a simple metric for measuring robustness of neural networks which is independent of the architecture employed and the algorithms for identifying adversarial examples. In fact, Theorem \ref{thm:main} offers a simple and intuitive way to formalise robustness of neural networks: for simplicity consider a $1$-layer neural network trained on a given distribution $\mu$ of pairs $(x,y)$, i.e. $(A^\opt_1, A_2^\opt, b_1^\opt, b_2^\opt)$ solve
\begin{align*}
\inf \int &|y-\left((A_2(\cdot)+b_2)\circ \sigma \circ (A_1(\cdot)+b_1)\right)(x)|^p\,\mu(dx, dy),
\end{align*}
where the $\inf$ is taken over $a=(A_1,A_2,b_1,b_2)\in \cA= \R^{k\times d}\times \mathbb{R}^{d\times k}\times \R^k\times\R^d$, 
for a given activation function $\sigma:\R\to \R$, where the composition above is understood componentwise.
Set $f(x,y; A, b):= |y-(A_2(\cdot)+b_2)\circ \sigma \circ (A_1(\cdot)+b_1)(x)|^p$. Data perturbations are captured by $\nu\in B_{\delta}^p(\mu)$ and \eqref{eq. minmax} offers a robust training procedure. 
The first order quantification of the NN sensitivity to adversarial data is then given by
\begin{align*}
\left(\int |\nabla f(x,y; A^\opt, b^\opt)|^q\, \mu(dx,dy)\right)^{1/q}.
\end{align*}
A similar viewpoint, capturing robustness to adversarial examples through optimal transport lens, has been recently adopted by other authors. The dual formulation of \eqref{eq. minmax} was used by  \cite{shafieezadeh2019regularization} to reduce the training of neural networks to tractable linear programs.  \cite{sinha2017certifying} modified \eqref{eq. minmax} to consider a penalised problem $\inf_{a\in\cA} \sup_{\nu \in \cP(\cS)}\int_{\mathcal{S}} f\left( x,a \right)\,\nu(dx)- \gamma W_p(\mu,\nu)$ to propose new stochastic gradient descent algorithms with inbuilt robustness to adversarial data. 

\subsection{Uncertainty Quantification} 
In the context of UQ the measure $\mu$ represents input parameters of a (possibly complicated) operation $G$ in a physical, engineering or economic system. We consider the so-called \textit{reliability} or \emph{certification problem}: for a given set $E$ of undesirable outcomes, one wants to control $\sup_{\nu \in \cP} \nu(G(x)\in E)$, for a set of probability measures $\cP$. The distributionally robust adversarial classification problem considered recently by \cite{ho2020adversarial} is also of this form, with Wasserstein balls $\cP$ around an empirical measure of $N$ samples. Using the dual formulation of \cite{blanchet2019quantifying}, they linked the problem to minimization of the conditional value-at-risk and proposed a reformulation, and numerical methods, in the case of linear classification. We propose instead a regularised version of the problem and look for 
\begin{align*}
\delta(\alpha):=\sup\left\{ \delta\ge 0:\  \inf_{\nu \in B_{\delta}(\mu)} \int d(G(x),E)\, \nu(dx) \ge \alpha\right\}
\end{align*}
for a given safety level $\alpha$. 
We thus consider the average distance to the undesirable set, $d(G(x),E):=\inf_{e\in E}|G(x)-e|$, and not just its probability. The quantity $\delta(\alpha)$ could then be used to quantify the implicit uncertainty of the certification problem, where higher $\delta$ corresponds to less uncertainty. Taking statistical confidence bounds of the empirical measure in Wasserstein distance into account, see \cite{Fournier:2014kk}, $\delta$ would then determine the minimum number of samples needed to estimate the empirical measure. 

Assume that $E$ is convex.
Then $x\mapsto d(x,E)$ differentiable everywhere except at the boundary of $E$ with $\nabla_x d(x,E)=0$ for $x\in \interior{E}$ and $|\nabla_x d(x,E)|=1$ for all $x\in \bar{E}^c$. Further, assume $\mu$ is absolutely continuous w.r.t.\ Lebesgue measure on $\cS$. Theorem \ref{thm:main}, using \cite[Remark \ref{rk:abscont}]{SI}, gives a first-order expansion for the above problem
\begin{align*}
&\inf_{\nu\in B_{\delta}(\mu)} \int  d(G(x),E) \, \nu(dx)= \int  d(G(x),E) \, \mu(dx)\\
&\qquad -\left(\int |\nabla_x d(G(x),E)\nabla_x G(x) |^q\,\mu(dx)\right)^{1/q} \delta +o(\delta).
\end{align*}
In the special case $\nabla_x G(x)=cI$ this simplifies to 
\begin{align*}
\int  d(G(x),E) \, \mu(dx)- c\left( \mu(G(x) \notin E)\right)^{1/q}\delta+o(\delta)
\end{align*}
and the minimal measure $\nu$ pushes every point $G(x)$ not contained in $E$ in the direction of the orthogonal projection. This recovers the intuition of \cite[Theorem 1]{chen2018data}, which in turn rely on \cite[Corollary 2, Example 7]{gao2016distributionally}. Note however that our result holds for general measures $\mu$. We also note that such an approximation could provide an ansatz for dimension reduction, by identifying the dimensions for which the partial derivatives are negligible and then projecting $G$ on to the corresponding lower-dimensional subspace (thus providing a simpler surrogate for $G$). This would be an alternative to a basis expansion (e.g., orthogonal polynomials) used in UQ and would exploit the interplay of properties of $G$ and $\mu$ simultaneously. 

\subsection{Statistics} We discuss two applications of our results in the realm of statistics. 
We start by highlighting the link between our results and the so-called \emph{influence curves} (IC) in robust statistics.
 For a functional $\mu\mapsto T(\mu)$ its IC is defined as
\begin{align*}
\mathrm{IC}(y)=\lim_{t\to 0} \frac{T(t\delta_y +(1-t)\mu)-T(\mu)}{t}.
\end{align*}
Computing the $\mathrm{IC}$, if it exists, is in general hard and closed form solutions may be unachievable. However, for the so-called M-estimators, defined as optimizers for $V(0)$, 
\begin{align*}
T(\mu):=\mathrm{argmin}_a \int f(x,a)\mu(dx), 
\end{align*}
for some $f$ (e.g., $f(x,a)=|x-a|$ for the median), we have 
\begin{align*}
\mathrm{IC}(y)= \frac{\nabla_a f(y,T(\mu))}{-\int \nabla^2_{a}f(s,T(\mu))\,\mu(ds)},
\end{align*}
under suitable assumptions on $f$, see \cite[section 3.2.1]{huber1981robust}. In comparison, writing $T^\delta$ for the optimizer for $V(\delta)$, Theorem \ref{thm:sens} yields 
\begin{align}\label{eq:ex_robust}
\lim_{\delta \to 0}\frac{T^{\delta}-T(\mu)}{\delta}=  \frac{\int\nabla_{x}\nabla_a f(x,T(\mu)) \nabla_x f(x,T(\mu))\, \mu(dx)}{-\int \nabla^2_a f(s,T(\mu))\,\mu(ds)},
\end{align}
under Assumption \ref{ass:sens} and normalisation $\|\nabla_x f(x,T(\mu))\|_{L^p(\mu)}=1$. To investigate the connection let us Taylor-expand $\mathrm{IC}(y)$ around $x$ to obtain
\begin{align*}
\mathrm{IC}(y)- \mathrm{IC}(x)=& \frac{\nabla_{a}\nabla_x f(x, T(\mu))}{-\int \nabla^2_{a}f(s,T(\mu))\,\mu(ds)} (y-x).
\end{align*}
Choosing $y=x+\delta \nabla f_x(x,T(\mu))$ and integrating both sides over $\mu$ and dividing by $\delta$, we obtain the asymptotic equality
\begin{align*}
 \int \frac{\mathrm{IC}(x+\delta\nabla_x f(x, T(\mu)))- \mathrm{IC}(x)}{\delta} \, \mu(dx)\approx \frac{T^{\delta}-T(\mu)}{\delta}
\end{align*}
for $\delta \to 0$ by \eqref{eq:ex_robust}. We conclude that considering the average directional derivative of IC in the direction of $\nabla f_x(x,T(\mu))$ gives our first-order sensitivity. For an interesting conjecture regarding the comparison of influence functions and sensitivities in KL-divergence we refer to \cite[Section 7.3]{lam2018sensitivity} and \cite[Section 3.4.2]{lam2016robust}.

Our second application in statistics exploits the representation of the LASSO/Ridge regressions as robust versions the standard linear regression. We consider $\cA=\R^k$ and $\cS=\R^{k+1}$. If instead of the Euclidean metric we take $\|(x,y)\|_{\ast}=|x|_r\mathbf{1}_{\{y=0\}}+\infty \mathbf{1}_{\{y\neq 0\}}$, for some $r>1$ and $(x,y)\in \R^{k}\times \R$, in the definition of the Wasserstein distance, then \cite{blanchet2019robust} showed that 
\begin{equation}
\label{eq:lasso}
\begin{split}
&\inf_{a\in \R^k}\sup_{\nu \in B_{\delta}\left( \mu \right)}\int (y-\langle x, a \rangle )^2\,\nu(dx,dy)\\
&\qquad = \inf_{a\in \R^k} \left(\sqrt{\int (y-\langle a,x \rangle)^2\,\mu(dx,dy)} +\delta |a|_s \right)^{2}
\end{split}
\end{equation}
holds, where $1/r+1/s=1$. The $\delta=0$ case is the ordinary least squares regression. For $\delta>0$, the RHS for $s=2$ is directly related to the Ridge regression, while the limiting case $s=1$ is called the square-root LASSO regression, a regularised variant of linear regression well known for its good empirical performance. Closed-form solutions to \eqref{eq:lasso} do not exist in general and it is a common practice to use numerical routines to solve it  approximately. Theorem \ref{thm:sens} offers instead an explicit first-order approximation of $a^\opt_\delta$ for small $\delta$. We denote by $a^\opt$ the ordinary least squares estimator and by $I$ the $k\times k$ identity matrix. Note that the first order condition on $a^\opt$ implies that $\int (y-\langle a^\opt,x\rangle )x_i\mu(dx,dy)=0$ for all $1\leq i\leq k$. In particular, $V(0)=\int (y^2-\langle a^\opt, x\rangle y)\mu(dx,dy)$ and $a^\opt = D^{-1}\int yx \mu(dx,dy)$, where we assume the system is overdetermined so that $D=\int xx^T \, \mu(dx,dy)$ is invertible. Letting $J=a^\opt x^T+(Ia^\opt)(Ix)$ a direct computation, see \cite{SI}, yields
\begin{align}
a^\opt_{\delta}\approx\ a^\opt-  \sqrt{V(0)} D^{-1}\, h(a^\opt) \delta. \label{eq:lasso2}
\end{align}
For $s=2$, $h(a^\opt)=a^\opt/|a^\opt|_2$ and for $s=1$, $h(a^\opt) = \text{sign}(a^\opt)$ and hence\footnote{The case $s=1$, inspecting the proof, we see that Theorem \ref{thm:sens} still holds since $a^\opt$ does not have zero components $\mu$-a.s., which are the only points of discontinuity of $h$.} $a^\opt_\delta$ is approximately 
\begin{equation}\label{eq:lasso_explicit}
\left(1-\frac{\sqrt{V(0)}}{|a^\opt |_2}D^{-1}\delta\right)a^\opt\text{ and }a^\opt-\sqrt{V(0)}D^{-1}\text{sign}(a^\opt)\delta 
\end{equation} 
respectively. This corresponds to parameter shrinkage: proportional for square-root Ridge and a shift towards zero for square-root LASSO. To the best of our knowledge these are first such results and we stress that our formulae are valid in a general context and, in particular, parameter shrinkage depends on the direction through the $D^{-1}$ factor. Figure \ref{fig:LASSO} compares the first order approximation with the actual results and shows a remarkable fit. 
\begin{figure}
\centering
\includegraphics[width=.8\linewidth]{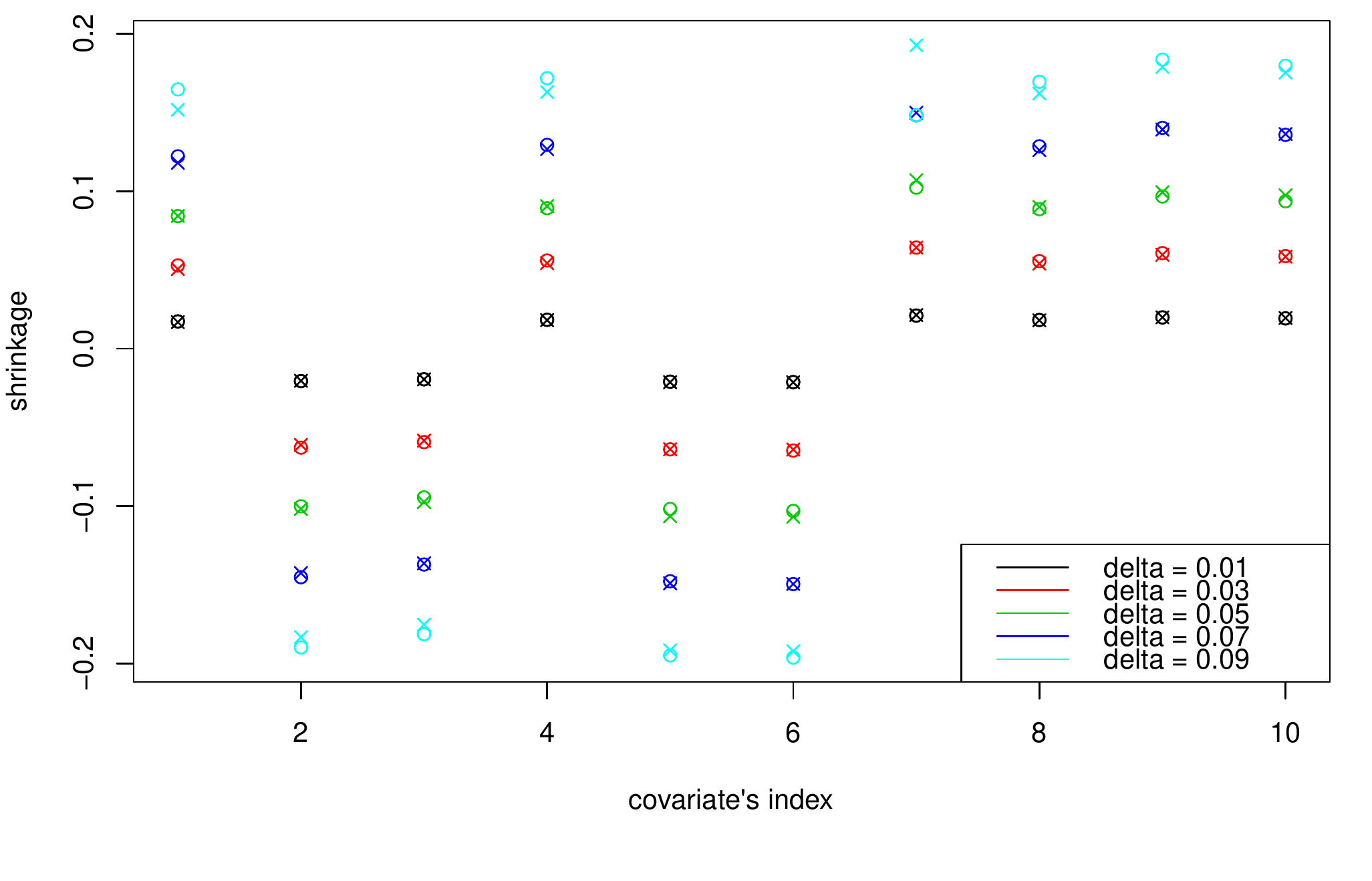}
\caption{Square-root LASSO parameter shrinkage $a^\opt_\delta-a^\opt_0$: exact (o) and the first-order approximation (x) in \eqref{eq:lasso_explicit}. $2000$ observations generated according to $Y=1.5 X_1 -3X_2 - 2X_3 + 0.3X_4- 0.5X_5-0.7X_6+0.2X_7+0.5X_8+1.2X_9+0.8 X_{10}+\epsilon$ with all $X_i,\epsilon$ i.i.d.\ $\mathcal{N}(0,1)$.}
\label{fig:LASSO}
\end{figure}
Furthermore, our results agree with what is known in the canonical test case for the (standard) Ridge and LASSO, see \cite{Tobshirani:96}, when $\mu=\mu_N$ is the empirical measure of $N$ i.i.d.\ observations, the data is centred and the covariates are orthogonal, i.e., $D=\frac{1}{N}I$. In that case, \eqref{eq:lasso_explicit} simplifies to
$$
\left(1 - \delta\sqrt{N \left(\frac{1}{R^2}-1\right)}\right)a^\opt\text{ and }a^\opt-\sqrt{N}\, |y|\,\sqrt{1-R^2}\,\text{sign}(a^\opt) \delta,
$$
where $R^2$ is the usual coefficient of determination.

The case of $\mu_N$ is naturally of particular importance in statistics and data science and we continue to consider it in the next subsection. In particular, we characterise the asymptotic distribution of $\sqrt{N}(a^\opt_{1/\sqrt{N}}-a^\opt)$, where $a^\opt_\delta\in \Aoptim{\delta}(\mu_N)$ and $a^\opt\in \Aoptim{0}(\mu_\infty)$ is the optimizer of the non-robust problem for the data-generating measure. This recovers the central limit theorem of \cite{Blanchet:2019ue}, a link we explain further in  section \ref{sec:lit}\ref{sec:CLT}.

\subsection{Out-of-sample error}\label{sec:outofsample}
A benchmark of paramount importance in optimization is the so-called \emph{out-of-sample error}, also known as the \emph{prediction error} in statistical learning.  
Consider the setup above when $\mu_N$ is the empirical measure of $N$ i.i.d.\ observations sampled from the ``true" distribution $\mu=\mu_\infty$ and take, for simplicity, $\|\cdot \|=|\cdot |_s$, with $s>1$. Our aim is to compute the optimal $a^\opt$ which solves the original problem \eqref{eq:stochoptim}. However, we only have access to the training set, encoded via $\mu_N$. Suppose we solve the distributionally robust optimization problem \eqref{eq. minmax} for $\mu_N$ and denote the robust optimizer $a^{\opt,N}_\delta$. Then the \emph{out-of-sample error} 
\begin{align*}
V(0,a^{\opt,N}_\delta)-V(0,a^\opt)=\int f(x,a^{\opt,N}_\delta)\,\mu(dx)-\int f(x,a^{\opt})\,\mu(dx)
\end{align*}
quantifies the error from using $a^{\opt,N}_\delta$ as opposed to the true optimizer $a^\opt$.

While this expression seems to be hard to compute explicitly for finite samples, Theorem \ref{thm:sens} offers a way to find the asymptotic distribution of a (suitably rescaled version of) the out-of-sample error. We suppose the assumptions in Theorem \ref{thm:sens} are satisfied and note that the first order condition for $a^\opt$ gives $\nabla_a V(0,a^\opt)=0$. Then, a second-order Taylor expansion gives
\begin{align}\label{eq:taylor}
V(0,a^{\opt,N}_\delta)-V(0,a^\opt) = \frac{1}{2} (a^{\opt,N}_\delta-a^{\opt})^T\nabla_a^2 V(0,\tilde a)(a^{\opt,N}_\delta-a^{\opt}),
\end{align}
for some $\tilde a$, (coordinate-wise) between $a^\opt$ and $a^{\opt,N}_\delta$. Now we write
\begin{align*}
a^{\opt,N}_\delta-a^{\opt}&= a^{\opt,N}_\delta-a^{\opt,N}+a^{\opt,N}-a^{\opt},
\end{align*}
where we define $a^{\opt,N}$ as the optimizer of the non-robust problem \eqref{eq:stochoptim} with $\mu$ replaced by $\mu_N$. In particular the $\delta$-method for M-estimators implies that
\begin{align}\label{eq:M-est asymptotics}
\sqrt{N}(a^{\opt,N}-a^{\opt}) \Rightarrow (\nabla_a^2 V(0, a^{\opt}))^{-1} H,
\end{align}
where $H\sim \mathcal{N}\left(0, \int (\nabla_a f(x,a^{\opt}))^T\nabla_a f(x,a^{\opt})\,\mu(dx)\right)$ and $\Rightarrow$ denotes the convergence in distribution. On the other hand, for a fixed $N\in \N$, Theorem \ref{thm:sens} applied to $\mu_N$ yields 
\begin{align}
a^{\opt,N}_\delta-a^{\opt,N} =&-\Big(\int |\nabla_x f(x,a^{\opt,N})|_s^q\,\mu_N(dx)\Big)^{\frac{1}{q}-1}\cdot \left(\int \nabla_a^2 f(x, a^{\opt,N})\,\mu_N(dx)\right)^{-1} \label{eq:Nsamples opt diff fin}\\
&\quad\ \cdot \int \frac{\nabla_{x}\nabla_a f(x,a^{\opt,N})\, h(\nabla_x f(x,a^{\opt,N}))}{|\nabla_x f(x,a^{\opt,N})|_s^{1-q}} \, \mu_N(dx)\cdot \delta+o(\delta)\nonumber \\
  =& -\left( (\nabla_a^2 V(0, a^{\opt}))^{-1}\Theta +\Delta_N \right)\cdot \delta+o(\delta),\label{eq:Nsamples opt diff}
\end{align}
where 
\begin{align*}
\Theta & := \Big(\int |\nabla_x f(x,a^\opt)|^q_s\,\mu(dx)\Big)^{\frac{1}{q}-1}\cdot 
   \int  \frac{\nabla_{x}\nabla_a f(x,a^\opt)\, h(\nabla_x f(x,a^\opt))}{|\nabla_x f(x,a^\opt)|_s^{1-q}} \, \mu(dx),\\
\Delta_N &:=\Big(\int |\nabla_x f(x,a^{\opt,N})|_s^q\,\mu_N(dx)\Big)^{\frac{1}{q}-1} \cdot \left(\int \nabla_a^2 f(x, a^{\opt,N})\,\mu_N(dx)\right)^{-1} \\
&\quad \cdot \int \frac{\nabla_{x}\nabla_a f(x,a^{\opt,N})\, h(\nabla_x f(x,a^{\opt,N}))}{|\nabla_x f(x,a^{\opt,N})|_s^{1-q}} \, \mu_N(dx)-(\nabla_a^2 V(0, a^{\opt}))^{-1}\Theta.
\end{align*}
Almost surely (w.r.t.\ sampling of $\mu_N$), we know that $\mu_N\to \mu$ in $W_p$ as $N\to \infty$, see \cite{Fournier:2014kk}, and under the regularity and growth assumptions on $f$ in \cite[eq.\ \eqref{eq:condit}]{SI} we check that $\Delta_N\to 0$ a.s., see \cite[Example \ref{ex:OutOfSample}]{SI} for details. 
In particular, taking $\delta=1/\sqrt{N}$ and combining the above with \eqref{eq:M-est asymptotics} we obtain
\begin{equation}\label{eq:CLTfromOSE}
 \sqrt{N} \left(a^{\opt,N}_{1/\sqrt{N}}-a^\opt \right) \Rightarrow (\nabla_a^2 V(0, a^\opt))^{-1}(H-\Theta).
\end{equation}
This recovers the central limit theorem of \cite{Blanchet:2019ue}, as discussed in more detail in section \ref{sec:lit}\ref{sec:CLT} below.
Together, \eqref{eq:taylor} and \eqref{eq:Nsamples opt diff} give us the a.s.\ asymptotic behaviour of the out-of-sample error
\begin{align}\label{eq:out-of-sample-error}
V(0,a^{\opt,N}_\delta)-V(0,a^\opt)= \frac{1}{2N}(H-\Theta)^T(\nabla_a^2 V(0, a^{\opt}))^{-1} (H-\Theta)+o\left(\frac{1}{N}\right).
\end{align}
These results also extend and complement \cite[Prop. 17]{anderson2019improving}. \cite{anderson2019improving} investigate when the distributionally robust optimizers $a^{\opt,N}_\delta$ yield, on average, better performance than the simple in-sample optimizer $a^{\opt, N}$. To this end, they consider the expectation, over the realisations of the empirical measure $\mu_N$ of 
\begin{align*}
V(0,a^{\opt,N}_\delta)-V(0,a^{\opt,N})= \int f(x,a^{\opt,N}_\delta)\,\mu(dx)-\int f(x,a^{\opt,N})\,\mu(dx).
\end{align*}
This is closely related to the out-of-sample error and our derivations above can be easily modified. The first order term in the Taylor expansion no longer vanishes and, instead of \eqref{eq:taylor}, we now have
\begin{align*}
V(0,a^{\opt,N}_\delta)-V(0,a^{\opt,N}) = \nabla_a V(0,a^{\opt,N})(a^{\opt,N}_\delta-a^{\opt,N})+o(|a^{\opt,N}_\delta-a^{\opt,N}|),
\end{align*}
which holds, e.g., if for any $r>0$, there exists $c>0$ such that $\sum_{i=1}^k\Big| \nabla_{a}\nabla_{a_i} f(x,a)\Big| \le c(1+|x|^{p})$ for all $x\in \cS$, $|a|\leq r$. Combined with \eqref{eq:Nsamples opt diff fin}, this gives asymptotics in small $\delta$ for a fixed $N$. For quadratic $f$ and taking $q\uparrow \infty$, we recover the result in \cite[Prop.\ 17]{anderson2019improving}, see  \cite[Example \ref{ex:OutOfSample}]{SI} for details.

\section{Further discussion and literature review}\label{sec:lit}

We start with an overview of related literature and then focus specifically on a comparison of our results with the CLT of \cite{Blanchet:2019ue} mentioned above.

\subsection{Discussion of related literature}
Let us first remark, that while Theorem 2 offers some superficial similarities to a classical maximum theorem, which is usually concerned with continuity properties of $\delta\mapsto V(\delta)$, in this work we are instead interested in the exact first derivative of the function $\delta\mapsto V(\delta)$. Indeed the convergence $\lim_{\delta \to 0} \sup_{\nu \in B_\delta(\mu)} \int f(x)\,\nu(dx)=\int f(x)\,\mu(dx)$ follows for all $f$ satisfying $f(x)\le c(1+|x|^p)$ directly from the definition of convergence in Wasserstein metric (see e.g. \cite[Def. 6.8]{villani2008optimal}). In conclusion the main issue is to quantify the rate of this convergence by calculating the first derivative $V'(\delta)$.

Our work investigates model uncertainty broadly conceived: it includes errors related to the choice of models from a particular (parametric or not) class of models as well as the mis-specification of such class altogether (or indeed, its absence). In decision theoretic literature, these aspects are sometimes referred to as model ambiguity and model mis-specification respectively, see \cite{HansenMarinacci:16}. However, seeing our main problem \eqref{eq. minmax} in decision theoretic terms is not necessarily helpful as we think of $f$ as given and not coming from some latent expected utility type of problem. In particular, our actions $a\in \cA$ are just constants. 

In our work we decided to capture the uncertainty in the specification of $\mu$ using neighborhoods in the Wassertein distance. As already mentioned, other choices are possible and have been used in past. Possibly, the most often used alternative is the relative entropy, or the Kullblack-Leibler divergence. In particular, it has been used in this context in economics, see \cite{Hansen:tu}. To the best of our knowledge, the only comparable study of sensitivities with respect to relative entropy balls is \cite{lam2016robust}, see also \cite{lam2018sensitivity} allowing for additional marginal constraints. However, this only considered the specific case $f(x,a)=f(x)$ where the reward function is independent of the action. Its main result is
\begin{align*}
\sup_{\nu \in B^{KL}_{\delta}(\mu)} \int f(x)\, \nu(dx) = \int f(x)\, \mu(dx) +\sqrt{2 \operatorname{Var}_{\mu}(f(X))} \delta+\frac{1}{3} \frac{\kappa_{3}(f(X))}{\operatorname{Var}_{\mu}(f(X))} \delta^2+O\left(\delta^{3}\right),
\end{align*}
where $ B^{KL}_{\delta}(\mu)$ is a ball of radius $\delta^2$ centred around $\mu$ in KL-divergence, $\operatorname{Var}_{\mu}(f(X))$ and $\kappa_{3}(f(X))$ denote the variance and kurtosis of $f$ under the measure $\mu$ respectively. In particular, the first order sensitivity involves the function $f$ itself. In contrast, our Theorem 2 states $V'(\delta)=(\int (f^\prime(x))^2\,\mu(dx))^{1/2}$ and involves the first derivative $f'$. In the trivial case of a point mass $\mu=\delta_x$ we recover the intuitive sensitivity $V'(\delta)=|f^\prime(x)|$, while the results of \cite{lam2016robust} do not apply for this case. We also note that \cite{lam2016robust} requires exponential moments of the function $f$ under the baseline measure $\mu$, while we only require polynomial moments. In particular in applications in econometrics (or any field in which $\mu$ typically has fat tails), the scope of application of the corresponding results might then be decisively different. We remark however, that this requirement can be substantially weakened (to the existence of polynomial moments) when replacing KL-divergences by $\alpha$-divergences, see e.g. \cite{atar2015robust,glasserman2014robust}. We expect a sensitivity analysis similar to \cite{lam2016robust} to hold in this setting. However, to the best of our knowledge no explicit results seem to available in the literature. 
\\  
To understand the relative technical difficulties and merits it is insightful to go into the details of the statements. In fact, in the case of relative entropy and the one-period setup we are considering, the exact form of the optimizing density can be determined exactly (see \cite[Proposition 3.1]{lam2016robust}) up to a one-dimensional Langrange parameter. This is well known and is the reason behind the usual elegant formulae obtained in this context. But this then reduces the problem in \cite{lam2016robust}  to a one-dimensional problem, which can be well-approximated via a Taylor approximation. In contrast, when we consider balls in the Wasserstein distance, the form of the optimizing measure is not known (apart from some degenerate cases). In fact a key insight of our results is that the optimizing measure can be approximated by a deterministic shift in the direction $(x+f^\prime(x)\delta)_*\mu$ (this is, in general, not exact but only true as a first order approximation). The reason for these contrastive starting points of the analyses is the fact that Wasserstein balls contain a more heterogeneous set of measures, while in the case of relative entropy, exponentiating $f$ will always do the trick. We remark however that this is not true for the finite-horizon problems considered in \cite[Section 3.2]{lam2016robust} any more, where the worst-case measure is found using an elaborate fixed-point equation.\\
A point which further emphasizes the fact that the topology introduced by the Wasserstein metric is less tractable is the fact that
\begin{align*}
W^p_p(\mu, \nu)=\lim_{\epsilon\to 0} \inf_{\pi\in \Pi(\mu, \nu)} \int |x-y|^p\,\pi(dx,dy)+\epsilon H(\pi \mid \mu \otimes \nu)=\lim_{\epsilon\to 0}\epsilon \inf_{\pi\in \Pi(\mu, \nu)} H(\pi \mid R^\epsilon),
\end{align*}
where $H(\pi \mid R^\epsilon) = \int \log\left( \frac{d\pi}{dR^\epsilon}\right)\, d\pi$ is the relative entropy and $$dR^\epsilon= c_0 \exp(-|x-y|^p/\epsilon)d (\mu \otimes \nu)$$ for some normalising constant $c_0>0$, see, e.g., \cite{carlier2017convergence}. This is known as the entropic optimal transport formulation and has received considerable interest in the ML community in the past years (see e.g. \cite{peyre2019computational}).  In particular, the Wasserstein distance can be approximated by relative entropy, but only with respect to reference measures on the product space. As we consider optimization over $\nu$ above it amounts to changing the reference measure. In consequence the topological structure imposed by Wasserstein distances is more intricate compared to relative entropy, but also more flexible.

The other well studied distance is the Hellinger distance. \cite{lindsay1994efficiency} calculates influence curves for the minimum Hellinger distance estimator $a^{\mathrm{Hell},\opt}$ on a countable sample space. Their main result is that for the choice $f(x,a)=\log(\ell(x,a))$ (where $(\ell(x,a))_{a\in \mathcal{A}}$ is a collection of parametric densities)
\begin{align*}
IC(x)=-\left(\nabla_a^2V(0, a^{\mathrm{Hell},\opt})\right)^{-1}\nabla_a \log(\ell(x,a^{\mathrm{Hell},\opt})),
\end{align*}
the product of the inverse Fisher information matrix and the score function, which is the same as for the classical maximum likelihood estimator. Denote by $\mu_N$ the empirical measure of $N$ data samples and by $a^{\mathrm{Hell},\opt}(N)$ the corresponding minimum Hellinger distance estimator for $\mu_N$. In particular this result implies the same CLT as for M-estimators given by
\begin{align*}
N^{1/2} (a^{\mathrm{Hell},\opt}(N)-a^{\mathrm{Hell},\opt}) \Rightarrow (\nabla_a^2 V(0, a^\opt))^{-1}  H 
\end{align*}
where $H\sim \mathcal{N}\left(0, \int \nabla_a f(x,a^{\mathrm{Hell},\opt})^T \nabla_a f(x, a^{\mathrm{Hell},\opt})\, \mu(dx)\right).$
As we discuss in the next section, our Theorem \ref{thm:sens} yields a similar CLT, namely
\begin{align*}
N^{1/2} (a^{\opt,N}_{1/\sqrt{N}}-a^\opt) \Rightarrow (\nabla_a^2 V(0, a^\opt))^{-1} \cdot \left(H -\nabla_a\sqrt{\int |\nabla_x f(x, a^\opt)|_s^2\, \mu(dx)}\,\right).
\end{align*}
Thus the Wasserstein worst-case approach leads to a shift of the mean of the normal distribution in the direction 
$$-\nabla_a\sqrt{\int |\nabla_x f(x, a^\opt)|_s^2\, \mu(dx)}$$
compared to the non-robust case. In the simple case $\mu=\mathcal{N}(0,\sigma^2)$ with standard deviation $\sigma>0$ we obtain the MLE $\sigma^{\opt,N}=\frac{1}{N} \sum_{k=1}^N X_i^2$. We can directly compute (for $a=\sigma$) that
\begin{align*}
\nabla_\sigma \sqrt{\int \left|\nabla_x \left(\mathrm{const.}+\log\left(\exp\left(-\frac{x^2}{2(\sigma^\opt)^2}\right)\right)\right)\right|_s^2\, \mu(dx)}&=\nabla_\sigma \sqrt{\int \frac{x^2}{(\sigma^\opt)^4}\,\mu(dx)}\\
&=\nabla_\sigma\frac{\sigma^\opt}{(\sigma^\opt)^2}=\nabla_\sigma\frac{1}{\sigma^\opt}=-\frac{1}{(\sigma^\opt)^2}.
\end{align*}
Thus the robust approach accounts for a shift of $1/(\sigma^\opt)^2$ (of order 1 if mulitplied with inverse Fisher information) to account for a possibly higher variance in the underlying data.
In particular, in our approach, the so-called neutral spaces considered, e.g., in \cite[eq. (21)]{komorowski2011sensitivity} as
\begin{align*}
\left\{a:\ -(a-a^\opt)^T \nabla_a^2V(0,a^\opt) (a-a^\opt)\le \delta \right\}
\end{align*}
should also take this shift into account, i.e., their definition should be adjusted to
\begin{align*}
\Bigg\{a:\ -\left(a-a^\opt+\nabla_a\sqrt{\int |\nabla_x f(x, a^\opt)|_s^2\, \mu(dx)}\right)^T &\nabla_a^2V(0,a^\opt) \\
&\cdot \left(a-a^\opt+\nabla_a\sqrt{\int |\nabla_x f(x, a^\opt)|_s^2\, \mu(dx)}\right)\le \delta \Bigg\}.
\end{align*}
Lastly, let us mention another situation when our approach provides directly interpretable insights in the context of a parametric family of models. Namely, if one considers a family of models $\cP$ such that the worst-case model in the Wasserstein ball remains in $\cP$, i.e., $(x+f^\prime(x)\delta)_*\mu\in \mathcal{P}$, then considering (the first order approximation to) model uncertainty over Wasserstein balls actually reduces to considerations within the parametric family. While uncommon, such a situation would arise, e.g.,  for a scale-location family $\mathcal{P}$, with $\mu \in \mathcal{P}$ and a linear/quadratic $f$. 

\subsection{Link to the CLT of \cite{Blanchet:2019ue}}
\label{sec:CLT}
As observed in section \ref{sec:applications}\ref{sec:outofsample}above, Theorem \ref{thm:sens} allows to recover the main results in \cite{Blanchet:2019ue}. We explain this now in detail. 
Set $\|\cdot \|=|\cdot|_s$, $p=q=2$, $\mathcal{S}=\R^d$. Let $\mu_N$ denote the empirical measure of $N$ i.i.d.\ samples from $\mu$.
We impose the assumptions on $\mu$ and $f$ from \cite{Blanchet:2019ue}, including Lipschitz continuity of gradients of $f$ and strict convexity. These, in particular, imply that the optimizers $a^{\opt,N}_\delta, a^{\opt,N}$ and $a^{\opt}$, as defined in section \ref{sec:applications}\ref{sec:outofsample} are well defined and unique, 
and further $a^{\opt,N}_{1/\sqrt{N}}\to a^\opt$ as $N\to \infty$.
\cite[Thm.~1]{Blanchet:2019ue} implies that, as $N\to \infty$,
\begin{align}\label{eq:blanchet}
\sqrt{N} (a^{\opt,N}_{1/\sqrt{N}}-a^\opt) \Rightarrow (\nabla_a^2 V(0, a^\opt))^{-1} \cdot \left(H -\nabla_a\sqrt{\int |\nabla_x f(x, a^\opt)|_s^2\, \mu(dx)}\,\right),
\end{align}
where $H\sim \mathcal{N}\left(0, \int \nabla_a f(x,a^\opt)^T \nabla_a f(x, a^\opt)\, \mu(dx)\right).$ We note that for $\| \cdot \|=|\cdot|_s$ we have
\begin{align*}
h(x)=(\text{sign}(x_1)\,|x_1|^{s-1}, \dots, \text{sign}(x_k)\,|x_k|^{s-1})\cdot |x|_s^{1-s}=\nabla_x |x|_s.
\end{align*}
Thus
\begin{align*}
\nabla_a\sqrt{\int |\nabla_x f(x, a^\opt)|_s^2\, \mu(dx)} &= \frac{\int |\nabla_x f(x, a^\opt)|_s h(\nabla_x f(x, a^\opt)) \nabla_x \nabla_a f(x, a^\opt) \,\mu(dx)}{\sqrt{\int |\nabla_x f(x, a^\opt)|_s^2\, \mu(dx)}}\cdot 
\end{align*}
and \eqref{eq:blanchet} agrees with \eqref{eq:CLTfromOSE} which is justified by the Lipschitz growth assumptions on $f$, $\nabla_x f(x,a)$ and $\nabla_a \nabla_x f(x,a)$ from \cite{Blanchet:2019ue}, see \cite[eq.\ \eqref{eq:condit}]{SI}. 
In particular Theorem \ref{thm:sens} implies \eqref{eq:blanchet} as a special case. While this connection is insightful to establish\footnote{We thank Jose Blanchet for pointing out the possible link and encouraging us to explore it.} it is also worth stressing that the proofs in \cite{Blanchet:2019ue} pass through the dual formulation and are thus substantially different to ours. Furthermore, while Theorem \ref{thm:sens} holds under milder assumptions on $f$ than those in \cite{Blanchet:2019ue}, the last argument in our reasoning above requires the stronger assumptions on $f$. It is thus not clear if our results could help to significantly weaken the assumptions in the central limit theorems of \cite{Blanchet:2019ue}.

\section{Proofs} 
{
We consider the case $\mathcal{S}=\R^d$ and $\| \cdot\|=|\cdot|$ here. For the general case and additional details we refer to \cite{SI}. When clear from the context, we do not indicate the space over which we integrate. 
\begin{proof}[Proof of Theorem \ref{thm:main}]
	For every $\delta\geq 0$ let $C_{\delta}(\mu)$ denote those $\pi\in\mathcal{P}(\mathbb{R}^d\times \mathbb{R}^d)$ which satisfy 
	$$ \pi_1=\mu \text{ and }
    \left(\int |x-y|^p\,\pi(dx,dy)\right)^{1/p}\leq\delta.$$
    As the infimum in the definition of $W_p(\mu,\nu)$ is attained (see \cite[Theorem 4.1, p.43]{villani2008optimal}) one has $B_{\delta}(\mu)=\{ \pi_2 : \pi\in C_{\delta}(\mu)\}$.

    We start by showing the ``$\leq$'' inequality in the statement. 
    For any $a^\opt\in \Aoptim{0}$ one has $V(\delta)\leq\sup_{\nu\in B_\delta(\mu)} \int f(y,a^\opt)\,\nu(dy)$ with equality for $\delta=0$.
    Therefore, differentiating $f(\cdot,a^\opt)$ and using both Fubini's theorem and H\"older's inequality, we obtain that 
	\begin{align*}
	&V(\delta)-V(0)
	\leq \sup_{\pi\in C_\delta(\mu)}\int f(y,a^\opt)-f(x,a^\opt)\, \pi(dx,dy)\\
	&= \sup_{\pi\in C_\delta(\mu)} \int_0^1 \int \langle \nabla_x f(x+t(y-x),a^\opt),(y-x)\rangle\,\pi(dx,dy) dt\\
	&\leq \delta \sup_{\pi\in C_\delta(\mu)} \int_0^1 \Big(\int |\nabla_x f(x+t(y-x),a^\opt)|^q\pi(dx,dy)\Big)^{1/q} dt.
	\end{align*}
	Any choice $\pi^\delta\in C_\delta(\mu)$ converges in $p$-Wasserstein distance on $\cP(\mathbb{R}^d\times\mathbb{R}^d$) to the pushforward measure of $\mu$ under the mapping $x\mapsto (x,x)$, which we denote $[x\mapsto (x,x)]_\ast\mu$. This can be seen by, e.g., considering the coupling $[(x,y)\mapsto (x,y,x,x)]_\ast\pi^\delta$ between $\pi^\delta$ and $[x\mapsto (x,x)]_\ast\mu$.
	Now note that $q=p/(p-1)$ and the growth assumption on $\nabla_x f(\cdot, a^\opt)$ implies 
	\begin{align}
	\label{eq:bound.nabla.f.q.main}
	|\nabla_x f(x+t(y-x),a^\opt)|^q\leq c(1+|x|^p+|y|^p)
	\end{align}
	for some $c>0$ and all $x,y\in\mathbb{R}^d$, $t\in[0,1]$.
	In particular $\int |\nabla_x f(x+t(y-x),a^\opt)|^q\,\pi^\delta(dx,dy)\leq C$ for all $t\in[0,1]$ and small $\delta>0$, for another constant $C>0$.
	As further $(x,y)\mapsto |\nabla_x f(x+t(y-x),a^\opt)|^q$ is continuous for every $t$, the $p$-Wasserstein convergence of $\pi^\delta$ to $[x\mapsto (x,x)]_\ast\mu$ implies that
	\[\int |\nabla_x f(x+t(y-x),a^\opt)|^q\,\pi^{\delta}(dx,dy)
	\to\int |\nabla_x f(x,a^\opt)|^q\,\mu(dx)\]
	for every $t\in[0,1]$ for $\delta\to 0$, see \cite[Lemma \ref{lem:wasserstein.integrals.converge}]{SI}.
	Dominated convergence (in $t$) then yields ``$\leq$'' in the statement of the theorem. 

	We turn now to the opposite ``$\geq$'' inequality. 
    As $V(\delta)\geq V(0)$ for every $\delta>0$ there is no loss in generality in assuming that the right hand side is not equal to zero.
    Now take any, for notational simplicity not relabelled, subsequence of $(\delta)_{\delta>0}$ which attains the liminf in $(V(\delta)-V(0))/\delta$ and pick $a^\opt_\delta\in \Aoptim{\delta}$. By assumption, for a (again not relabelled) subsequence, one has $a^\opt_\delta\to a^\opt\in \Aoptim{0}$.
    Further note that $V(0)\leq\int f(x,a^\opt_\delta)\,\mu(dx)$ which implies 
    \begin{align*}
	V(\delta)-V(0)
	&\geq \sup_{\pi\in C_\delta(\mu)}\int f(y,a^\opt_\delta)-f(x,a^\opt_\delta) \,\pi(dx,dy).
	\end{align*}
	Now define $\pi^\delta:=[x\mapsto (x,x+\delta T(x))]_\ast\mu$, where
	\[T(x):= \frac{\nabla_x f(x,a^\opt)}{|\nabla_x f(x,a^\opt)|^{2-q}} \Big(\int |\nabla_x f(z,a^\opt)|^q\,\mu(dz)\Big)^{1/q-1}\]
	for $x\in\mathbb{R}^d$ with the convention $0/0=0$.
	Note that the integral is well defined since, as before in \eqref{eq:bound.nabla.f.q.main}, one has $|\nabla_x f(x,a^\opt)|^q\leq C(1+|x|^{p})$ for some $C>0$ and the latter is integrable under $\mu$.
	Using that $pq-p=q$ it further follows that
	\begin{align*}
	&\int |x-y|^p\,\pi^\delta(dx,dy)
	=\delta^p\int |T(x)|^p\,\mu(dx)\\
	&=\delta^p \frac{\int |\nabla_x f(x,a^\opt)|^{p+pq-2p} \,\mu(dx)}{\big(\int |\nabla_x f(z,a^\opt)|^q\,\mu(dz)\big)^{p(1-1/q)}} 
	= \delta^p.
	\end{align*}
	In particular $\pi^\delta\in C_\delta(\mu)$ and we can use it to estimate from below the supremum over $C_\delta(\mu)$ giving
	\begin{align*}
	\frac{V(\delta)-V(0)}{\delta}
	&\geq\frac{1}{\delta}\int f(x+\delta T(x),a^\opt_\delta)-f(x,a^\opt_\delta) \,\mu(dx)\\
	&=\int_0^1\int \langle \nabla_x f(x+t\delta T(x),a^\opt_\delta),T(x)\rangle \,\mu(dx)\,dt.
	\end{align*}
	For any $t\in [0,1]$, with $\delta\to 0$, the inner integral converges to 
	\begin{align*}
\int \langle \nabla_x f(x,a^\opt),T(x)\rangle \,\mu(dx) = \Big(\int |\nabla_x f(x,a^\opt)|^q \,\mu(dx)\Big)^{1/q}.
	\end{align*}
	The last equality follows from the definition of $T$ and a simple calculation. 
	To justify the convergence, first note that $\langle \nabla_x f(x+t\delta T(x),a^\opt_\delta),T(x)\rangle \to \langle \nabla_x f(x,a^\opt),T(x)\rangle $ for all $x\in\mathbb{R}^d$ by continuity of $ \nabla_x f$ and since $a^\opt_\delta \to a^\opt$. 
	Moreover, as before in \eqref{eq:bound.nabla.f.q.main}, one has $|T(x)|\leq c(1+|x|)$ for some $c>0$, hence $|\langle \nabla_x f(x+t\delta T(x),a^\opt), T(x)\rangle|\leq C(1+|x|^p)$ for some $C>0$ and all $t\in[0,1]$.
	The latter is integrable under $\mu$, hence convergence of the integrals follows from the dominated convergence theorem. This concludes the proof.
\end{proof}

\begin{proof}[Proof of Theorem \ref{thm:sens}]
We first show that
\begin{align}\label{eq:hedgesens.main}
\lim_{\delta \to 0}\frac{-\nabla_{a_i}V(0,a^\opt_\delta)}{\delta} &= \int \nabla_x \nabla_{a_i}f(x,a^\opt)\frac{\nabla_x f(x,a^\opt) }{ |\nabla_x f(x),a^\opt)|^{2-q}}\,\mu(dx)\\
&\qquad\cdot \Big( \int |\nabla_x f(x,a^\opt)|^q\,\mu(dx)\Big)^{1/q-1}\nonumber
\end{align}
for all $i \in \{1, \dots, k\}$.
We start with the ``$\le"$-inequality. For any $a\in\interior{\mathcal{A}}$ we have
\begin{align*}
\nabla_{a} f(y,a)-\nabla_{a} f(x,a)&=\int_0^1  \nabla_x \nabla_{a} f(x+t(y-x),a)(y-x)\rangle \,dt.
\end{align*}
Let $\delta > 0$ and recall that $a^\opt_\delta\in \Aoptim{\delta}$ converge to $a^\opt \in \Aoptim{0}$. 
Let $\Boptim_\delta(\mu,a^\opt_\delta)$ denote the set of $\nu\in B_{\delta}(\mu)$ which attain the value: $\int f(x,a^\opt_\delta)\,\nu(dx)=V(\delta)$. 
By \cite[Lemma \ref{lem:grad_hedge_new}]{SI} the function $a\mapsto V(\delta,a)$ is (one-sided) directionally differentiable at $a^\opt_\delta$ for all $\delta>0$ small and thus for all $i\in \{1, \dots, k\}$
\begin{align*}
 & \sup_{\nu \in \Boptim_\delta(\mu,a^\opt_\delta)} \int\nabla_{a_i} f(x,a^\opt_\delta)\,\nu(dx)\geq 0.
 \end{align*}
Then, using Lagrange multipliers to encode the optimality of $\Boptim_\delta(\mu,a^\opt_\delta)$ in $B_\delta(\mu)$, we obtain 
\begin{align*}
&-\nabla_{a_i}V(0,a^\opt_\delta)\le\sup_{\nu \in \Boptim_{\delta}(\mu,a^\opt_\delta)}\int \nabla_{a_i}f(y,a^\opt_\delta)\nu(dy)-\nabla_{a_i}V(0,a^\opt_\delta)\\
&=\sup_{\nu \in B_{\delta}(\mu) }\inf_{\lambda \in \R}\bigg(\int
\big[\nabla_{a_i}f(y,a^\opt_\delta)+\lambda (f(y,a^\opt_\delta)
-V(\delta))\big]\nu(dy)\\
&\quad - \int \big[\nabla_{a_i}f(x,a^\opt_\delta) +\lambda (f(x,a^\opt_\delta)-V(0,a^\opt_\delta))\big]\mu(dx)\bigg)\\
&=\inf_{\lambda\in \R} \bigg( \sup_{\pi \in C_{\delta}(\mu)} 
 \int_0^1  \int  \Big\langle \nabla_x \nabla_{a_i}f(x+t(y-x),a^\opt_\delta)\\
&\quad +\lambda \nabla_x f(x+t(y-x),a^\opt_\delta), y-x\Big\rangle \,\pi(dx,dy)\, dt\\
&\quad -\lambda \sup_{\pi \in C_{\delta}(\mu)} \int_0^1 \int  \langle \nabla_x f(x+t(y-x),a^\opt_\delta, y-x\rangle \, \pi(dx,dy)\,dt \bigg)
\end{align*}
where we used a minimax argument as well as Fubini's theorem. We note that the functions above satisfy the assumptions of Theorem \ref{thm:main} for a fixed $\lambda$. In particular using exactly the same arguments as in the proof of Theorem \ref{thm:main} (i.e., H\"older's inequality and a specific transport attaining the supremum) we obtain by exchanging the order of $\limsup$ and $\inf$ that
\begin{align}
&\limsup_{\delta \to 0}\frac{-\nabla_{a_i}V(0,a^\opt_\delta)}{\delta}\ \label{eq:gradVestimate}\\
&\leq \inf_{\lambda \in \R} \Bigg( \left( \int \left| \nabla_x \nabla_{a_i}f(x,a^\opt)+\lambda \nabla_x f(x,a^\opt) \right|^q\, \mu(dx) \right)^{1/q}\nonumber\\
&\qquad -\lambda \left( \int \left| \nabla_x f(x,a^\opt) \right|^q \, \mu(dx)\right)^{1/q}\Bigg).\nonumber
\end{align}
For $q=2$ the infimum can be computed explicitly and equals 
\begin{align*}
\frac{\int \langle \nabla_x \nabla_{a_i}f(x,a^\opt),\nabla_x f(x,a^\opt)\rangle\,\mu(dx)}{\sqrt{\int |\nabla_x f(x,a^\opt)|^2\,\mu(dx)}}
\end{align*} 
For the general case we refer to \cite[Lemma \ref{lem:inf}]{SI}, noting that by assumption $ \nabla_x f(x,a^\opt)\neq 0$, we see that the RHS above is equal to the RHS in \eqref{eq:hedgesens.main}. 

The proof of the ``$\ge"$-inequality in \eqref{eq:hedgesens.main} follows by the very same arguments.
Indeed, \cite[Lemma \ref{lem:grad_hedge_new}]{SI} implies that 
\[ \inf_{\nu \in \Boptim_\delta(\mu,a^\opt_\delta)} \int\nabla_{a_i} f(x,a^\opt_\delta)\,\nu(dx)\leq 0\]
for all $i\in\{1,\dots,k\}$  and we can write
\begin{align*}
&-\nabla_{a_i}V(0,a^\opt_\delta) 
\geq  \inf_{\nu \in \Boptim_\delta(\mu,a^\opt_\delta)} \int \nabla_{a_i} f(y, a^\opt_\delta)\,\nu(dy) -\nabla_{a_i}V(0,a^\opt_\delta)\\
&=\inf_{\nu \in B_{\delta}(\mu) }\sup_{\lambda \in \R}\bigg(\int
\big[\nabla_{a_i}f(y,a^\opt_\delta)+\lambda (f(y,a^\opt_\delta)
-V(\delta))\big]\nu(dy)\\
&\quad - \int \big[\nabla_{a_i}f(x,a^\opt_\delta) +\lambda (f(x,a^\opt_\delta)-V(0,a^\opt_\delta))\big]\mu(dx)\bigg).
\end{align*}
From here on, we argue as in the ``$\leq"$-inequality and conclude that indeed \eqref{eq:hedgesens.main} holds.

By assumption the matrix $\nabla_{a}^2 V(0,a^\opt)$ is invertible.
Therefore, in a small neighborhood of $a^\opt$, the mapping $\nabla_a V(0,\cdot)$ is invertible.
In particular $a^\opt_\delta=(\nabla_{a} V(0,\cdot))^{-1}\left(\nabla_{a}V(0,a^\opt_\delta)\right)$ and by the first order condition $a^\opt=(\nabla_{a} V(0,\cdot))^{-1}\left(0\right)$.
Applying the chain rule and using \eqref{eq:hedgesens.main} gives 
\begin{align*}
\lim_{\delta \to 0}\frac{a^\opt_\delta-a^\opt}{\delta}
&=(\nabla_{a}^2V(0,a^\opt))^{-1}\cdot \ \lim_{\delta \to 0}\frac{\nabla_{a}V(0,a^\opt_\delta)}{\delta}\\
&=- (\nabla^2_aV(0,a^\opt))^{-1}  \Big(\int |\nabla_x f(z,a^\opt)|^q\,\mu(dz)\Big)^{1/q-1} \\
&\cdot \int \frac{\nabla_{x}\nabla_af(x,a^\opt) \nabla_x f(x,a^\opt)}{|\nabla_x f(x,a^\opt)|^{2-q}} \, \mu(dx).
\end{align*}
This completes the proof.
\end{proof}





\bibliographystyle{apalike}

\bibliography{bibsens,bibsens2}

\begin{appendix}

\section{Preliminaries}

We recall and further explain the setting from the main body of the paper \cite{main}.
Take $d,k\in \N$, endow $\R^d$ with the Euclidean norm $|\cdot|$. Throughout the paper we take the convention that topological properties, such as continuity or closure, are understood w.r.t.\ $|\cdot|$. We let $\interior{\Gamma}, \bar{\Gamma}, \partial\Gamma, \Gamma^c$ denote respectively the interior, the closure, the boundary and the complement of a set $\Gamma\subset\R^d$. We denote the set of all probability measures on $\Gamma$  by $\mathcal{P}(\Gamma)$. For a variable $\gamma\in \Gamma$, we will denote the optimizer by $\gamma^\opt$ and the set of optimizers by $\Gamma^\opt$. 

Fix a seminorm $\|\cdot\|$ on $\mathbb{R}^d$ and denote by $\|\cdot\|_\ast$ its (extended) dual norm, i.e.\ $\|y\|_\ast:=\sup\{ \langle x,y\rangle : \|x\|\leq 1\}$. 
Let us define the equivalence relation $x\sim y$ if and only if $\|x-y\|=0$. Furthermore let us set $U:=\{x\in \R^d \ :\ \|x\|=0\}$ and write $[x]=x+U$.  With this notation, the quotient space $\R^d/U=\{[x]\ :  \ x\in \R^d\}$ is a normed space for $\|\cdot\|$. Furthermore, by the triangle inequality for $\|\cdot\|$ and equivalence of norms on $\R^d$, there exists $c>0$ such that $\|x\| \leq c |x|$ and $|x|\le c \|x\|_\ast$ for all $x\in \R^d$. As $|\cdot|$ is Hausdorff, this immediately implies that $\|\cdot\|_\ast$ is Hausdorff as well. Furthermore we conclude, that $\|\cdot\|$ is continuous and $\|\cdot\|_\ast$ is lower semicontinuous w.r.t.\ $|\cdot|$ (as the supremum over continuous functions $\langle x, \cdot\rangle$). Lastly we make the convention that $B_{\delta}(x)$ denotes the ball of radius $\delta$ around $x$ in $|\cdot|$. As our setup is slightly non-standard, we state the following lemmas for completeness:

\begin{lemma}\label{dani:1}
	For every $x\in\mathbb{R}^d$ we have that $\|x\|=\sup \{\langle x, y\rangle \ : \ \|y\|_\ast \le 1\}$.
\end{lemma}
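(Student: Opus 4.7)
The plan is to establish the two inequalities separately, keeping careful track of the null space $U=\{x\in\R^d:\|x\|=0\}$, since $\|\cdot\|$ is only a seminorm. Write $\mathsf{D}(x):=\sup\{\langle x,y\rangle : \|y\|_\ast\le 1\}$ for brevity.

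For the ``$\ge$'' direction, I would split according to whether $x\in U$ or not. If $\|x\|>0$, then for any $y$ with $\|y\|_\ast\le 1$, applying the definition of $\|\cdot\|_\ast$ to $x/\|x\|$ (which has unit seminorm) gives $\langle x/\|x\|,y\rangle\le 1$, hence $\langle x,y\rangle\le \|x\|$; taking the supremum yields $\mathsf{D}(x)\le\|x\|$. If $x\in U$, I would first observe that any $y$ with $\|y\|_\ast<\infty$ must annihilate $U$: otherwise there exists $u\in U$ with $\langle u,y\rangle\neq 0$, but $tu\in U\subset\{z:\|z\|\le 1\}$ for all $t\in\R$, so $\|y\|_\ast=+\infty$. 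Consequently $\langle x,y\rangle=0=\|x\|$ whenever $\|y\|_\ast\le 1$.

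For the ``$\le$'' direction, if $x\in U$ the inequality is immediate since $0\in\{y:\|y\|_\ast\le 1\}$. Otherwise $\|x\|>0$ and I invoke Hahn--Banach. On the subspace $E:=\R\cdot x + U\subseteq\R^d$ define the linear functional $L(tx+u):=t\|x\|$. This is well-defined (if $t_1 x+u_1=t_2 x+u_2$ then $(t_1-t_2)x\in U$, forcing $t_1=t_2$ since $x\notin U$) and satisfies $|L(tx+u)|=|t|\|x\|=\|tx\|=\|tx+u\|$ by the seminorm property, so $L$ is dominated by $\|\cdot\|$ on $E$. Extend $L$ to a linear functional $\tilde L$ on $\R^d$ satisfying $\tilde L(z)\le\|z\|$ for all $z$ (applying the classical Hahn--Banach theorem with the sublinear majorant $\|\cdot\|$). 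In finite dimensions $\tilde L$ is represented as $\tilde L(z)=\langle z,y\rangle$ for some $y\in\R^d$, and the bound $\langle z,y\rangle\le\|z\|$ for every $z$ translates precisely into $\|y\|_\ast\le 1$. Since $\langle x,y\rangle=L(x)=\|x\|$, we conclude $\mathsf{D}(x)\ge\|x\|$.

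The two inequalities combine to give the claim. The main ``obstacle'' is simply the bookkeeping around $U$: one must handle the case $x\in U$ separately in the ``$\ge$'' direction (exploiting the extended-valued nature of $\|\cdot\|_\ast$) and must verify that the auxiliary functional $L$ is well-defined and dominated by the seminorm before applying Hahn--Banach. There is no deeper analytic difficulty since everything takes place in finite dimensions.
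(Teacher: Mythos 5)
Your argument is correct but takes a genuinely different route from the paper's. The paper proves the lemma in one line by invoking the bipolar theorem: the unit ball $\{x:\|x\|\le 1\}$ is convex, closed, and contains the origin, so it equals its own bipolar, from which the identity follows after a short scaling argument (handling the case $\|x\|=0$ via the fact that the ball then contains the whole line $\R x$). You instead give an explicit, elementary Hahn--Banach construction: for the nontrivial ``$\|x\|\le\mathsf{D}(x)$'' direction you define a dominated linear functional on the subspace $\R x + U$ and extend it, then read off the representing vector $y$. The two approaches buy different things. The bipolar route is shorter and defers all the real work to a standard citation, but it conceals the seminorm-specific subtleties. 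Your construction exposes them cleanly: you verify that the functional is well-defined modulo the null space $U$, that it is dominated by the seminorm (using $\|tx+u\|=\|tx\|$), and you even note that any $y$ with $\|y\|_\ast<\infty$ must annihilate $U$ -- a fact the paper leaves implicit but which is used tacitly elsewhere (e.g.\ in Lemma~\ref{dani:2}). Both proofs ultimately rest on the same separation principle, since the finite-dimensional bipolar theorem is itself proved via Hahn--Banach separation, so neither is logically deeper; yours is simply self-contained where the paper's is a pointer to the literature.
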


\begin{proof}
As $\{ x\in \mathcal{S}\ : \ \|x\|\le 1\}$ is convex and closed, this follows directly from the bipolar theorem.
\end{proof}

\begin{lemma}\label{dani:2}
Assume that $\|\cdot\|_\ast$ is strictly convex. Then the following hold:
\begin{enumerate}[(i)]
\item  For all $x\in \R^d$ there exists $h(x) \in \R^d$ such that $\|h(x)\|_\ast=1$ and $\|x\|=\langle x,h(x)\rangle$.
	If $x\neq 0$, then $h(x)$ is unique.
\item The map $h:\R^d\setminus \{0\} \to \R^d$ is continuous.
\end{enumerate}
\end{lemma}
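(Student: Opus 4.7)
The approach is to exploit the compactness of $K := \{y \in \R^d : \|y\|_\ast \le 1\}$. This set is bounded in $|\cdot|$ thanks to the inclusion $|y| \le c\|y\|_\ast$ recorded in the preliminaries, and it is closed because $\|\cdot\|_\ast$ is lower semicontinuous with respect to $|\cdot|$; hence $K$ is compact. By Lemma~\ref{dani:1}, $\|x\| = \sup_{y \in K}\langle x, y\rangle$, and continuity of $y \mapsto \langle x, y\rangle$ yields a maximiser $h(x) \in K$. When $\|x\| > 0$, the duality bound $\langle x, h(x)\rangle \le \|x\|\,\|h(x)\|_\ast$ (an immediate consequence of Lemma~\ref{dani:1}) forces $\|h(x)\|_\ast \ge 1$, hence $\|h(x)\|_\ast = 1$.

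Uniqueness for $x \neq 0$ (with $\|x\| > 0$) follows from strict convexity of $\|\cdot\|_\ast$. If $h_1, h_2$ both achieve the supremum with $\|h_i\|_\ast = 1$, then the midpoint $h := \tfrac12(h_1 + h_2)$ also achieves it; the same duality bound combined with the triangle inequality pins down $\|h\|_\ast = 1$, after which strict convexity of $\|\cdot\|_\ast$ gives $\|h_1 - h_2\|_\ast = 0$. A short verification shows that $\|\cdot\|_\ast$ is in fact a proper extended norm (if $\|y\|_\ast = 0$ then $\langle z, y\rangle \le 0$ for all $z$ with $\|z\| \le 1$, and by homogeneity this forces $\langle z, y\rangle = 0$ for all $z \in \R^d$, whence $y = 0$), so $\|h_1 - h_2\|_\ast = 0$ actually gives $h_1 = h_2$.

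For (ii) a routine subsequential compactness argument suffices. Fix $x_n \to x$ with $x \neq 0$; the sequence $(h(x_n))$ lies in $K$ and therefore admits a convergent subsequence with limit $h^\ast$. Lower semicontinuity of $\|\cdot\|_\ast$ gives $\|h^\ast\|_\ast \le 1$, while continuity of the inner product together with the continuity of $\|\cdot\|$ with respect to $|\cdot|$ (which is Lipschitz thanks to $\|y\| \le c|y|$) yields $\langle x, h^\ast\rangle = \lim_k \langle x_{n_k}, h(x_{n_k})\rangle = \lim_k \|x_{n_k}\| = \|x\|$. The uniqueness from (i) then forces $h^\ast = h(x)$, and since every subsequential limit coincides with $h(x)$ the whole sequence converges.

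The step I expect to be the main obstacle is reconciling the lemma with the seminorm convention: for $x \ne 0$ lying in the degeneracy subspace $U = \{\|\cdot\| = 0\}$, uniqueness manifestly fails, since any unit-$\|\cdot\|_\ast$ element annihilating $x$ serves equally well as $h(x)$. I would handle this by reading the statements on $\{x : \|x\| > 0\}$, or equivalently by passing to the quotient $\R^d / U$ on which $\|\cdot\|$ descends to a genuine norm and the argument above applies verbatim; this is precisely the regime needed in Theorem~\ref{thm:sens}, where the convention $0/0 = 0$ already defuses the remaining ambiguity.
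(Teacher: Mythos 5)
Your argument is correct and follows the same strategy as the paper's: existence of $h(x)$ from Lemma~\ref{dani:1} together with compactness of $\{\|\cdot\|_\ast\le 1\}$, uniqueness from strict convexity applied to the midpoint $\tfrac12(h_1+h_2)$, and continuity via subsequential compactness plus part (i). The only cosmetic differences are that the paper invokes the Hausdorff property of $\|\cdot\|_\ast$ already recorded in the preliminaries (rather than re-deriving that $\|y\|_\ast=0\Rightarrow y=0$ as you do), and phrases (ii) as a proof by contradiction. Your closing observation about the degenerate subspace $U$ is well taken and not a defect of your proof but a genuine subtlety in the lemma's phrasing: the paper's uniqueness argument tacitly needs $\|x\|>0$ (the contradiction $\|x\|/\|\bar h\|_\ast>\|x\|$ is vacuous when $\|x\|=0$), so the statement should be read with $x\neq 0$ meaning $\|x\|\neq 0$, i.e., in the quotient $\R^d/U$, which is consistent with the paper's $0/0=0$ convention and its use of $h$ in Theorem~\ref{thm:sens}.
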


\begin{proof}
Fix $x\in \R^d\setminus\{0\}$. The existence of $h(x)\in \R^d$ in \textit{(i)} follows from Lemma \ref{dani:1}. Assume towards a contradiction that there exists another $\tilde{h}(x)\in \R^d$ with $\|\tilde{h}(x)\|_\ast=1$, $\langle x,\tilde{h}(x)\rangle =\|x\|$ and $\tilde{h}(x)\neq h(x)$. Defining $\bar{h}(x)=(h(x)+\tilde{h}(x))/2$ we have $\langle x, \bar{h}(x)\rangle =(\langle x, h(x)\rangle+ \langle x, \tilde{h}(x) \rangle )/2=\|x\|$. On the other hand,  by the Hausdorff property of $\|\cdot\|_\ast$, we have $\|h(x) -\tilde{h}(x)\|_\ast \neq 0$ and thus, by strict convexity of $\|\cdot\|_\ast$, $\|\bar{h}(x)\|_\ast <1$. Using again Lemma \ref{dani:1}, we conclude $\|x\|\ge \langle x ,  \bar{h}(x)/\|\bar{h}(x)\|_\ast \rangle >\|x\|$, a contradiction.\\
For \textit{(ii)} we assume towards a contradiction that for some sequence $(x_n)_{n\in \N}$ in $\R^d$ we have $\lim_{n \to \infty }x_n=x\in \R^d\setminus\{0\}$, but $\lim_{n\to \infty} h(x_n)\neq h(x)$. As remarked above, we have $\{ \|\cdot\|_{\ast}\le 1 \} \subseteq B_c(0)$, in particular $\lim_{n \to \infty}h(x_n)=y\in \R^d$ after taking a subsequence. Recalling that $h(x)\neq y$ and $\|\cdot\|_\ast$ is lower semicontinuous, we conclude that $\|y\|_\ast \le 1$ and in particular $\|x\|> \langle x, y\rangle$ by Lemma \ref{dani:1} and \textit{(i)}. Finally $$\|x\|=\lim_{n \to \infty}\|x_n\|=\lim_{n \to \infty}\langle x_n, h(x_n) \rangle =\langle x, y\rangle,$$
which leads to a contradiction.
\end{proof}

\begin{lemma}\label{dani:3}
If $\|\cdot\|$ is strictly convex, then $\|\cdot\|_\ast$ is strictly convex as well.
\end{lemma}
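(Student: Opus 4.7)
The plan is to proceed by contradiction: assume $\|\cdot\|_\ast$ fails strict convexity, so there exist $y_1,y_2\in\mathbb{R}^d$ with $\|y_1\|_\ast=\|y_2\|_\ast=\|\frac{1}{2}(y_1+y_2)\|_\ast=1$ but $\|y_1-y_2\|_\ast\neq 0$, and derive a contradiction from the strict convexity of $\|\cdot\|$.

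The first step is to extract a common primal ``witness'' $x_0$ attaining the supremum that defines the dual norm of the midpoint. Writing $U:=\{x\in\mathbb{R}^d:\|x\|=0\}$, the quotient $\mathbb{R}^d/U$ carries a genuine finite-dimensional norm, so $\{[x]:\|x\|\leq 1\}$ is compact. Since $\|\frac{1}{2}(y_1+y_2)\|_\ast<\infty$, the vector $\frac{1}{2}(y_1+y_2)$ annihilates $U$, and the linear functional $\langle\cdot,\frac{1}{2}(y_1+y_2)\rangle$ descends to the quotient, so its supremum is attained; after rescaling I obtain $x_0$ with $\|x_0\|=1$ and $\langle x_0,\frac{1}{2}(y_1+y_2)\rangle=1$. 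The inequalities $\langle x_0,y_i\rangle\leq\|x_0\|\cdot\|y_i\|_\ast=1$ for $i=1,2$ together with the average equalling $1$ then force $\langle x_0,y_1\rangle=\langle x_0,y_2\rangle=1$, so $x_0$ simultaneously attains the dual norm of both $y_1$ and $y_2$.

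For the final step I would leverage $\|y_1-y_2\|_\ast\neq 0$ to pick $\bar x\in\mathbb{R}^d$ with $\|\bar x\|\leq 1$ and $\langle\bar x,y_1-y_2\rangle>0$, and then combine $x_0$ and $\bar x$ into a perturbation $x_0\pm\varepsilon\bar x$ (for small $\varepsilon>0$); after renormalising in $\|\cdot\|$ and using the bipolar identity of Lemma \ref{dani:1} together with the sharp equalities $\langle x_0,y_i\rangle=1$, the aim is to produce two distinct unit-$\|\cdot\|$ vectors whose midpoint is again unit-norm, directly contradicting strict convexity of $\|\cdot\|$.

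This last transfer step is the main obstacle. The first two steps rely only on compactness and linearity and are essentially routine; but translating a non-trivial separation on the dual side into a genuine failure of strict convexity on the primal side is delicate and is exactly where the hypothesis on $\|\cdot\|$ must intervene. I expect the core of the argument to be a careful quotient-space analysis combined with sharp asymptotic control on how the $\varepsilon$-perturbation $x_0\pm\varepsilon\bar x$ interacts with the supporting equalities $\langle x_0,y_i\rangle=1$.
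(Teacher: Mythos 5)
Your plan follows essentially the same path as the paper's proof: construct a common primal witness $x_0$ (the paper's $k(\bar y)$) for the midpoint via quotient compactness, and observe that averaging forces $\langle x_0,y_1\rangle=\langle x_0,y_2\rangle=1$. From there you call the final transfer ``the main obstacle''; the paper, at exactly the same point, simply asserts $\|\bar y\|_\ast=(\langle [k(\bar y)],y\rangle+\langle [k(\bar y)],y'\rangle)/2<1$ with no further argument. So you have correctly located the crux, but not filled it in --- and neither has the paper.

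This gap is genuine: the step cannot be completed from strict convexity of $\|\cdot\|$ alone, because the statement is actually false under that hypothesis. Strict convexity of $\|\cdot\|$ gives uniqueness of the primal maximizer $k(y)$ for each fixed $y$ (modulo $U$), but it does not prevent two $\|\cdot\|_\ast$-distinct dual vectors from sharing the same maximizer. The correct hypothesis is Gateaux smoothness of $\|\cdot\|$ (uniqueness of the supporting functional at each unit vector), which is the classical dual of strict convexity of $\|\cdot\|_\ast$; a strictly convex norm need not be smooth. Concretely, in $\mathbb{R}^2$ take the lens $B=\{x:|x-(1,0)|\le\sqrt 2\}\cap\{x:|x+(1,0)|\le\sqrt 2\}$ with gauge $\|\cdot\|_B$. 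This norm is strictly convex (its boundary is a union of two strictly convex circular arcs), yet $B$ has corners at $(0,\pm1)$, so the polar $B^\circ$ has flat faces: one checks $\|(1,1)\|_\ast=\|(-1,1)\|_\ast=\|(0,1)\|_\ast=1$ while $\|(1,1)-(-1,1)\|_\ast>0$, so $\|\cdot\|_\ast$ is not strictly convex. Your perturbation $x_0\pm\varepsilon\bar x$ would succeed only if smoothness of $\|\cdot\|$ at $x_0$ were available, since that forces the supporting functional to be unique and hence $\langle\bar x,y_1-y_2\rangle=0$ for every admissible $\bar x$, giving $\|y_1-y_2\|_\ast=0$; that additional smoothness hypothesis (satisfied by every $\ell^s$-norm with $1<s<\infty$, which is what the paper actually uses) is precisely what is missing both from your sketch and from the paper's one-line assertion.
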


\begin{proof}
Fix $y\in \R^d\setminus \{0\}$. We first note that 
\begin{align*}
k(y):=\{ x\in \R^d\ : \ \|x\|=1, \, \|y\|_\ast =\langle x, y\rangle\}/U
\end{align*}
is uniquely defined. Indeed, this follows from applying the exact same arguments as in the proof of Lemma \ref{dani:2}, adjusting for $U$.
Take now $y, y'\in \R^d$ such that $\|y\|_\ast=\|y'\|_\ast=1$ and $\|y-y'\|_\ast\neq 0$. Set $\bar{y}=(y+y')/2$ and note that $\|\bar{y}-y\|_\ast,\|\bar{y}-y'\|_\ast \neq 0$.  Then $\|\bar{y}\|_\ast=( \langle [k(\bar{y})] , y \rangle+\langle  [k(\bar{y})], y' \rangle )/2<1$. This shows the claim.
\end{proof}

Let $\mathcal{S}$ denote the state space which is a closed convex subset of $\R^d$. Fix $p>1$ and take $q=p/(p-1)$ so that $1/p+1/q=1$.
For probability measures $\mu$ and $\nu$ on $\mathcal{S}$, we define their $p$-Wasserstein distance as 
\begin{equation*}
    W_p(\mu, \nu):=\inf\left\{\int_{\mathcal{S}\times \mathcal{S}} \|x-y\|_\ast ^p\,\pi(dx,dy)\colon \pi \in \mathrm{Cpl}(\mu,\nu) \right\}^{1/p},
\end{equation*}
where $\mathrm{Cpl}(\mu,\nu)$ is the set of all probability measures $\pi\in\mathcal{P}(\mathcal{S}\times \mathcal{S})$ with first marginal $\pi_1:=\pi(\cdot\times\mathcal{S})=\mu$ and second marginal $\pi_2:=\pi(\mathcal{S}\times\cdot)=\nu$.
In the proofs we sometimes also use the $p$-Wasserstein distance with respect to the Euclidean norm $|\cdot|$ given by
$$W_{p}^{|\cdot|}(\mu, \nu)= \inf\left\{\int_{\mathcal{S}\times \mathcal{S}} |x-y|^p\,\pi(dx,dy)\colon \pi \in \mathrm{Cpl}(\mu,\nu) \right\}^{1/p}.$$
Recall that $|\cdot|\leq c\|\cdot\|_\ast$ for some constant $c>0$, which in turn implies that $W_{p}^{|\cdot|}(\cdot, \cdot)\leq c W_{p}(\cdot, \cdot)$.
A Wasserstein ball of size $\delta\geq 0$ around $\mu$ is denoted
\begin{equation*}
    B_{\delta}(\mu) := \left\{\nu \in \mathcal{P}(\mathbb{R}^d) :  W_p(\mu,\nu) \leq \delta  \right\}.
\end{equation*}
From now on, we fix $\mu\in \cP(\cS)$ such that $\mu(\partial\cS)=0$ and $\int_{\cS} |x|^p\,\mu(dx)<\infty$.
Let $\cA$ denote the action (decision) space which is a convex and closed subset of $\R^k$.  
We consider robust stochastic optimization problem \eqreff{eq. minmax}:
\begin{align*}
V(\delta)\coloneq\inf_{a\in\cA}V(\delta,a)\coloneq \inf_{a\in\cA} \sup_{\nu \in B_{\delta}\left( \mu \right)}\int_{\mathcal{S}} f\left( x,a \right)\,\nu(dx).
\end{align*}
In accordance with our conventions, we write $a^\opt$ for an optimizer: $V(\delta)=V(\delta,a^\opt)$ and $\Aoptim\subset \cA$ for the set of such optimizers. We also let $\Boptim_\delta(\mu,a)$ denote the set of measures $\nu^\opt$ such that $V(\delta,a)=\int_{ \mathcal{S}} f(x,a) \,\nu^\opt(dx)$ and sometimes write $\Boptim_\delta(\mu)$ for $\Boptim_\delta(\mu,a^\opt)$ if $a^\opt\in \Aoptim{\delta}$ is fixed.

\section{Discussion, extensions and proofs related to Theorem \ref{thm:main}}

We complement now the discussion of Theorem \ref{thm:main}. We start with some remarks, extensions and further examples before proceeding with the proofs, including a complete proof of Theorem \ref{thm:main} for general seminorms $\|\cdot\|$.

\subsection{Discussion and extensions of Theorem \ref{thm:main}}
\label{sec:literature}

\begin{remark}\label{rk:failforp1}
Theorem \ref{thm:main} may fail for $p=1$. Indeed take $d=1$, $\|\cdot\|=|\cdot |$ and $f(x)=x^2$, $\mathcal{S}=[-1,1]$, $\mu$ the point mass in zero, $\mu=\delta_0$. Then $\nabla_x f(x)=2x$ and the $\mu(dx)$--essential supremum of $|\nabla_x f(x)|$ is equal to $0$. However $\nu_\lambda:=\lambda\delta_1+(1-\lambda)\delta_0\in B_\lambda(\mu)$ for all $\lambda\in [0,1]$ and it is easy to see $V(\delta)=\delta$ and thus $V^\prime(0)=1$. The point where the proof of Theorem \ref{thm:main} breaks down is that the map $\delta$ to the $\nu_\delta$--essential supremum of $|\nabla_x f(x)|$ is not continuous at $\delta=0$.
\end{remark}

\begin{remark}
	Let $p>2$. In addition to Assumption \ref{ass:main}, suppose that $f$ is twice continuously differentiable and that for ever $r\geq 0$ there is $c\geq 0$ such that $|\nabla_x^2 f(x,a)|\leq  c(1+|x|^{p-2})$ for all $x\in\mathcal{S}$ and all $a\in\mathcal{A}$ with $|a|\leq r$.
	Then, the same arguments as in the proof of Theorem \ref{thm:main} but with a second order Taylor expansion yield
	\begin{align*}
	V(\delta)
	\leq V(0) & + \delta \left( \int_\mathcal{S} \| \nabla_x f(a^\opt,x) \|^q \, \mu(dx) \right)^{1/q} \\
	& + \delta^2 \left( \int_\mathcal{S} \lambda_{\max}\Big( \frac{1}{2} \nabla_x^2 f(a^\opt,x)\Big)^{r} \, \mu(dx) \right)^{1/r} + o(\delta^2),
	\end{align*}
	for small $\delta\geq0$, where $\lambda_{\max}$ denotes the largest eigenvalue of the Hessian taken w.r.t.\ the norm $\|\cdot\|_\ast$ and $r=p/(p-2)$ is such that $2/p + 1/r=1$.
	
	In particular, this means that if the term in front of $\delta^2$ is the same order of magnitude as the term in front of $\delta$, then the first order approximation is quite accurate for small $\delta$.
	Note that larger $p$ implies smaller $r$ and therefore a smaller term in front of the $\delta^2$ term.
\end{remark}

\begin{remark}\label{rk:abscont}
We believe that Assumption \ref{ass:main} lists natural sufficient conditions for differentiability of $V(\delta)$ in zero. In particular all these conditions are used in the proof of Theorem \ref{thm:main}. Relaxing Assumption \ref{ass:main} seems to require a careful analysis of the interplay between (the space explored by balls around) $\mu$ and the functions $f, \nabla_x f$. We state here a straightforward extension to the case where $f$ is only weakly differentiable and leave more fundamental extensions (e.g., to manifolds) for future research.\\
Specifically, in case that the baseline distribution $\mu$ is absolutely continuous w.r.t.\ the Lebesgue measure and $\| \cdot \|=|\cdot|$, Theorem \ref{thm:main} remains true if we merely assume that $f(\cdot,a)$ has a weak derivative (in the Sobolev sense) on $\interior{\cS}$ for all $a\in \mathcal{A}$ and replace $\nabla_x f(\cdot, a)$ by the weak derivative of $f(\cdot,a)$ in Assumption \ref{ass:main}. More concretely the first point of Assumption \ref{ass:main} should read:
\begin{itemize}
\item The weak derivative $(x,a)\mapsto  g(x,a)$ of $f(\cdot,a)$ is continuous at every point $(x,a)\in N\times  \mathcal{A}^\opt(0)$, where $N$ is a Lebesgue-null set, and for every $r>0$ there is $c>0$ such that $|g(x,a)|\leq c(1+|x|^{p-1})$ for all $x\in\cS$ and $|a|\leq r$. 
	\end{itemize}
\end{remark}
\begin{proof}[Proof of Remark \ref{rk:abscont}]
For notational simplicity we only consider the case $\mathcal{S}=\R^d$. 
Note that by, e.g., \cite{brezis2010functional}[Theorem 8.2] we can assume that $f(\cdot, a)$ is continuous and satisfies
\begin{align*}
f(y,a)-f(x,a)=\int_0^1 \langle g(x+t(y-x),a),y-x\rangle\, dt
\end{align*} 
for all $x,y\in \R^d$ and all $a\in \mathcal{A}$. 
Furthermore
\begin{align}\label{eq:absolute_continuity}
\sup_{\nu \in B_{\delta}(\mu)} \int_{\mathcal{S}} f(x,a)\,\nu(dx)
=\sup_{\nu \in B_{\delta}(\mu), \ \nu \ll \mathrm{Leb}} \int_{\mathcal{S}} f(x,a)\,\nu(dx),
\end{align}
where $\nu\ll\mathrm{Leb}$ means that $\nu$ is absolutely continuous w.r.t.\ the Lebesgue measure. Indeed, let us take $\nu  \in B_{\delta}(\mu)$ and set $\tilde{\nu}=\tilde{\nu}(t, \epsilon)=(1-t)(\nu \ast N(0,\epsilon))+t\mu$, where $N(0, \epsilon)$ denotes the multivariate normal distribution 
with covariance $\epsilon \mathbf{I}$, $\epsilon>0$ and $\ast$ denotes the convolution operator. 
For every $0<t<1$, by convexity of $W_p^p(\cdot, \cdot)$ and the triangle inequality for $W_p$, we have 
\begin{align*}
W_p^p(\mu,\tilde{\nu})
&\le (1-t)W_p^p(\nu \ast N(0,\epsilon),\mu)+tW_p^p(\mu,\mu)\\
&= (1-t)W_p^p(\nu \ast N(0,\epsilon),\mu)\\
&\leq (1-t) \left( W_p(\nu \ast N(0,\epsilon),\nu)+ W_p(\nu,\mu) \right)^p.
\end{align*}
By assumption $W_p(\nu,\mu)\leq\delta$ and one can check that $W_p(\nu \ast N(0,\epsilon),\nu)\to 0$ as $\varepsilon\to0$.
Hence, for every $t<1$ there exists small $\varepsilon=\varepsilon(t)>0$ such that $W_p(\mu,\tilde{\nu})\leq \delta$.
As further $\lim_{t \to 1} \int_{ \mathcal{S}} f(x,a) \,\tilde{\nu}(dx)=\int_{ \mathcal{S}} f(x,a)\, \nu(dx)$, this shows \eqref{eq:absolute_continuity}.  
The proof of the remark now follows by the exact same arguments as in the proof of Theorem \ref{thm:main}.
\end{proof}

A natural example, which highlights the importance of  Remark \ref{rk:abscont} is the following:

\begin{example}
We let $\mu$ be a model for a vector of returns $X\in \mathcal{S}=\R^d$ and assume that $\mu$ is absolutely continuous with respect to Lebesgue measure. Let further $\| \cdot \|=|\cdot|$ and let $z\in B\subset \R^d$ denote a portfolio. We then consider the average value at risk  at level $\alpha\in(0,1)$ of the portfolio wealth $\langle z, X \rangle$, which can be written as
\begin{align*}
\text{AV@R}_{\alpha}\left(\langle z, X \rangle \right)=\frac{1}{\alpha} \int_{1-\alpha}^1 \text{V@R}_{u}(\langle z, X \rangle) du,
\end{align*}
where $\text{V@R}_u(\langle z, X \rangle)$ is the value at risk at level $u\in (0,1)$ defined as
\begin{align*}
\text{V@R}_u(\langle z, X \rangle)=\inf\{ x\in \R^d\  : \mu(\langle z, x\rangle)\ge u\}.
\end{align*}
We note that the average value at risk is an example for an optimized certainty equivalent (OCE), when choosing $l(x,a)=a+  \frac{1}{\alpha}  (x-a)^+$ in \cite[p.~3]{main}. We can thus rewrite the optimization problem
\begin{align*}
                    V(0) & = \inf_{z\in B} \text{AV@R}_{\alpha}\left(\langle z, X \rangle \right)
\end{align*}       
as 
\begin{align*}           
                 V(0)   = \inf_{z\in B, m\in \R}  \left(  m+\frac{1}{\alpha}\int_{\mathcal{S}} \left( \langle z ,x\rangle -m \right)^+ \mu(dx) \right) .
\end{align*}
Set $\mathcal{A}=B\times \R$ and assume that there exists a unique minimiser $(z^\opt, m^\opt) \in \Aoptim{0}$ of $V(0)$. Then $m^\opt$ is given by $ \mathrm{V@R} (\langle z^\opt, X \rangle)$.
The robust version of $V(0)$ reads
                \begin{equation*}
                  V(\delta) =
                  \inf_{(z,m) \in \mathcal{A}} \sup_{\nu \in B_\delta(\mu)}  \left( m+\frac{1}{\alpha}\int_{\mathcal{S}} \left( \langle z,x \rangle - m \right)^+ \nu(dx)\right). 
               \end{equation*}
Note that the function $x\mapsto x^+$ is weakly differentiable with weak derivative $\mathbf{1}_{\{x\ge 0\}}$. In conclusion $f(x,(z, m))=m+\frac{1}{\alpha} \left( \langle z, x \rangle - m \right)^+$ has weak derivative $$g(x,(z,m))=\frac{1}{\alpha} \mathbf{1}_{\{\langle z, x \rangle - m\ge 0\}},$$
which is continuous at $(x, (h^\opt, m^\opt))$ except on the lower-dimensional set $\{x\in\mathcal{S}\ : \ \langle z^\opt, x \rangle - m^\opt=0\}$, which is in particular a Lebesgue null set. Remark \ref{rk:abscont} thus yields
        \begin{align*}
            &V'(0) = |z^\opt|\left(\frac{1}{\alpha^q}\int_{\mathcal{S}} \mathbf{1}_{\left\{ \langle z^\opt  x\rangle \geq  \mathrm{V@R}_{\alpha}  \left( \langle z^\opt ,X\rangle  \right) \right\}}\mu(dx)\right)^{\frac{1}{q}} = \frac{| z^\opt |}{\alpha^{1/p}}
            \end{align*}
            and thus 
            \begin{align*}
            V(\delta) &= \text{AV@R}_{\alpha}\left( \langle z^\opt , X\rangle \right) + \frac{ |z^\opt |}{\alpha^{1/p}}\delta + o(\delta).
        \end{align*}
  Comparing with \cite[Table 1]{Bartl:2019hk}, we see that this approximation is actually exact for $p=1,2$.
\end{example}

We now mention two extensions of Theorem \ref{thm:main}. The first one concerns the derivative of $V(\delta)$ for $\delta>0$. 

\begin{corollary}
\label{Cor:sensitivity}
Fix $r>0$ and in addition to the assumptions of Theorem \ref{thm:main}  assume that 
\begin{itemize}
\item  $\Aoptim{r+\delta}\neq\emptyset$ for $\delta\geq 0 $ small enough and for every sequence $(\delta_n)_{n\in \N}$ such that $\lim_{n\to \infty} \delta_n=0$ and $(a^\opt_n)_{n\in \N}$ such that $a^\opt_n\in \Aoptim{r+\delta_n}$ there is a subsequence which converges to some $a^\opt \in \Aoptim{r}$.
\item there exists $\varepsilon>0$ such that for all $\gamma>0$ and every $a\in\mathcal{A}$ with $|a|\le \gamma$ one has $|\nabla_x f(x,a)|\leq c(1+|x|^{p-1-\varepsilon})$ for all $x\in\mathcal{S}$ and some constant $c>0$.
\end{itemize}
Then
    \[ V'(r+)=\lim_{\delta\to 0}\frac{V(r+\delta)-V(r)}{\delta}= \inf_{a^\opt\in \Aoptim{r}} 
    \sup_{\nu \in \Boptim_r(\mu,a^\opt)}\left( \int_{\mathcal{S}} \|\nabla_x f(x,a^\opt)\|^q\,\nu(dx)\right)^{1/q},\]
where we recall that $\Boptim_r(\mu,a^\opt)$ is the set of all $\nu\in B_r(\mu)$ for which $\int_{\mathcal{S}} f(x,a^\opt)\,\nu(dx)=V(r)$.
\end{corollary}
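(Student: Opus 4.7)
The plan is to prove the corollary in two steps: first establish a pointwise-in-$a$ derivative formula, then combine using the inf-structure. Specifically, I would show that for each fixed $a\in\mathcal{A}$,
\[g_a'(r+) := \lim_{\delta\to 0}\frac{V(r+\delta,a)-V(r,a)}{\delta} = \sup_{\nu^\opt\in\Boptim_r(\mu,a)} \Big(\int \|\nabla_x f(x,a)\|^q\,\nu^\opt(dx)\Big)^{1/q},\]
and the corollary then reduces to showing $V'(r+)=\inf_{a^\opt\in\Aoptim{r}} g_{a^\opt}'(r+)$.

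For the ``$\geq$'' direction of the pointwise formula, I would fix any $\nu^\opt\in\Boptim_r(\mu,a)$ and run the proof of Theorem \ref{thm:main} verbatim but with the base measure $\mu$ replaced by $\nu^\opt$: the candidate coupling $\pi^\delta=[x\mapsto(x,x+\delta T^{\nu^\opt}(x))]_\ast\nu^\opt$, with $T^{\nu^\opt}$ defined as in that proof with $\nu^\opt$ in place of $\mu$, has second marginal in $B_\delta(\nu^\opt)$, hence by the triangle inequality in $B_{r+\delta}(\mu)$. Since $V(r,a)=\int f(x,a)\,\nu^\opt(dx)$ by optimality of $\nu^\opt$, the computation of Theorem \ref{thm:main} carries over, yielding the bound for this $\nu^\opt$; taking the supremum over $\nu^\opt\in\Boptim_r(\mu,a)$ gives ``$\geq$''. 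For the ``$\leq$'' direction, I would use McCann displacement interpolation: any $\nu\in B_{r+\delta}(\mu)$ can be written as $\nu\in B_\delta(\tilde\nu)$ for some $\tilde\nu\in B_r(\mu)$, by pushing an optimal coupling between $\mu$ and $\nu$ forward under the time-$\tfrac{r}{r+\delta}$ displacement map and using the constant-speed geodesic property of $W_p$. Applying the ``$\leq$'' part of Theorem \ref{thm:main} with $\tilde\nu$ as the base then bounds $\int f(y,a)\,\nu(dy)$, and the strengthened growth $|\nabla_x f|\le c(1+|x|^{p-1-\varepsilon})$ makes this bound uniform in $\tilde\nu\in B_r(\mu)$ via uniform $p$-integrability. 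Subtracting $V(r,a)$ and noting that only $\tilde\nu$ in a $W_p$-neighbourhood of $\Boptim_r(\mu,a)$ contributes to leading order, since off that set $\int f(x,a)\,\tilde\nu(dx)-V(r,a)$ stays bounded away from zero by a negative constant, localises the supremum on $\Boptim_r(\mu,a)$ and gives ``$\leq$''.

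To combine over $a$, the upper bound is immediate: for any $a^\opt\in\Aoptim{r}$, $V(r+\delta)-V(r)\le V(r+\delta,a^\opt)-V(r,a^\opt)$, so dividing by $\delta$ and passing to the limit gives $\limsup\le g_{a^\opt}'(r+)$; the infimum over $a^\opt\in\Aoptim{r}$ yields the desired ``$\leq$''. For the lower bound, I pick $a_\delta^\opt\in\Aoptim{r+\delta}$, which by the first added assumption admits a subsequence converging to some $a^\opt\in\Aoptim{r}$. Since $V(r)\le V(r,a_\delta^\opt)$, we have $V(r+\delta)-V(r)\ge V(r+\delta,a_\delta^\opt)-V(r,a_\delta^\opt)$, and the Step 1 lower-bound argument, applied with $a=a_\delta^\opt$ and with a suitably chosen $\nu^\opt\in\Boptim_r(\mu,a^\opt)$ (which is near-optimal for $V(r,a_\delta^\opt)$ up to $o(\delta)$ by continuity of $a\mapsto\int f(x,a)\,\nu^\opt(dx)$), delivers the matching ``$\geq$''.

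The main obstacle will be the uniformity and continuity issues linking the two limits $\delta\to 0$ and $a_\delta^\opt\to a^\opt$: the remainder $o(\delta)$ in Step 1 must be uniform in the base measure so that substituting $a_\delta^\opt$ for a fixed $a$ does not destroy the expansion, and the residual $V(r,a_\delta^\opt)-\int f(x,a_\delta^\opt)\,\nu^\opt(dx)\ge 0$ must be shown to be $o(\delta)$ for an appropriately chosen $\nu^\opt\in\Boptim_r(\mu,a^\opt)$. Both points rely critically on the strict gap $\varepsilon>0$ in the polynomial growth condition, which, together with standard de la Vall\'ee-Poussin-type arguments for the $W_p$-topology, yields uniform convergence of the relevant integrals along $W_p$-convergent sequences of measures.
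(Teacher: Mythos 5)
Your plan is, in essence, the paper's proof reorganized. The paper's remark immediately after the corollary even endorses your decomposition (pointwise-in-$a$ derivative formula plus an envelope-theorem step) as the right way to read the statement, and the technical ingredients you invoke — transport maps of the form $x\mapsto x+\delta T(x)$, the triangle inequality $B_\delta(\nu^r)\subset B_{r+\delta}(\mu)$, and the strengthened growth $|\nabla_xf|\le c(1+|x|^{p-1-\varepsilon})$ giving room for compactness/uniform integrability in a weaker Wasserstein topology — are exactly the ones used in the paper. The one organizational difference worth flagging in the ``$\leq$'' half is that the paper does not uniformize over all $\tilde\nu\in B_r(\mu)$ via McCann interpolation: it instead fixes $a^r\in\Aoptim{r}$, takes the \emph{optimal} measure $\nu^{r+\delta}\in\Boptim_{r+\delta}(\mu,a^r)$, estimates $V(r+\delta)-V(r)$ by a coupling in $C_\delta(\nu^{r+\delta})$, and then uses the $W_{p-\varepsilon}^{|\cdot|}$-compactness of $B_{r+1}(\mu)$ (Lemma \ref{lem:ball.compact}) to extract a subsequential limit $\tilde\nu^r$ which one then checks lies in $\Boptim_r(\mu,a^r)$. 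This single-sequence argument is lighter than uniformizing and then localizing onto $\Boptim_r(\mu,a)$, and avoids having to justify the claim that ``off a $W_p$-neighbourhood of $\Boptim_r(\mu,a)$, $\int f\,d\tilde\nu-V(r,a)$ is bounded away from $0$''; you should consider adopting it.

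The genuine gap, which you yourself flag in your last paragraph, is the lower-bound combination. You need the residual $V(r,a^\opt_\delta)-\int f(x,a^\opt_\delta)\,\nu^\opt(dx)$ to be $o(\delta)$ for a \emph{fixed} $\nu^\opt\in\Boptim_r(\mu,a^\opt)$ while $a^\opt_\delta\to a^\opt$. Uniform integrability/de la Vall\'ee-Poussin arguments will not close this: the residual is controlled by how fast $a^\opt_\delta$ converges to $a^\opt$ (roughly, by $\int[f(x,a^\opt_\delta)-f(x,a^\opt)]\,\nu^\opt(dx)$ together with the curvature of $a\mapsto V(r,a)$ at its minimum), and nothing in the hypotheses bounds $|a^\opt_\delta-a^\opt|$ against $\delta$. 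The paper's own proof at the analogous point asserts ``(trivially) $V(r)\le\int f(x,a^{r+\delta})\,\nu^r(dx)$'' for an \emph{arbitrary} $\nu^r\in\Boptim_r(\mu,a^r)$; unwinding this is exactly the claim that $a^r$ minimizes $a\mapsto\int f(x,a)\,\nu^r(dx)$, which holds when $(a^r,\nu^r)$ is a minimax saddle but is not obvious for every $\nu^r\in\Boptim_r(\mu,a^r)$. So the subtlety is real and is not really of uniform-integrability type; if you want a clean argument, the natural route is to work with $\nu^r_\delta\in\Boptim_r(\mu,a^\opt_\delta)$ (so the residual vanishes identically, since then $V(r)\le V(r,a^\opt_\delta)=\int f(x,a^\opt_\delta)\,\nu^r_\delta(dx)$), pass to a subsequential limit $\tilde\nu^r\in\Boptim_r(\mu,a^r)$ via compactness, and then argue that the resulting single lower bound already dominates the supremum over $\Boptim_r(\mu,a^r)$ — this last step is where the remaining care lies.
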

\begin{remark}
 Recall the notation $V(\delta,a)$ in \eqref{eq. minmax}.  
 Inspecting the proof of the above Corollary, it is clear the main difficulty is in showing that 
\[ \lim_{\delta\to 0}\frac{V(r+\delta,a)-V(r,a)}{\delta}=
    \sup_{\nu \in \Boptim_r(\mu,a)}\left( \int_{\mathcal{S}} \|\nabla_x f(x,a)\|^q\,\nu(dx)\right)^{1/q}.\]
In this way, the final statement of Corollary \ref{Cor:sensitivity}, or indeed of Theorem \ref{thm:main}, can be interpreted as an instance of the envelope theorem.
\end{remark}
 The second extension of Theorem \ref{thm:main} offers a more specific sensitivity result by including additional constraints on the ball $B_{\delta}(\mu)$ of measures considered. Let $m\in\mathbb{N}$ and let $\Phi=(\Phi_1,\dots,\Phi_m)\colon\mathcal{S}\to\mathbb{R}^m$ be a family of $m$ functions and assume that $\mu$ is calibrated to $\Phi$ in the sense that $\int_{\mathcal{S}} \Phi(x)\,\mu(dx)=0$.
Consider the set 
\[B^\Phi_\delta(\mu):=\left\{ \nu\in B_\delta(\mu) : \int_{\mathcal{S}} \Phi(x)\,\nu(dx)=0  \right\} \]
and the corresponding optimization problem
\begin{align*}
	V^{\Phi}(\delta)\coloneq\inf_{a\in\mathcal{A}}\sup_{\nu \in B^\Phi_{\delta}\left( \mu \right)}\int_{\mathcal{S}} f\left( x,a \right)\,\nu(dx).
\end{align*}
We have the following result.

\begin{theorem}[Sensitivity of $V(\delta)$ under linear constraints]
\label{thm.constraints}
	In addition to the assumptions of Theorem \ref{thm:main}, assume that there is some small $\varepsilon>0$ such that for every $a\in\mathcal{A}$ one has $|f(x,a)|\leq c(1+|x|^{p-\varepsilon})$ for all $x\in\mathbb{R}^d$ and some constant $c>0$.
	Further assume that $\Phi_i$, $i\leq m$,  are continuously differentiable with $|\Phi_i(x)|\leq c(1+|x|^{p-\varepsilon})$, $|\nabla_x \Phi_i(x)|\leq c(1+|x|^{p-1})$ and that the  non-degeneracy condition
	\begin{align}\label{eq:non-degenerate}
	\inf\left\{ \int_{\mathcal{S}} \bigg\| \sum_{i=1}^m \lambda_i \nabla_x\Phi_i(x) \bigg\|^q\,\mu(dx) : \lambda\in \R^d,\ |\lambda|=1\right\}>0
	\end{align}
	holds. Then
	\begin{align*}
	(V^{\Phi})'(0)= \inf_{a^\opt\in \Aoptim{0}}\inf_{\lambda\in\mathbb{R}^m}\left(\int_{\mathcal{S}} \Big\|\nabla_x f(x,a^\opt)+\sum_{i=1}^m \lambda_i\nabla_x\Phi_i(x)\Big\|^q\,\mu(dx) \right)^{1/q}.
	\end{align*}
\end{theorem}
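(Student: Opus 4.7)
The strategy is to dualize the linear constraints $\int \Phi_i \, d\nu = 0$ via Lagrange multipliers $\lambda \in \R^m$ and reduce to Theorem \ref{thm:main} applied to the shifted loss $f_\lambda(x,a) := f(x,a) + \sum_{i=1}^m \lambda_i \Phi_i(x)$. Two observations make this clean: since $\int \Phi_i \, d\mu = 0$ one has $\int f_\lambda(\cdot, a) \, d\mu = \int f(\cdot, a) \, d\mu$, so the unperturbed problem and its optimizer set $\Aoptim{0}$ are preserved when $f$ is replaced by $f_\lambda$; and the polynomial growth imposed on $\Phi_i$ and $\nabla_x \Phi_i$ together with the strengthened growth on $f$ guarantees that $f_\lambda$ verifies Assumption \ref{ass:main} for every $\lambda \in \R^m$.

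For the upper bound, observe that for every $\nu \in B^\Phi_\delta(\mu)$ the Lagrangian term adds zero, so $V^\Phi(\delta) \leq V^\lambda(\delta) := \inf_{a \in \cA} \sup_{\nu \in B_\delta(\mu)} \int f_\lambda \, d\nu$ for every $\lambda$. Applying Theorem \ref{thm:main} to each $V^\lambda$ gives $(V^\lambda)'(0) = \inf_{a^\opt \in \Aoptim{0}} \|\nabla_x f(\cdot, a^\opt) + \sum_i \lambda_i \nabla_x \Phi_i\|_{L^q(\mu)}$, and taking $\inf$ over $\lambda$ produces ``$\leq$'' in the claim. For the matching direction, fix $a^\opt \in \Aoptim{0}$ and let $F(\lambda) := \int \|\nabla_x f(\cdot, a^\opt) + \lambda \cdot \nabla_x \Phi\|^q \, d\mu$. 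This map is convex and continuous in $\lambda$, and the non-degeneracy \eqref{eq:non-degenerate} combined with the reverse triangle inequality in $L^q(\mu)$ renders it coercive, so its infimum is attained at some $\lambda^\opt$. Writing $g^\opt(x) := \nabla_x f(x, a^\opt) + \lambda^\opt \cdot \nabla_x \Phi(x)$, the first-order condition for $\lambda^\opt$ reads $\int \langle \nabla_x \Phi_i, h(g^\opt)\rangle \|g^\opt\|^{q-1} \, d\mu = 0$ for all $i$, where $h$ is as in Lemma \ref{dani:2}. Mirroring the lower-bound construction in the proof of Theorem \ref{thm:main}, set
\[T^\opt(x) := F(\lambda^\opt)^{1/q - 1} \, h(g^\opt(x)) \, \|g^\opt(x)\|^{q - 1}, \qquad \nu^\delta := [x \mapsto x + \delta T^\opt(x)]_\ast \mu.\]
Then $\nu^\delta \in B_\delta(\mu)$; Taylor-expanding $x \mapsto \Phi_i(x + \delta T^\opt(x))$ in $\delta$ and invoking the first-order condition gives $\int \Phi_i \, d\nu^\delta = o(\delta)$. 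An $o(\delta)$-correction $\tilde \nu^\delta$ turns $\nu^\delta$ into a genuine element of $B^\Phi_{\delta + o(\delta)}(\mu)$, and an identical Taylor expansion of $f(\cdot, a^\opt)$ along $T^\opt$---with the first-order condition absorbing the $\lambda^\opt \cdot \Phi$ contribution---delivers $\int f(\cdot, a^\opt) \, d\tilde \nu^\delta - V(0) = \delta F(\lambda^\opt)^{1/q} + o(\delta)$, which on taking $\inf$ over $a^\opt$ matches the upper bound.

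The principal obstacle is the explicit construction of the correction $\tilde \nu^\delta$: one must restore the $m$ linear constraints exactly without spoiling the Wasserstein bound beyond an $o(\delta)$ factor and without disturbing the leading-order value. A conceptually cleaner alternative would be to bypass this by a strong duality argument: apply a minimax theorem to the bilinear pairing $(\nu, \lambda) \mapsto \int (f + \lambda \cdot \Phi) \, d\nu$, using \eqref{eq:non-degenerate} to confine the infimum in $\lambda$ to a compact set and the narrow compactness of $B_\delta(\mu)$ for upper semicontinuity in $\nu$, so that $V^\Phi(\delta, a) = \inf_\lambda V^\lambda(\delta, a)$ holds exactly; the lower bound would then follow from the upper-bound analysis verbatim.
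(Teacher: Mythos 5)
Your upper bound is exactly the paper's: dualize the constraints, drop them, apply Theorem~\ref{thm:main} to the shifted loss $f_\lambda$, take the infimum over $\lambda$.

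Your primary lower-bound route is genuinely different from the paper's, and you are right that the obstacle you flag is a real gap. The explicit transport $T^\opt$ built from the first-order condition for $\lambda^\opt$ does kill the first-order constraint violation, so $\int \Phi_i\,d\nu^\delta = o(\delta)$, but the step that would turn $\nu^\delta$ into a genuine element of $B^\Phi_{\delta+o(\delta)}(\mu)$ requires an implicit-function-type argument in the space of measures to restore $m$ constraints exactly without disturbing either the Wasserstein radius or the value to leading order. As written, the argument is incomplete precisely there.

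The alternative route you sketch in your last paragraph is essentially the paper's proof, but the claim that once $V^\Phi(\delta,a)=\inf_\lambda V^\lambda(\delta,a)$ holds ``the lower bound would then follow from the upper-bound analysis verbatim'' is wrong. After the minimax exchange you are faced with
\[
\liminf_{\delta\to 0}\frac{1}{\delta}\Bigl(\inf_{\lambda}V^\lambda(\delta,a^\opt_\delta)-V^\Phi(0)\Bigr),
\]
and the interchange $\liminf_\delta\inf_\lambda\ge\inf_\lambda\liminf_\delta$ you would need goes in the \emph{wrong} direction; in general only $\liminf_\delta\inf_\lambda\le\inf_\lambda\liminf_\delta$ is automatic. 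The paper closes this by extracting, from the Lagrangian minimax, a minimizer $\lambda^\opt_\delta$ for each small $\delta$, then proving $(\lambda^\opt_\delta)_{\delta>0}$ is uniformly bounded. That boundedness is exactly where the non-degeneracy condition \eqref{eq:non-degenerate} enters, combined with the elementary estimate $\sup(A+B)\ge\sup A+\inf B$ which separates off a term that grows linearly in $|\lambda^\opt_\delta|$ while the other is bounded. Only after passing to a convergent subsequence $\lambda^\opt_\delta\to\lambda^\opt$ can one run the Theorem~\ref{thm:main} argument, and one must do so jointly in $\delta$, $a^\opt_\delta$ and $\lambda^\opt_\delta$, which is more than re-running the upper-bound computation for a fixed $\lambda$. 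You correctly identify non-degeneracy as the tool that confines $\lambda$ to a compact set, but without the boundedness-of-$\lambda^\opt_\delta$ step and the subsequent joint passage to the limit, the lower bound does not follow.
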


\begin{remark}\label{redudant}
	Note that if $\|\cdot\|$ is a norm and $\mu$ has full support, the above non-degeneracy condition \eqref{eq:non-degenerate} can be made without loss of generality. 
	Indeed, as the unit circle is compact and the function $\lambda \mapsto \int_\mathcal{S} \left\| \sum_{i=1}^m \lambda_i \nabla_x\Phi_i(x) \right\|^q\,\mu(dx)$ is continuous, the infimum in \eqref{eq:non-degenerate} is attained. 
	In particular, if 
	\begin{align*}
	\inf\left\{ \int_{\mathcal{S}} \left\| \sum_{i=1}^m \lambda_i \nabla_x\Phi_i(x) \right\|^q\,\mu(dx) : |\lambda|=1\right\}=0,
	\end{align*}
	then $\sum_{i=1}^m \lambda_i \nabla_x\Phi_i=0$ $\mu$-a.s.\ for some $\lambda$ in the unit circle. As $\mu$ has full support this implies that $\sum_{i=1}^m \lambda_i \nabla_x\Phi_i=0$ on $\mathcal{S}$. Thus $\nabla_x\Phi_1, \dots, \nabla_x\Phi_m$ are  linearly dependent functions on $\mathcal{S}$.
	Deleting all linearly dependent coordinates and calling the resulting vector $\tilde{\Phi}$, we have $V^\Phi(\delta)=V^{\tilde{\Phi}}(\delta)$ for every $\delta\geq 0$.
	Moreover, the non-degeneracy condition \eqref{eq:non-degenerate} holds for $\tilde{\Phi}$.
\end{remark}

\begin{remark}
We can relax the conditions of Theorem \ref{thm.constraints} in the spirit of Remark \ref{rk:abscont}: more specifically, assume that the baseline distribution $\mu$ is absolutely continuous w.r.t.\ the Lebesgue measure and $\| \cdot \|=|\cdot|$. Then Theorem \ref{thm.constraints} remains true if we merely assume that $f(\cdot,a)$ and $\Phi_i$ have a weak derivative (in the Sobolev sense) on $\interior{\cS}$ for all $a\in \mathcal{A}$ and replace $\nabla_x f(\cdot, a)$ and $\nabla \Phi_i$ by the weak derivative of $f(\cdot,a)$ and of $\Phi_i$ respectively. More concretely the assumption should read:
\begin{itemize}
\item The weak derivatives $(x,a)\mapsto  g(x,a)$ of $f(\cdot,a)$ and $x\mapsto g_i(x)$ of $\Phi_i$ are continuous at every point $(x,a)\in N\times \mathcal{A}^{\opt}(0)$, where $N$ is a Lebesgue-null set, and for every $r>0$ there is $c>0$ such that $|g_i(x,a)|\leq c(1+|x|^{p-1})$ and $|g_i(x)|\leq c(1+|x|^{p-1})$ for all $x\in\cS$, $i=1, \dots,m$  and $|a|\leq r$. 
	\end{itemize}
\end{remark}

\begin{example}[Martingale constraints]
\label{ex:martingale} Let $d=1$, $\mathcal{S}=\R$, $\|\cdot\|=|\cdot|$, $p=2$, and let $\Phi_1(x):=x-x_0$ and $\Phi:=\{\Phi_1\}$, i.e., $B^\Phi_\delta(\mu)$ corresponds to the measures $\nu \in B_{\delta}(\mu)$ satisfying the martingale (barycentre preservation) constraint $\int_{ \mathbb{R} } x\,\nu(dx)=x_0$. Clearly the assumptions on $\Phi$ of Theorem \ref{thm.constraints} are satisfied. It remains to solve the  optimization problem over $\lambda\in\mathbb{R}$ and plug in the optimizer. We then obtain
    \[ (V^{\Phi})'(0)
    =\inf_{a^\opt \in \Aoptim{0}} \left(\int_{\R} \left(\nabla_x f(x,a^\opt)-\int_{\R} \nabla_x f(y,a^\opt)\,\mu(dy)\right)^2\,\mu(dx) \right)^{1/2}, \]
	i.e., $ (V^{\Phi})'(0)$ is the standard deviation of $\nabla_x f(\cdot,a^\opt)$ under $\mu$. In line with the previous remark, this results extend to the case of the call option pricing discussed in the main body of the paper.	
\end{example}

\begin{example}[Covariance constraints]
\label{ex:covariance} Let $d=2$, $\mathcal{S}=\R^2$,  $\|\cdot\|=|\cdot|$, $p=2$.
   Further let $\Phi_1(x_1,x_2):=x_1 x_2-b$ for some $b\in \R$ and $\Phi:=\{\Phi_1\}$, i.e., we want to optimize over measures $\nu \in B_{\delta}(\mu)$ satisfying the covariance constraint $\int_{ \mathbb{R}^2} x_1x_2\,\nu(dx)=b$.  Assume that there exists no $\lambda\in \R\setminus\{0\}$ such that  $\mu$-a.s. $x_1=\lambda x_2$.
Clearly the assumptions on $\Phi$ of Theorem \ref{thm.constraints} are satisfied. 
Note that 
\begin{align*}
&\int_{\R^2} |\nabla_x f(x,a)+\lambda_1 \nabla_x \Phi_1(x)|^2\,\mu(dx)  \\
&=\int_{\R^2} (\nabla_{x_1} f(x,a)+\lambda_1x_2)^2+(\nabla_{x_2} f(x,a)+\lambda_1 x_1)^2\,\mu(dx),
\end{align*}
so in particular	 the optimal $\lambda$ in the definition of $(V^\Phi)'(0)$ is given by
$$\lambda_1=\frac{-\int_{\R^2} \nabla_{x_1}f(x,a)x_2+\nabla_{x_2}f(x,a)x_1\,\mu(dx)}{\int_{\R^2} x_1^2+x_2^2\,\mu(dx)}.$$
Plugging this in gives
\begin{align*}
&\int_{\R^2} |\nabla_x f(x,a)+\lambda_1 \nabla_x \Phi_1(x)|^2\,\mu(dx) =\int_{\R^2} (\nabla_{x_1}f(x,a))^2+(\nabla_{x_2}f(x,a))^2\,\mu(dx)\\
&\qquad\qquad+2\lambda_1 \int_{\R^2}( \nabla_{x_1}f(x,a)x_2+\nabla f_{x_2}f(x,a) x_1 )\mu(dx) +\lambda_1^2  \int_{\R^2} x_2^2+x_1^2\, \mu(dx)\\
& =\int_{\R^2} (\nabla_{x_1}f(x,a))^2+(\nabla_{x_2}f(x,a))^2\,\mu(dx)-\frac{\left(\int_{\R^2} (\nabla_{x_1}f(x,a) x_2+\nabla_{x_2}f(x,a)x_1)\,\mu(dx)\right)^2}{\int_{\R^2} (x_1^2+x_2^2)\,\mu(dx)}.
\end{align*}
It follows that 
   \begin{align*}
    (V^{\Phi})'(0)
    =\inf_{a^\opt \in \Aoptim{0}} \Bigg( &\int_{\R^2} |\nabla_{x}f(x,a^\opt)|^2\,\mu(dx)\\
    &-\frac{\big(\int_{\R^2}     \nabla_{x_1}f(x,a^\opt) x_2 +\nabla_{x_2}f(x,a^\opt)x_1
 \,\mu(dx)\big)^2}{\int_{\R^2} |x|^2 \,\mu(dx)}\Bigg)^{1/2}.   
 \end{align*}
\end{example}

\begin{example}[Calibration] 
Consider the function $f((T, K), a)= (E_{\P_a}[(S_T-K)^+]-C((T,K))^2$, the discrete measure $\mu$ formalises grid points for which option data $C(T,K)$ is available, $\cS\subset \R_+\times \R_+$ is the set of maturities and strikes of interest and $\{\P_a, a\in \cA\}$, for a given compact set $\cA$, is a class of parametric models (e.g., Heston). A Wasserstein ball around $\mu$ can then be seen as a plausible formalisation of market data uncertainty. 
Derivatives in $T$ and $K$ correspond to classical pricing sensitivities, which are readily available for most common parametric models. These have to be only evaluated for one model $\P_{a^\opt}.$ Changing the class of parametric models $\{\P_a, a\in A\}$ and computing the sensitivity in Theorem \ref{thm:main} could then yield insights into when a calibration procedure can be considered reasonably robust.  
\end{example}

\subsection{Proofs and auxiliary results related to Theorem \ref{thm:main}}

\begin{proof}[Proof of Theorem \ref{thm:main}] We present now a complete proof of Theorem \ref{thm:main} for general state space $\cS$ and semi-norm $\|\cdot \|$. All the essential ideas have already been outlined in \cite{main} but for the convenience of the reader we repeat all of the steps as opposed to only detailing where the general case differs from the one treated in \cite{main}.

\textbf{Step 1:} Let us first assume that $\mathcal{S}=\R^d$.
For every $\delta\geq 0$ let $C_{\delta}(\mu)$ denote those $\pi\in\mathcal{P}(\mathcal{S}\times \mathcal{S})$ which satisfy 
	$$ \pi_1=\mu \text{ and }
    \left(\int_{\mathcal{S}\times \mathcal{S}} \|x-y\|^p_\ast\,\pi(dx,dy)\right)^{1/p}\leq\delta.$$
    Note that the dual norm $\|\cdot\|_\ast$ is lower semicontinuous, which implies that the infimum in the definition of $W_p(\mu,\nu)$ is attained (see \cite[Theorem 4.1, p.43]{villani2008optimal}) one has $B_{\delta}(\mu)=\{ \pi_2 : \pi\in C_{\delta}(\mu)\}$.

    We start by showing the ``$\leq$'' inequality in the statement. 
    For any $a^\opt\in \Aoptim{0}$ one has $V(\delta)\leq\sup_{\nu\in B_\delta(\mu)} \int_{ \mathcal{S}} f(y,a^\opt)\,\nu(dy)$ with equality for $\delta=0$.
    Therefore, differentiating $f(\cdot,a^\opt)$ and using Fubini's theorem, we obtain that 
	\begin{align*}
	V(\delta)-V(0)
	&\leq \sup_{\pi\in C_\delta(\mu)}\int_{\mathcal{S}\times \mathcal{S}} f(y,a^\opt)-f(x,a^\opt)\, \pi(dx,dy)\\
	&= \sup_{\pi\in C_\delta(\mu)} \int_0^1 \int_{\mathcal{S}} \langle \nabla_x f(x+t(y-x),a^\opt),(y-x)\rangle\,\pi(dx,dy) dt.
	\end{align*}
	Now recall that $\langle x,y\rangle \leq \|x\| \|y\|_\ast$ for every $x,y\in\mathbb{R}^d$, whence for any $\pi\in C_\delta(\mu)$ and $t\in[0,1]$, we have that
	\begin{align*}
	&\int_{\mathcal{S}} \langle \nabla_x f(x+t(y-x),a^\opt),(y-x)\rangle \, \pi(dx,dy) \\
	&\leq \int_{\mathcal{S}} \| \nabla_x f(x+t(y-x),a^\opt)\|   \|y-x\|_\ast \,\pi(dx,dy) \\
	&\leq \Big( \int_{\mathcal{S}} \| \nabla_x f(x+t(y-x),a^\opt)\|^q  \, \pi(dx,dy) \Big)^{1/q} \Big( \int_{\mathcal{S}} \| y-x \|^p\, \pi(dx,dy) \Big)^{1/p},
	\end{align*}
	where we used H\"older's inequality to obtain the last inequality. 
	By definition of $C_\delta(\mu)$ the last integral is smaller than $\delta$ and we end up with 
	\[ V(\delta)-V(0)
	\leq \delta \sup_{\pi\in C_\delta(\mu)} \int_0^1 \Big(\int_{\mathcal{S}} \|\nabla_x f(x+t(y-x),a^\opt)\|^q\pi(dx,dy)\Big)^{1/q} dt.\]	
	It remains to show that the last term converges to the integral under $\mu$.
	To that end, note that any choice $\pi^\delta\in C_\delta(\mu)$ converges in $W_p^{|\cdot|}$ on $\cP(\mathcal{S}\times \mathcal{S}$) to the pushforward measure of $\mu$ under the mapping $x\to (x,x)$, which we denote $[x\mapsto (x,x)]_\ast\mu$. This can be seen by, e.g., considering the coupling $[(x,y)\mapsto (x,y,x,x)]_\ast\pi^\delta$ between $\pi^\delta$ and $[x\mapsto (x,x)]_\ast\mu$.
	Now note that, together with growth restriction on $\nabla_x f$ of Assumption \ref{ass:main}, $q=p/(p-1)$ implies 
	\begin{align}
	\label{eq:bound.nabla.f.q}
	\|\nabla_x f(x+t(y-x),a^\opt)\|^q\leq c(1+|x|^p+|y|^p)
	\end{align}
	for some $c>0$ and all $x,y\in\mathbb{R}^d$, $t\in[0,1]$.
	Recall that there furthermore exists $\tilde{c}>0$ such that $\|x\| \leq \tilde{c} |x|$, in particular $\int_\mathcal{S} \|\nabla_x f(x+t(y-x),a^\opt)\|^q\,\pi^\delta(dx,dy)\leq C$ for all $t\in[0,1]$ and small $\delta>0$, for another constant $C>0$.
	As Assumption \ref{ass:main} further yields continuity of $(x,y)\mapsto \|\nabla_x f(x+t(y-x),a^\opt)\|^q$ for every $t$, the $p$-Wasserstein convergence of $\pi^\delta$ to $[x\mapsto (x,x)]_\ast\mu$ implies that
	\[\int_{\mathcal{S}} \|\nabla_x f(x+t(y-x),a^\opt)\|^q\,\pi(dx,dy)
	\to\int_{\mathcal{S}} \|\nabla_x f(x,a^\opt)\|^q\,\mu(dx)\]
	for every $t\in[0,1]$, see Lemma \ref{lem:wasserstein.integrals.converge}.
	Dominated convergence (in $t$) then yields ``$\leq$'' in the statement of the theorem. 

	We turn now to the opposite ``$\geq$'' inequality. 
    As $V(\delta)\geq V(0)$ for every $\delta>0$ there is no loss in generality in assuming that the right hand side is not equal to zero.
    Now take any, for notational simplicity not relabelled, subsequence of $(\delta)_{\delta>0}$ which attains the liminf in $(V(\delta)-V(0))/\delta$ and pick $a^\opt_\delta\in \Aoptim{\delta}$. By the second part of Assumption \ref{ass:main}, for a (again not relabelled) subsequence, one has $a^\opt_\delta\to a^\opt\in \Aoptim{0}$.
    Further note that $V(0)\leq\int_\mathcal{S} f(x,a^\opt_\delta)\,\mu(dx)$ which implies 
    \begin{align*}
	V(\delta)-V(0)
	&\geq \sup_{\pi\in C_\delta(\mu)}\int_{\mathcal{S}\times \mathcal{S}} f(y,a^\opt_\delta)-f(x,a^\opt_\delta) \,\pi(dx,dy).
	\end{align*}
	By Lemma \ref{dani:2} there exists a function $h\colon \mathbb{R}^d\mapsto \{ x\in \R^d : \|x\|_\ast=1\}$ such that $\|x\|=\langle x, h(x) \rangle$ for every $x\in\mathbb{R}^d$. 
	Now define 
	\begin{align*}
	\pi^\delta&:=[x\mapsto (x,x+\delta T(x))]_\ast\mu, \quad\text{where} \\
	T(x)&:= \frac{h(\nabla_x f(x,a^\opt))}{\|\nabla_x f(x,a^\opt)\|^{1-q}} \Big(\int_{\mathcal{S}} \|\nabla_x f(z,a^\opt)\|^q\,\mu(dz)\Big)^{1/q-1}
	\end{align*}
	for $x\in\mathbb{R}^d$ with the convention $h(\cdot)/0=0$.
	Note that the integral is well defined since, as before in \eqref{eq:bound.nabla.f.q}, one has $\|\nabla_x f(x,a^\opt)\|^q\leq C(1+|x|^{p})$ for some $C>0$ and the latter is integrable under $\mu$.
	Using that $pq-p=q$ it further follows that
	\begin{align*}
	&\int_{\mathcal{S}\times \mathcal{S}} \|x-y\|_\ast^p\,\pi^\delta(dx,dy)
	=\delta^p\int_{\mathcal{S}} \|T(x)\|_\ast^p\,\mu(dx)\\
	&=\delta^p \frac{\int_{\mathcal{S}} \|\nabla_x f(x,a^\opt)\|^{pq-p} \,\mu(dx)}{\big(\int_{\mathcal{S}} \|\nabla_x f(z,a^\opt)\|^q\,\mu(dz)\big)^{p(1-1/q)}} 
	= \delta^p.
	\end{align*}
	In particular $\pi^\delta\in C_\delta(\mu)$ and we can use it to estimate from below the supremum over $C_\delta(\mu)$ giving
	\begin{align*}
	\frac{V(\delta)-V(0)}{\delta}
	&\geq\frac{1}{\delta}\int_{\mathcal{S}} f(x+\delta T(x),a^\opt_\delta)-f(x,a^\opt_\delta) \,\mu(dx)\\
	&=\int_0^1\int_{\mathcal{S}} \langle \nabla_x f(x+t\delta T(x),a^\opt_\delta),T(x)\rangle \,\mu(dx)\,dt.
	\end{align*}
	For any $t\in [0,1]$, with $\delta\to 0$, the inner integral converges to 
	\begin{align*}
\int_{\mathcal{S}} \langle \nabla_x f(x,a^\opt),T(x)\rangle \,\mu(dx) = \Big(\int_{\mathcal{S}} \|\nabla_x f(x,a^\opt)\|^q \,\mu(dx)\Big)^{1/q}.
	\end{align*}
	The last equality follows from the definition of $T$ and a simple calculation. 
	To justify the convergence, first note that 
	\[\langle \nabla_x f(x+t\delta T(x),a^\opt_\delta),T(x)\rangle \to \langle \nabla_x f(x,a^\opt),T(x)\rangle \]
	for all $x\in\mathbb{R}^d$ by continuity of $(a,x)\mapsto \nabla_x f(x,a)$ and since $a^\opt_{\delta} \to a^\opt$. 
	Moreover, as before in \eqref{eq:bound.nabla.f.q}, one has 
	\[ \|\langle \nabla_x f(x+t\delta T(x),a^\opt),T(x)\rangle \|\leq C(1+|x|^p)\]
	for some $C>0$ and all $t\in[0,1]$.
	The latter is integrable under $\mu$, hence convergence of the integrals follows from the dominated convergence theorem.\\
	
	\textbf{Step 2:} We now extend the proof to the case, where $\cS\subset \R^d$ is closed convex and its boundary has zero measure under $\mu$.\\
	Note that the proof of the ``$\le$"-inequality remains unchanged. We modify the proof of the ``$\ge$"-inequality as follows: let us first define 
	\[\mathcal{S}^\epsilon:=\{x\in \mathcal{S} \ :\ |x- z| \geq \epsilon \text{ for all }  z\in \mathcal{S}^c\}\]
	for all $\epsilon>0$, so that in particular $\bigcup_{\epsilon>0} \mathcal{S}^{\epsilon}=\interior{\cS}$.
	We now redefine
	$$\pi^\delta:=\left[x\mapsto \left(x,x+\delta T(x) \mathbf{1}_{\{x\in \mathcal{S}^{\sqrt{\delta}}\}} \mathbf{1}_{\{|T(x)|\le 1/\sqrt{\delta} \}}\right)\right]_\ast\mu.$$
	Then $\pi^{\delta}\in \mathcal{P}(\mathcal{S}\times \mathcal{S})$ and in particular $\pi^{\delta}\in C_{\delta}(\mu)$ as in Step 1. Noting that 
	\begin{align*}
	 \lim_{\delta \to 0} T(x) \mathbf{1}_{\{x\in \mathcal{S}^{\sqrt{\delta}}\}} \mathbf{1}_{\{|T(x)|\le 1/\sqrt{\delta}\}} = T(x)\mathbf{1}_{\{x\in \interior{\cS}\}},
	\end{align*}
	the remaining steps of the proof follow as in Step 1. This concludes the proof.
\end{proof}

\begin{lemma}
\label{lem:wasserstein.integrals.converge}
	Let $p\in[1,\infty)$, let $a_0\in \mathcal{A}$ and assume that $f$ is continuous and, for some constant $c>0$, satisfies $|f(x,a)|\leq c(1+|x|^p)$ for all $x\in\mathcal{S}$ and all $a$ in a neighborhood of $a_0$.
	Let $(\mu_n)_{n\in\mathbb{N}}$ be a sequence of probability measures which converges to some $\mu$ w.r.t.\ $W_p^{|\cdot|}$ and $(a_n)_{n\in\mathbb{N}}$ be a sequence which converges to $a_0$.
	Then $\int_{\mathcal{S}} f(x,a_n)\,\mu_n(dx)\to\int_{\mathcal{S}} f(x,a_0)\,\mu(dx)$ as $n\to\infty$.
\end{lemma}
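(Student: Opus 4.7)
The approach is a standard Vitali-type convergence argument, exploiting the well-known fact that convergence in $W_p^{|\cdot|}$ is equivalent to weak convergence together with convergence of $p$-th absolute moments (see, e.g., \cite[Thm.~6.9]{villani2008optimal}). The plan is to combine this with the uniform continuity of $f$ on compact sets and the growth bound, and handle the drift in the $a$-variable by a triangle inequality.

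First, I would write
\begin{align*}
\int_{\cS} f(x,a_n)\,\mu_n(dx) - \int_{\cS} f(x,a_0)\,\mu(dx) = \int_{\cS} \big[f(x,a_n)-f(x,a_0)\big]\,\mu_n(dx) + \Big(\int_{\cS} f(x,a_0)\,\mu_n(dx) - \int_{\cS} f(x,a_0)\,\mu(dx)\Big).
\end{align*}
The second bracket tends to zero directly from $W_p^{|\cdot|}$-convergence: the growth bound $|f(x,a_0)|\le c(1+|x|^p)$ combined with the characterization of Wasserstein convergence via weak convergence plus $p$-moment convergence implies convergence of integrals of all continuous functions with at most $p$-th order growth.

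For the first term, the key tool is that $(\mu_n)$ has uniformly integrable $p$-th moments, i.e.\ $\lim_{R\to\infty}\sup_n \int_{\{|x|>R\}} (1+|x|^p)\,\mu_n(dx)=0$, which again follows from convergence of $\int |x|^p\,\mu_n(dx)$ to $\int |x|^p\,\mu(dx)$. Fixing $\epsilon>0$, I would choose $R$ large enough so that $\sup_n \int_{\{|x|>R\}} 2c(1+|x|^p)\,\mu_n(dx)<\epsilon$, which controls the tail contribution to $\int|f(x,a_n)-f(x,a_0)|\,\mu_n(dx)$ using $|f(x,a_n)-f(x,a_0)|\le 2c(1+|x|^p)$ (valid for all $n$ large enough since $a_n\to a_0$). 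On the compact set $\{|x|\le R\}$, the continuous function $f$ is uniformly continuous in $(x,a)$ on the compact $\{|x|\le R\}\times \{a:|a-a_0|\le 1\}$, so $\sup_{|x|\le R}|f(x,a_n)-f(x,a_0)|\to 0$ as $n\to\infty$, giving a vanishing contribution from the bulk.

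No step is truly hard, but one subtlety to be careful about is that $\mu_n$ varies with $n$, so one cannot invoke the ordinary dominated convergence theorem directly; the correct substitute is the uniform-integrability bound arising from $W_p^{|\cdot|}$-convergence of the moments. A cleaner alternative that I would mention is to apply Skorokhod's representation to realize $X_n\sim\mu_n$ and $X\sim\mu$ on a common probability space with $X_n\to X$ almost surely and $\E|X_n|^p\to \E|X|^p$; continuity of $f$ gives $f(X_n,a_n)\to f(X,a_0)$ a.s., and the growth bound together with $\E|X_n|^p\to \E|X|^p$ yields uniform integrability of $(f(X_n,a_n))_n$ via Vitali's convergence theorem, concluding the proof.
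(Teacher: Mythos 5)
Your argument is correct, but it takes a genuinely different route from the paper's. The paper handles the joint dependence on $(x,a)$ in one stroke by lifting to the product space: it observes that $\mu_n\otimes\delta_{a_n}\to\mu\otimes\delta_{a_0}$ in $W_p^{|\cdot|}$ on $\mathcal{S}\times K$ (for a suitable compact neighborhood $K$ of $a_0$) and then cites \cite[Lemma~4.3, p.~43]{villani2008optimal} for convergence of integrals of continuous functions with $p$-th order growth against a $W_p$-convergent sequence of measures — a two-line proof. You instead split off the $a$-perturbation by a triangle inequality, treat the term $\int f(\cdot,a_0)\,\mu_n - \int f(\cdot,a_0)\,\mu$ exactly as in the cited Villani lemma, and handle $\int [f(\cdot,a_n)-f(\cdot,a_0)]\,\mu_n$ by hand via a truncation: uniform integrability of $p$-th moments (which, as you correctly note, follows from weak convergence plus convergence of the moments) controls the tails, while uniform continuity of $f$ on the compact $\{|x|\le R\}\times \bar K$ controls the bulk. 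This is a valid Vitali-type argument, and the Skorokhod-representation variant you sketch is equally sound. The paper's version is shorter and pushes the uniform-integrability machinery into the cited lemma via the product-measure trick; yours is more self-contained and makes the mechanism (truncation plus moment uniform integrability) explicit, at the cost of an extra decomposition. One small point to watch: you should take the compact in the $a$-variable to lie inside the neighborhood where the growth bound holds (rather than the fixed ball of radius $1$), but this is cosmetic since $a_n\to a_0$.
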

\begin{proof}
	Let $K$ be a small neighborhood of $a_0$ such that $|f(x,a)|\leq c(1+|x|^p)$ for all $x\in\mathcal{S}$ and $a\in K$.
	The measures $\mu_n\otimes \delta_{a_n}$ converge in $W_p^{|\cdot|}$ to the measure $\mu\otimes \delta_{a_0}$.
	As $\int_{\mathcal{S}} f(x,a_n)\,\mu_n(dx)=\int_{\mathcal{S}\times K} f(x,a)\,(\mu_n\otimes\delta_{a_n})(d(x,a))$ and similarly for $\mu\otimes\delta_{a_0}$, the claim follows from \cite[Lemma 4.3, p.43]{villani2008optimal}.
\end{proof}

The following lemma relates to the financial economics applications described in \cite{main}. We focus on a sufficient condition for the second part of Assumption \ref{ass:main}. For this, we assume that $\mu$ does not contain any redundant assets, i.e.\ $\mu(\{ x\in\mathbb{R}^d : \langle a , x-x_0\rangle >0\})>0$ for every $a\neq 0$. If $\mu$ satisfies this condition, we call it non-degenerate. Note that this condition is slightly stronger than no-arbitrage. However, if $\mu$ satisfies no arbitrage, then one can always delete the redundant dimensions in $\mu$ similarly to the remark after Theorem \ref{thm.constraints}, so that the modified measure satisfies $\mu(\{ x\in\mathbb{R}^d : \langle a , x-x_0\rangle >0\})>0$ for every $a\neq 0$.

\begin{lemma}
\label{lem:optimal.strategies.converge}
   Assume that $l\colon\R \to \R$ is convex, increasing, bounded from below and $f(x,a):= l(g(x)+\langle a,x\rangle)$ satisfies the first part of Assumption \ref{ass:main}.
   Furthermore assume that $\mu$ is non-degenerate in the above sense.
   Then for every $\delta\geq 0$ there exists an optimizer $a^\opt_\delta\in\mathbb{R}^d$ for $V(\delta)$, i.e.,
   \[V(\delta)=\sup_{\nu\in B_{\delta}(\mu)} \int_{\R^d} l(g(x)+\langle a^\opt_\delta, x-x_0 \rangle )\,\nu(dx)<\infty.\]
	Furthermore, if $l$ is strictly convex, the optimizer $a^\opt$ of $V(0)$ is unique and $a^\opt_\delta\to a^\opt$ as $\delta\to0$. In particular, Assumption \ref{ass:main} is satisfied.
\end{lemma}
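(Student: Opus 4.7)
The plan is to establish, in this order, three properties of $V(\delta,\cdot)$ which together give the lemma: convexity with coercivity (yielding existence of $a^\opt_\delta$), strict convexity when $l$ is (yielding uniqueness of $a^\opt$), and boundedness of any sequence of approximate optimisers (yielding convergence and the second part of Assumption \ref{ass:main}).

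For \emph{existence}, convexity of $V(\delta,\cdot)$ is immediate: $a\mapsto g(x)+\langle a,x-x_0\rangle$ is affine and $l$ is convex, so each integrand $a\mapsto f(x,a)$ is convex, each integral $a\mapsto\int f(x,a)\,\nu(dx)$ is convex, and the supremum over $\nu\in B_\delta(\mu)$ preserves convexity. Finiteness on $\R^d$ is forced by the polynomial growth built into the first part of Assumption \ref{ass:main} combined with the uniform $p$-th moment bound available on elements of $B_\delta(\mu)$. Since $\mu\in B_\delta(\mu)$ we have $V(\delta,a)\geq V(0,a)$, so coercivity of $V(\delta,\cdot)$ reduces to coercivity of $V(0,\cdot)$. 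Given $|a_n|\to\infty$, I would pass to a subsequence with $a_n/|a_n|\to\hat a$ on the unit sphere. Non-degeneracy yields $\mu(E)>0$ for $E:=\{x:\langle\hat a,x-x_0\rangle>0\}$, and on $E$ the argument $g(x)+\langle a_n,x-x_0\rangle$ tends to $+\infty$. Because $l$ is convex, increasing and bounded below, it is either constant (the lemma being then trivial) or satisfies $l(y)\to\infty$ as $y\to\infty$; in the latter case Fatou's lemma delivers $V(0,a_n)\to\infty$, which, combined with convexity and lower semicontinuity, yields an optimiser $a^\opt_\delta$.

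For \emph{uniqueness} under strict convexity of $l$, suppose $a_1\neq a_2$ both minimise $V(0,\cdot)$. Non-degeneracy applied to the unit vector $(a_1-a_2)/|a_1-a_2|$ produces a set of positive $\mu$-mass on which $\langle a_1-a_2,x-x_0\rangle\neq 0$; on that set strict convexity of $l$ makes $a\mapsto f(x,a)$ strictly convex, and this strict inequality survives integration against $\mu$, contradicting joint optimality of $a_1,a_2$.

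For \emph{convergence} and the verification of Assumption \ref{ass:main}, fix $\delta_n\downarrow 0$ and $a^\opt_n\in\Aoptim{\delta_n}$. The ``$\leq$'' part of Theorem \ref{thm:main} applied with the fixed action $a^\opt$ gives $\limsup_n V(\delta_n,a^\opt)\leq V(0)$; optimality of $a^\opt_n$ then yields $V(\delta_n,a^\opt_n)\leq V(\delta_n,a^\opt)$, while $V(\delta_n,a^\opt_n)\geq V(0,a^\opt_n)$ since $\mu\in B_{\delta_n}(\mu)$. Coercivity of $V(0,\cdot)$ forces $(a^\opt_n)$ to be bounded; extract a convergent subsequence with limit $\tilde a$. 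Continuity of $V(0,\cdot)$ (a finite convex function on $\R^d$) gives $V(0,\tilde a)\leq V(0)$, so $\tilde a\in\Aoptim{0}$, which is exactly the second clause of Assumption \ref{ass:main}. Under strict convexity of $l$, uniqueness from the previous paragraph forces $\tilde a=a^\opt$ and hence $a^\opt_\delta\to a^\opt$ along the whole sequence. The main obstacle in the scheme is the coercivity step: non-degeneracy is the correct pointwise hypothesis, but promoting it to integral coercivity requires ruling out the trivial case of constant $l$ and a careful use of Fatou's lemma leveraging the lower bound on $l$.
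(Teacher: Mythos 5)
Your proposal is correct and follows essentially the same strategy as the paper: polynomial growth gives finiteness, non-degeneracy combined with Fatou rules out escape to infinity of (near-)optimisers, and Wasserstein continuity of $\delta\mapsto V(\delta,a^\opt)$ forces the limit of $a^\opt_\delta$ to lie in $\Aoptim{0}$. The only packaging differences are cosmetic: you phrase the boundedness step as coercivity of the finite convex function $V(0,\cdot)$ and then invoke its continuity, whereas the paper works directly with a minimising sequence and closes the existence argument via Fatou; and you flesh out the uniqueness step (which the paper leaves as ``readily implies'') by applying non-degeneracy to the unit vector $(a_1-a_2)/|a_1-a_2|$ and noting that the resulting pointwise strict inequality survives integration on a set of positive $\mu$-mass. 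One small remark: citing the ``$\leq$'' half of Theorem \ref{thm:main} to get $\limsup_n V(\delta_n,a^\opt)\le V(0)$ is admissible (that inequality only uses the first part of Assumption \ref{ass:main}), but it is overkill; as in the paper, the bound $f(\cdot,a^\opt)\le c(1+|\cdot|^p)$ together with $W_p$-convergence of any $\nu_n\in B_{\delta_n}(\mu)$ to $\mu$ already gives it directly and avoids any appearance of circularity.
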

\begin{proof}
    The first statement is trivially true if $l$ is constant, so assume otherwise in the following.
    Moreover, note by the first part of Assumption \ref{ass:main} we have $V(\delta)<\infty$ for all $\delta\geq 0$.
	Now fix $\delta\geq 0$, and let $(a_n)_{n\in\mathbb{N}}$ be a minimizing sequence, i.e.\
	\[V(\delta)=\lim_{n\to\infty} \sup_{\nu\in B_{\delta}(\mu)} \int_{\R^d} l(g(x)+\langle a_n, x-x_0 \rangle )\,\nu(dx).\]
    If $(a_n)_{n\in\mathbb{N}}$ is bounded, then after passing to a subsequence there is a limit, and Fatou's lemma shows that this limit is a minimizer.
    It remains to argue why $(a_n)_{n\in\mathbb{N}}$ is bounded.
    Heading for a contradiction, assume that $|a_n|\to\infty$ as $n\to\infty$.
	After passing to a (not relabeled) subsequence, there is $\tilde{a}\in\mathbb{R}^d$ with $|\tilde{a}|=1$ such that $a_n/|a_n|\to \tilde{a}$ as $n\to\infty$.
	By our assumption we have $\mu(\{ x\in\mathbb{R}^d: \langle \tilde{a} ,x-x_0\rangle > 0 \})>0$.
	As  $l$ is bounded below this shows that
	\[\sup_{\nu \in B_{\delta}(\mu)} \int_{\R^d} l(g(x)+\langle a_n , x-x_0 \rangle ) \,\nu(dx)
	\geq \int_{\R^d} l(g(x)+\langle a_n , x-x_0 \rangle ) \,\mu(dx) \to \infty,\]
	as $n\to\infty$, a contradiction. 
	
    To prove the second claim note that strict convexity of $l$ readily implies that $V(0)$ admits a unique minimizer $a^\opt$.  
    Now, heading for a contraction, assume that there exists a subsequence $(\delta_n)_{n\in \N}$ converging to zero, such that $a^\opt_{\delta_n}$ does not converge to $a^\opt$. 
    The exact same reasoning as above shows that $(a^\opt_{\delta_n})_{n\in\mathbb{N}}$ is bounded, hence (possibly after passing to a not relabeled subsequence) there is a limit $\tilde{a}\neq a^\opt$. Using Fatou's lemma once more implies
    \begin{align*}
    V(0)
    &<\int_{\R^d} l(g(x)+\langle \tilde{a}, x-x_0 \rangle )\,\mu(dx) \\
    &\leq \liminf_{n\to \infty} \int_{\R^d} l(g(x)+\langle a^\opt_{\delta_n}, x-x_0 \rangle )\,\mu(dx) 
    \leq \liminf_{n\to \infty} V(\delta_n).
    \end{align*}
   	On the other hand, plugging $a^\opt$ into $V(\delta)$ implies
    \[\limsup_{n\to \infty} V(\delta_n)
    \leq \limsup_{n\to \infty} \sup_{\nu\in B_{\delta_n}(\mu) } \int_{\R^d} l(g(x)+\langle a^\opt, x-x_0 \rangle )\,\nu(dx)
    =V(0),\]    
    which follows from as $l(g(x)+\langle a^\opt, x-x_0 \rangle )\leq c(1+|x|^p)$ and that any $\nu_n\in B_{\delta_n}(\mu)$ converges in $W^{|\cdot|}_p$ to $\mu$ by definition.
    This gives the desired contraction.
\end{proof}

In analogy to the above result, the following summarizes simple sufficient conditions for the second part of Assumption \ref{ass:main}.
\begin{lemma}\label{lem:coercive}
Assume that either $\mathcal{A}$ is compact or that $a\mapsto V(0,a)$ is coercive, in the sense that $V(0,a_n)\to \infty$ if $|a_n|\to \infty$. Moreover, assume that $f$ is continuous, such that $f(x,a)\le c(1+|x|^p)$ for some $c\ge 0$. Then the second part of Assumption \ref{ass:main} is satisfied.
\end{lemma}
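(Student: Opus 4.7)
My plan is to split the argument into three steps: lower semicontinuity of $a \mapsto V(\delta, a)$, existence of a minimizer $a^\opt_\delta \in \Aoptim{\delta}$ for small $\delta$, and subsequential convergence of any $a^\opt_{\delta_n} \in \Aoptim{\delta_n}$ to some $a^\opt \in \Aoptim{0}$ as $\delta_n \to 0$.

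First I would establish lower semicontinuity of $V(\delta, \cdot)$. For every $\nu \in B_\delta(\mu)$ the triangle inequality for $W_p$ gives the moment bound $\left(\int |x|^p\,\nu(dx)\right)^{1/p} \le \delta + \left(\int |x|^p\,\mu(dx)\right)^{1/p}$. Combined with continuity of $f$ and the growth estimate $|f(x,a)| \le c(1+|x|^p)$ (read as a two-sided bound, needed to produce a $\nu$-integrable envelope on $f(\cdot, a)$ locally uniformly in $a$), dominated convergence yields continuity of $a \mapsto \int f(x,a)\,\nu(dx)$ for every fixed $\nu$. Taking the supremum over $\nu \in B_\delta(\mu)$ then produces lower semicontinuity of $V(\delta,\cdot)$ on $\mathcal{A}$.

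For existence, if $\mathcal{A}$ is compact the infimum of an LSC function on a compact set is attained. In the coercive case I would fix any $a_0 \in \mathcal{A}$ and observe that any minimizing sequence $(a_n)$ for $V(\delta,\cdot)$ satisfies $V(0, a_n) \le V(\delta, a_n) \to V(\delta) \le V(\delta, a_0) \le c\bigl(1+(\delta + \|\mu\|_p)^p\bigr)$, where $\|\mu\|_p := (\int |x|^p\,\mu(dx))^{1/p}$; coercivity of $V(0,\cdot)$ then confines $(a_n)$ to a compact set, and a convergent subsequence together with the established LSC produces an optimizer.

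Finally, for any $\delta_n \to 0$ and $a^\opt_n \in \Aoptim{\delta_n}$, the same uniform bound $V(0, a^\opt_n) \le V(\delta_n) \le c\bigl(1+(\delta_n + \|\mu\|_p)^p\bigr)$ (or plain compactness of $\mathcal{A}$) confines the sequence, so a subsequence $a^\opt_{n_k}$ converges to some $a^\opt \in \mathcal{A}$. To identify $a^\opt \in \Aoptim{0}$ I would combine LSC of $V(0,\cdot)$ with the chain $V(0, a^\opt_{n_k}) \le V(\delta_{n_k}, a^\opt_{n_k}) = V(\delta_{n_k}) \le V(\delta_{n_k}, \tilde a)$ for any fixed $\tilde a \in \Aoptim{0}$ and the limit $V(\delta_{n_k}, \tilde a) \to V(0, \tilde a) = V(0)$, which follows from Lemma \ref{lem:wasserstein.integrals.converge} applied to near-optimal $\nu^{\delta_{n_k}} \to \mu$ in $W_p$ (or directly from reverse Fatou together with convergence of $p$-th moments). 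The hard part is the semicontinuity step: the one-sided bound $f(x,a)\le c(1+|x|^p)$ by itself only gives upper semicontinuity of the individual integrals via reverse Fatou, so one really does need a two-sided reading (or at least an integrable lower envelope) to carry the argument through; once this is in place the remaining pieces are essentially bookkeeping.
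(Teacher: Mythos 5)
Your proposal mirrors the paper's own proof: establish lower semicontinuity of $V(\delta,\cdot)$ as a supremum of maps $a\mapsto\int f(x,a)\,\nu(dx)$, bound any minimizing sequence using $V(\delta,a)\ge V(0,a)$ together with compactness or coercivity, and identify a subsequential limit $a$ of $a^\opt_{\delta_n}$ as a minimizer of $V(0,\cdot)$ via that semicontinuity together with $V(\delta_n,\tilde a)\to V(0,\tilde a)$ for $\tilde a\in\Aoptim{0}$. Your caveat about the one-sided growth bound is a fair observation: the paper's proof simply asserts continuity of $a\mapsto\int f(x,a)\,\nu(dx)$, which (as you note) does require an integrable lower envelope on $f(\cdot,a)$, so the hypothesis should be read two-sidedly as $|f(x,a)|\le c(1+|x|^p)$, consistent with the bounds used elsewhere in the paper.
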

\begin{proof}
Let us first note that for fixed $\delta\ge 0$ the function $a\mapsto V(\delta, a)$ is lower semiconinuous as a supremum of continuous functions $a\mapsto \int f(x,a)\,\nu(dx)$ for $\nu\in B_\delta(\mu)$. Next we note that $\mathcal{A}^\opt(\delta)\neq \emptyset$. Indeed, if $\mathcal{A}$ is compact, this directly follows from lower semicontinuity of $a\mapsto V(\delta, a)$. Otherwise, the fact that $V(\delta, a)\ge V(0,a)$ for all $a\in \mathcal{A}$ and coercivity imply that any minimising sequence $(a_n)_{n\in \N}$ is bounded. Lastly, we show that any accumulation point of such a sequence is an element of $\mathcal{A}^\opt(0)$. By the above we can assume (by taking a subsequence without relabelling if necessary)  that $\lim_{n\to \infty} a_n=a\in \mathcal{A}$. If $a\notin \mathcal{A}^\opt$, then 
\begin{align*}
\liminf_{n\to \infty} V(\delta_n, a_n)\ge \lim_{n\to \infty} V(0, a_n)=V(0,a)>V(0,a^\opt)=\lim_{n\to \infty} V(\delta_n, a^\opt)
\end{align*}
for any $a^\opt\in \mathcal{A}^\opt(0)$. This contradicts $a_n \in \mathcal{A}^\opt(\delta_n)$ for all $n\in \N$ and concludes the proof.
\end{proof}

\begin{proof}[Proof of Corollary \ref{Cor:sensitivity}]
We start with the ``$\leq$"-inequality.
First, note that for any $\delta>0$, $a^r\in \Aoptim{r}$, and $\nu^{r+\delta}\in B_{r+\delta}^\opt(\mu,a^r)$, we have 
\begin{align*}
V(r+\delta)
&\leq V(r+\delta,a^r)
=\int_{\mathcal{S}} f(x,a^r)\,\nu^{r+\delta}(dx) , \\
V(r)
& \geq  \sup_{\nu \in B_{r}(\mu)\cap B_\delta(\nu^{r+\delta}) } \int_{\mathcal{S}} f(x,a^r)\,\nu(dx) .
\end{align*}
This implies that
\begin{align}
V(r+\delta)-V(r)
&\le \sup_{\pi \in C_{\delta}(\nu^{r+\delta})} \int_{\mathcal{S}\times \mathcal{S}} f(x,a^r)- f(y,a^r)\,\pi(dx,dy)
\nonumber \\
&= \sup_{\pi \in C_{\delta}(\nu^{r+\delta})} \int_0^1 \int_{\mathcal{S}\times \mathcal{S}} \langle \nabla_x f(y+t(x-y),a^r),(x-y) \rangle\,\pi(dx,dy)\,dt 
\nonumber \\
&\le \delta \sup_{\pi \in C_{\delta}(\nu^{r+\delta})} \int_0^1\left(\int_{\mathcal{S}\times \mathcal{S}}  \|\nabla_x f(y+t(x-y),a^r)\|^q\,\pi(dx,dy)\right)^{1/q}\,dt.
\label{eq:Vrplusdelta.something}
\end{align}
Note that the assumption $|\nabla_x f(x, a)|\le c (1+|x|^{p-1-\epsilon})$ implies $|\nabla_x f(x, a)|^q\le c (1+|x|^{\frac{p(p-1-\epsilon)}{p-1}})$ ( for some new constant $c$).
To simplify notation let us thus define $\tilde{\epsilon}=(p-1-\epsilon)/(p-1)<1$
and recall that $B_{r+1}(\mu)$ is compact w.r.t.\ $W^{|\cdot|}_{p\tilde{\epsilon}}$ by Lemma \ref{lem:ball.compact}, hence there is $\tilde{\nu}^{r}\in B_{r}(\mu)$ such that (after passing to a subsequence) $\nu^{r+\delta}\to \tilde{\nu}^{r}$ w.r.t.\ $W^{|\cdot|}_{p\tilde{\epsilon}}$ as $\delta\to0$.
The same arguments as in the proof of Theorem \ref{thm:main} show that \eqref{eq:Vrplusdelta.something} (divided by $\delta$) converges to $\left( \int_{\mathcal{S}} \left\| \nabla_x f(x,a^{r}) \right\|^q \tilde{\nu}^{r}(dx) \right)^{1/q}$ when $\delta\to0$.
So, to conclude the ``$\leq$''-part, all that is left to do is show that $\tilde{\nu}^r\in B_r^\opt(\mu,a^r)$, which follows as 
\[V(r)
\leq \lim_{\delta\to 0} V(r+\delta)
\leq \lim_{\delta\to 0} \int_{\mathcal{S}} f(x,a^{r})\nu^{r+\delta}(dx)
= \int_{\mathcal{S}} f(x,a^r)\, \tilde{\nu}^r(dx)
\leq V(r).\]

We now turn to the proof of the ``$\geq$"-inequality.
To that end, let $(a^{r+\delta})_{\delta>0}$ be a sequence of optimizers, i.e. $a^{r+\delta} \in \Aoptim{r+\delta}$ for all $\delta>0$. 
Then by assumption there exists $a^{r} \in \Aoptim{r}$ such that (after passing to a subsequence) $\lim_{\delta \to 0} a^{r+\delta}=a^{r}$. 
Let $\nu^r\in B_r^\opt(\mu, a^r)$ be arbitrary.
As  $B_{\delta}(\nu^r)\subset B_{r+\delta}(\mu) $ (by the triangle inequality) we have 
\begin{align*}
V(r+\delta)
&\geq \sup_{\nu\in B_{\delta}(\nu^r)} \int_{\mathcal{S}} f(x,a^{r+\delta})\,\nu(dx).
\end{align*}
As further (trivially) $V(r)\le \int_{\mathcal{S}} f(x,a^{r+\delta})\,\nu^r(dx)$ we conclude
\begin{align*}
\frac{V(\delta+r)-V(r)}{\delta}
&\ge\sup_{\nu\in B_{\delta}(\nu^r)} \frac{1}{\delta} \int_{\mathcal{S}} f(x,a^{r+\delta})\nu(dx)-\int_{ \mathcal{S}} f(x,a^{r+\delta})\nu^{r}(dx) \\
&\to  \left( \int_{\mathcal{S}}\left\|\nabla_x f(x,a^{r}) \right\|^q\nu^{r}(dx) \right)^{1/q},
\end{align*}
as $\delta\to0$, where the the last equality follows from the exact same arguments as presented int he proof of Theorem \ref{thm:main}.
As $\nu^r\in B_r^\opt(\mu, a^r)$ was arbitrary, the claim follows.
\end{proof}

\begin{proof}[Proof of Theorem \ref{thm.constraints}]
	We start by showing the easier estimate
    \begin{align}
    \label{eq:limsup.estimate.constraints}
   	\begin{split}
   	& \limsup_{\delta\to 0}\frac{V^\Phi(\delta)-V^\Phi(0)}{\delta}\\
    &\leq \inf_{a^\opt \in \Aoptim{0}}\inf_{\lambda \in\mathbb{R}^m} \left(\int_{\mathcal{S}} \Big\| \nabla_x f(x,a^\opt)+\sum_{i=1}^m \lambda_i\nabla_x\Phi_i(x)\Big\|^q \,\mu(dx)\right)^{1/q}.
    \end{split}
    \end{align}
    To that end, let $a^\opt\in \Aoptim{0}$ and $\lambda \in\mathbb{R}^m$ by arbitrary.
    Then $V^\Phi(0)=\int_{\mathcal{S}} f(x,a^\opt)+\sum_{i=1}^m \lambda_i\Phi_i(x)\,\mu(dx)$.
    Moreover, as $B_\delta^\Phi(\mu)\subset B_\delta(\mu)$, it further follows that  $V^\Phi(\delta)\leq \sup_{\nu\in B_\delta(\mu)}\int_{\mathcal{S}} f(y,a^\opt)+\sum_{i=1}^m \lambda_i\Phi_i(y)\,\nu(dy)$.
    Therefore \eqref{eq:limsup.estimate.constraints} is a consequence of Theorem \ref{thm:main} (applied to the function $\tilde{f}(x,a):=f(x,a^\opt)+\sum_{i=1}^m \lambda_i\Phi_i(x)$).
 
    To show the other direction, i.e.\ that
    \begin{align} 
    \label{eq:liminf.estimate.constraints}
   	\begin{split}
   	&\liminf_{\delta\to 0}\frac{V^\Phi(\delta)-V^\Phi(0)}{\delta}\\
	&\geq \inf_{a^\opt \in \Aoptim{0}}\inf_{\lambda \in\mathbb{R}^m} \left(\int_{\mathcal{S}\times \mathcal{S}} \Big\| \nabla_x f(x,a^\opt)+\sum_{i=1}^m \lambda_i\nabla_x \Phi_i(x)\Big\|^q \,\mu(dx)\right)^{1/q}.
	\end{split}
    \end{align}	
    pick a (not relabeled) subsequence of $(\delta)_{\delta>0}$ which converges to the liminf.
    For $a^\opt_\delta\in \Aoptim{\delta}$, there is another (again not relabeled) subsequence which converges to some $a^\opt\in \Aoptim{0}$.
    From now on stick to this subsequence.
    In a first step, notice that 
    \begin{align}
    V^{\Phi}(\delta)
    &=\sup_{\nu\in B_{\delta}(\mu)}\inf_{\lambda\in\mathbb{R}^m}\int_{\mathcal{S}}  f(y,a^\opt_\delta)+\sum_{i=1}^m \lambda_i \Phi_i(y) \,\nu(dy) \nonumber \\
    &=\inf_{\lambda\in\mathbb{R}^m}\sup_{\nu\in B_{\delta}(\mu)}\int_{\mathcal{S}} f(y,a^\opt_\delta)+\sum_{i=1}^m \lambda_i \Phi_i(y)  \,\nu(dy). \label{eq:formulaminimax}
    \end{align} 
    Indeed, this follows from a minimax theorem (see \cite[Cor.\ 2, p.\ 411]{min_max_terkelsen1972}) and appropriate compactness of $B_\delta(\mu)$ as stated in Lemma \ref{lem:ball.compact}.
    For notational simplicity let $\lambda^\opt_\delta$ be an optimizer for \eqref{eq:formulaminimax}.
    Then
	\begin{align}
	\label{eq:estimate.constraings.below}
	\begin{split}
   	&\frac{V^\Phi(\delta)-V^\Phi(0)}{\delta}\\
   	&\geq \frac{1}{\delta} \sup_{\pi\in C_\delta(\mu)}\int_{\mathcal{S}\times \mathcal{S}} f(y,a^\opt_\delta)-f(x,a^\opt_\delta)+\sum_{i=1}^m \lambda^\opt_{\delta,i}(\Phi_i(y)-\Phi_i(x)) \,\pi(dx,dy),
   	\end{split}
   	\end{align}
   	where we used that $V^\Phi(0)\leq\int_{\mathcal{S}}  f(x,a^\opt_\delta)+\sum_{i=1}^m\lambda^\opt_{\delta,i} \Phi_i(x)  \,\mu(dx)$.
   	Now, in case that $\lambda^\opt_\delta$ is uniformly bounded for all small $\delta>0$, after passing to a subsequence, it converges to some $\lambda^\opt$.
   	Then it follows from the exact same arguments as used in the proof of Theorem \ref{thm:main} that 
   	\begin{align*}
   	&\liminf_{\delta\to 0}
   	\frac{1}{\delta} \sup_{\pi\in C_\delta(\mu)}\int_{\mathcal{S}\times \mathcal{S}} f(y,a^\opt_\delta)-f(x,a^\opt_\delta)+\sum_{i=1}^m\lambda^\opt_{\delta, i}(\Phi_i(y)-\Phi_i(x)) \,\pi(dx,dy)\\
   	&\geq \Big(\int_{\mathcal{S}} \Big\| \nabla_x f(x,a^\opt)+\sum_{i=1}^m\lambda^\opt_i\nabla_x\Phi_i(x)\Big\|^q \,\mu(dx)\Big)^{1/q}
   	\end{align*}
   	which shows \eqref{eq:liminf.estimate.constraints}.
   	It remains to argue why $\lambda^\opt_\delta$ is bounded for small $\delta>0$.
    By \eqref{eq:estimate.constraings.below} and the estimate ``$\sup (A+B)\geq \sup A + \inf B$'' we have
    \begin{align*}
   \frac{V^\Phi(\delta)-V^\Phi(0)}{\delta}\
    &\geq \frac{1}{\delta} \sup_{\pi\in C_{\delta}(\mu)}\int_{\mathcal{S}\times \mathcal{S}} \sum_{i=1}^m \lambda^\opt_{\delta, i}(\Phi_i(y)-\Phi_i(x)) \,\pi(dx,dy) \\
    &\quad+\frac{1}{\delta}  \inf_{\pi\in C_{\delta}(\mu)}\int_{\mathcal{S}\times \mathcal{S}} f(y,a^\opt_\delta)-f(x,a^\opt_\delta)\,\pi(dx,dy).
    \end{align*}
    The second term converges to $-(\int_{\mathcal{S}}\|\nabla_x f(x,a^\opt)\|^q\,\mu(dx))^{1/q}$ (see the proof of Theorem \ref{thm:main}), in particular it is bounded for all $\delta>0$ small.
    On the other hand by \eqref{eq:non-degenerate} and continuity as well as growth of $x\mapsto \nabla_x\Phi_i(x)$, the first term is larger than $c|\lambda^\opt_\delta|$ for some $c>0$.
    By \eqref{eq:limsup.estimate.constraints} this implies that $(\lambda^\opt_\delta)_{\delta >0}$ must be bounded for small $\delta>0$.
\end{proof}

We have used the following lemma:
\begin{lemma}
\label{lem:ball.compact}
    Let $p,q\in[1,\infty)$ such that $q<p$ and let $\mu$ be a probability measure on $\mathcal{S}$.
    Then $p$-Wasserstein ball $B_\delta(\mu)$ is compact w.r.t.\ $W_{q}^{|\cdot|}$.
   \end{lemma}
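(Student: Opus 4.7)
The plan is to establish two things separately: relative sequential compactness of $B_\delta(\mu)$ in the $W_q^{|\cdot|}$-topology, and sequential closedness of $B_\delta(\mu)$ in that same topology. Because $W_q^{|\cdot|}$ is metrizable, these together give compactness. The backbone of the argument is a uniform $p$-th moment bound on $B_\delta(\mu)$, from which both halves will flow.

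First I would derive that uniform moment bound. Recall from the preliminaries that $|x|\le c\|x\|_\ast$ for some $c>0$, so every $\nu\in B_\delta(\mu)$ satisfies $W_p^{|\cdot|}(\mu,\nu)\le c\,W_p(\mu,\nu)\le c\delta$. Combining this with $\int_\mathcal{S}|x|^p\,\mu(dx)<\infty$ and applying Minkowski's inequality to an optimal coupling for $W_p^{|\cdot|}(\mu,\nu)$ yields $M:=\sup_{\nu\in B_\delta(\mu)}\int_\mathcal{S}|x|^p\,\nu(dx)<\infty$. Markov's inequality then gives tightness of $B_\delta(\mu)$, so Prokhorov's theorem ensures that any sequence in $B_\delta(\mu)$ admits a weakly convergent subsequence. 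Moreover, since $q<p$, the uniform $p$-th moment bound makes $\{|\cdot|^q\}_{\nu\in B_\delta(\mu)}$ uniformly integrable. By the standard characterization of Wasserstein convergence (e.g.\ \cite[Theorem 6.9]{villani2008optimal}), weak convergence together with uniform integrability of $|\cdot|^q$ upgrades automatically to convergence in $W_q^{|\cdot|}$. This gives relative sequential $W_q^{|\cdot|}$-compactness of $B_\delta(\mu)$.

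For sequential closedness, I would take $(\nu_n)_{n\in\mathbb{N}}\subset B_\delta(\mu)$ with $\nu_n\to\nu$ in $W_q^{|\cdot|}$. Then in particular $\nu_n\to\nu$ weakly, and the cost $(x,y)\mapsto\|x-y\|_\ast^p$ is nonnegative and lower semicontinuous on $\mathcal{S}\times\mathcal{S}$ (since $\|\cdot\|_\ast$ is LSC w.r.t.\ $|\cdot|$, as noted in the preliminaries). The functional $\nu\mapsto W_p(\mu,\nu)$ is therefore lower semicontinuous with respect to weak convergence, by the standard LSC theory of optimal transport with lower semicontinuous costs (\cite[Lemma 4.3]{villani2008optimal}). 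Hence $W_p(\mu,\nu)\le\liminf_n W_p(\mu,\nu_n)\le\delta$, so $\nu\in B_\delta(\mu)$.

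The only mildly delicate point is the lower semicontinuity of $W_p(\mu,\cdot)$ under weak convergence when the underlying cost is only LSC (not continuous) because the seminorm $\|\cdot\|_\ast$ need not be continuous; but this case is covered by the standard Kantorovich duality machinery for LSC costs, so no genuine obstacle arises. The remaining ingredients, tightness, uniform integrability of lower moments from bounded higher moments, and the weak-convergence/$W_q$ equivalence, are entirely routine.
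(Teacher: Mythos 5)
Your proof is correct and follows essentially the same route as the paper: tightness/Prokhorov for weak precompactness, lower semicontinuity of $W_p(\mu,\cdot)$ under weak convergence for closedness of the ball, and an upgrade from weak to $W_q^{|\cdot|}$-convergence using the uniform $p$-moment bound together with $q<p$. The only (cosmetic) difference is that you invoke uniform integrability of $|\cdot|^q$ directly via Villani's characterization, whereas the paper packages the same fact by applying Prokhorov once more to the tilted measures $\tilde\nu_n(dx)\propto|x|^q\,\nu_n(dx)$.
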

\begin{proof}
We recall that $\|\cdot\|_\ast$ is lower semicontinuous and there exists $c>0$ such that $|x|\le c\|x\|_\ast$ for all $x\in \mathbb{R}^d$.
    As $\int_\mathcal{S} |x|^p\, \mu(dx)<\infty$ by assumption, an application of Prokhorov's theorem shows that $B_{\delta}(\mu)$ is weakly precompact (recall the convention that we continuity is defined for $(\R^d,|\cdot|)$).
    Hence, for every sequence of measures $(\nu_n)_{n\in \N}$ in $B_{\delta}(\mu)$ there exists a subsequence, which we also call $(\nu_n)_{n\in \N}$ and a measure $\nu$ such that $\nu_n$ converges weakly to $\nu$.
    As $W_p$ is weakly lower semicontinuous (see \cite[Lemma 4.3, p.43]{villani2008optimal}), this implies $\nu\in B_{\delta}(\mu)$. 
	Applying the same argument to the tight sequence $(\tilde{\nu}_n)_{n\in\N}$ defined via 
	\[ \tilde{\nu}_n(dx):=  \frac{ |x|^q }{ \int_{\mathcal{S}} |y|^q\,\nu_n(dy)} \nu_n(dx)\]
	we conclude that there exists another subsequence of $(\nu_n)_{n\in \N}$ which also converges in $W_q^{|\cdot|}$. This concludes the proof.
\end{proof}

\section{Discussion, proofs and auxiliary results related to Theorem \ref{thm:sens}}

\subsection{Further discussion of Theorem \ref{thm:sens}}

We note that a natural way to compute the sensitivity of $a^\opt_\delta$ would be by combining Theorem \ref{thm:main} with chain rule and  differentiation of the function $V(a,\delta)$. This cannot however be rigorously justified as the following remark demonstrates.
\begin{remark}
Let us point out that it is not true that $a \mapsto V(a, \delta)$ is differentiable for $\delta>0$ under the sole assumption that $(x,a)\mapsto f(x,a)$ is sufficiently smooth and $\nabla_a^2 f\neq 0$.

To give an example, let $\mathcal{S}=\R$, $\|\cdot\|=|\cdot|$, $\mathcal{A}=\R$ and take $f(x,a):=ax+a^2$ and $\mu=\delta_0$. 
A quick computation shows $V(\delta,a)= \delta |a|+a^2$ (independently of $p$).
In particular $V(\delta)=0$ and $a^\opt_{\delta}=a^\opt=0$ for all $\delta> 0$ and  $a\mapsto V(\delta,a)$ is clearly not differentiable in $a=0$.
\end{remark}

Instead, we use a more involved argument, combining  differentiability of $a \mapsto V(0, a)$ with a Lagrangian approach. This however requires slightly stricter growth assumptions than the ones imposed in Assumption \ref{ass:main}, which are specified in Assumption \ref{ass:sens}.

\begin{example}\label{ex:LASSO} We provide detailed computations behind the square-root LASSO/Ridge regression example discussed in \cite{main}. 
We consider $\cA=\R^k$, $\cS=\R^{k+1}$. We fix norms $\|(x,y)\|=|x|_s$, $\|(x,y)\|_{\ast}=|x|_r\mathbf{1}_{\{y=0\}}+\infty \mathbf{1}_{\{y\neq 0\}}$, for some $s>1$, $1/s+1/r=1$ and $(x,y)\in \R^{k}\times \R$. We recall than then \eqref{eq:lasso} holds and we can apply our methodology for $f((x,y),a):=(y-\langle x, a \rangle)^2$. In general we have $$\nabla_{(x,y)} f((x,y), a^\opt)= (-2(y-\langle x, a^\opt \rangle) a^\opt,2(y-\langle x, a^\opt \rangle))$$ 
 $\nabla_a^2V(0, a^\opt)=2D$ and
\begin{align*}
\left( \int_{\R^{k+1}} \|\nabla_{(x,y)} f((x,y), a^\opt)\|^2\, \mu(dx, dy)\right)^{1/2}&=2|a^\opt|_s\left( \int_{\R^{k+1}} (y-\langle x, a^\opt \rangle)^2\, \mu(dx, dy)\right)^{1/2}\\
&= 2|a^\opt|_s \sqrt{V(0)}.
\end{align*}
Recalling the convention that $\nabla_{(x,y)} \nabla_a f\in \R^{k\times (d+1)}$ is given by
\begin{align*}
\begin{bmatrix}
\nabla_{x_1} \nabla_{a_1} f & \dots& \nabla_{x_d} \nabla_{a_1}f &  \nabla_{y} \nabla_{a_1}f\\
\nabla_{x_1}\nabla_{a_2} f& \dots & \nabla_{x_d} \nabla_{a_2}f &  \nabla_{y} \nabla_{a_2}f\\
\vdots & \vdots &\vdots &\vdots\\
\nabla_{x_1}\nabla_{a_k} f& \dots & \nabla_{x_d} \nabla_{a_k}f &  \nabla_{y} \nabla_{a_k}f
\end{bmatrix}
\end{align*}
we conclude 
\begin{align*}
\nabla_{(x,y)} \nabla_a f((x,y),a^\opt)= 2\left(-y\mathbf{I} + x(a^\opt)^T + (\mathbf{I}a^\opt)(\mathbf{I}x),-x\right),
\end{align*}
where $\mathbf{I}$ is the $k\times k$ identity matrix. 
Recall furthermore that $\int_{ \mathbb{R}^{k+1} } (y-\langle a^\opt,x\rangle )x_i\mu(dx,dy)=0$ for all $1\leq i\leq k$ and in particular $V(0)=\int_{ \mathbb{R}^{k+1}} (y^2-\langle a^\opt, x\rangle y)\mu(dx,dy)$. Set now 
$$h((x,y)):=(\text{sign}(x_1)\,|x_1|^{s-1}, \dots, \text{sign}(x_k)\,|x_k|^{s-1},0)\cdot |x|_s^{1-s}.$$  Then $\langle (x,y), h((x,y))\rangle =|x|_s$ and $|h(x,y)|_{r}=1$ for $(x,y)\in \cS\setminus U$. 
As $h$ does not depend on the last coordinate, we also write simply $h(x)$ for $h((x,y))$. 
As $q=2$ we have in particular
\begin{align*}
&\int_{\R^{k+1}} \nabla_{(x,y)}\nabla_a f((x,y), a^\opt) \frac{h(\nabla_{(x,y)} f((x,y), a^\opt))}{ \|\nabla_{(x,y)} f((x,y), a^\opt)\|^{-1}}\, \mu(dx, dy) \\
&=4\int_{\R^{k+1}} \big [-y\mathbf{I} +x(a^\opt)^T+(\mathbf{I}a^\opt)(\mathbf{I}x) \big] \,  h(-(y-\langle x, a^\opt \rangle)a^\opt)\, |a^\opt|_s |y-\langle x, a^\opt \rangle | \, \mu(dx, dy)\\
&=-4 |a^\opt|_s  \int_{\R^{k+1}} \big [-y\mathbf{I} +x(a^\opt)^T+(\mathbf{I}a^\opt)(\mathbf{I}x) \big] \, (y-\langle x, a^\opt \rangle) h(a^\opt) \, \mu(dx, dy)\, \\
&=4 |a^\opt|_s V(0)\, h(a^\opt).
\end{align*}
In conclusion
\begin{align*}
a^\opt_\delta \approx &\ a^\opt -\Big(\int_{\R^{k+1}} \|\nabla_{(x,y)} f((x,y),a^\opt)\|^2\,\mu(dx,dy)\Big)^{-1/2} (\nabla^2_a V(0,a^\opt))^{-1} \\
&\qquad\qquad\qquad\cdot  \int_{\R^{k+1}} \frac{\nabla_{(x,y)}\nabla_a f((x,y),a^\opt)\, h(\nabla_{(x,y)} f((x,y),a^\opt))}{\|\nabla_{(x,y)} f((x,y),a^\opt)\|^{-1}} \, \mu(dx,dy)\cdot \delta\\
&=a^\opt - \frac{1}{4|a^\opt|_s \sqrt{V(0)}} \, D^{-1}\, 4 |a^\opt|_s V(0)\, h(a^\opt)\cdot \delta \\
&=a^\opt-  \sqrt{V(0)} D^{-1}\, h(a^\opt)\cdot \delta.
\end{align*}
Let us now specialise to the typical statistical context and let $\mu=\mu_N$ equal to the empirical measure of $N$ data samples, i.e., $\mu_N=\frac{1}{N}\sum_{i=1}^N \delta_{(x_i,y_1)}$ for some points $x_1, \dots, x_N\in \R^d$ and $y_1,\ldots,y_N\in \R$. Let us write $x_i=(x_{i,1}, \dots, x_{i, d})$ and $X=(x_{i,j})_{i=1, \dots, N}^{j=1, \dots, d}$. Then in particular 
\begin{align*}
D=\int_{\R^{k+1}} xx^T\, \mu_N(dx,dy)= \frac{1}{N} X^TX
\end{align*}
and we recover the notation common in statistics. In particular, $a^\opt = (X^TX)^{-1} X^T y$. If we now assume that $X^TX=\mathbf{I}$ (and hence $D^{-1}=N \mathbf{I}$), then we can easily compute
\begin{align*}
V(0)&=\frac{1}{N}(y-Xa^\opt)^T(y-Xa^\opt)=\frac{1}{N}(y-XX^Ty)^T(y-XX^Ty)\\
&=\frac{1}{N}y^T(\mathbf{I}-XX^T)^T(\mathbf{I}-XX^T)y=\frac{1}{N}y^T(\mathbf{I}-XX^T-XX^T+XX^TXX^T)y\\
&=\frac{1}{N}y^T(\mathbf{I}-XX^T)y
\end{align*}
Note that, under the assumption that $\sum_{i=1}^N y_i=0$, $R^2$ is defined as
\begin{align*}
R^2&=1-\frac{y^T(\mathbf{I}-XX^T)y}{y^T y}=\frac{y^Ty-y^T(\mathbf{I}-XX^T)y}{y^Ty}=\frac{y^TXX^Ty}{y^Ty}.
\end{align*}
Thus in the case $s=1$ we have
\begin{align*}
a^\opt_\delta &\approx a^\opt-  \sqrt{V(0)} D^{-1}\, \text{sign}(a^\opt)\cdot \delta= a^\opt-\sqrt{N}\, \sqrt{y^Ty- y^TXX^Ty}\,\text{sign}(a^\opt)\cdot \delta\\
&= a^\opt-\sqrt{N}\, \sqrt{y^Ty}\,\sqrt{1- \frac{y^TXX^Ty}{y^Ty}}\,\text{sign}(a^\opt)\cdot \delta\\
&= a^\opt-\sqrt{N}\, |y|\,\sqrt{1-R^2}\,\text{sign}(a^\opt)\cdot \delta.
\end{align*}
Furthermore, in the case $s=2$ we have
\begin{align}\label{eq:jan}
a^\opt_\delta &\approx a^\opt -D^{-1}\frac{\sqrt{V(0)}}{|a^\opt|_2}a^\opt \delta= a^\opt\left(1 -N\frac{\sqrt{y^T(\mathbf{I}-XX^T)y}}{\sqrt{N}|a^\opt|_2 }\,\delta \right) \nonumber\\
&=  a^\opt\left(1 -\frac{\sqrt{N\,y^T(\mathbf{I}-XX^T)y}}{|a^\opt|_2 }\,\delta \right) .
\end{align}
We also have
\begin{align*}
|a|=\sqrt{\langle a^\opt, a^\opt \rangle} =\sqrt{y^T XX^Ty},
\end{align*}
so \eqref{eq:jan} simplifies to 
\begin{align*}
a^\opt_\delta &\approx a^\opt\left(1 -\frac{\sqrt{N\,y^T(\mathbf{I}-XX^T)y}}{\sqrt{y^T XX^Ty}}\,\delta \right) =a^\opt\left(1 -\delta \sqrt{N\left(\frac{y^Ty}{y^TXX^Ty}-1\right)} \right) \\
&= a^\opt\left(1 - \delta\sqrt{N \left(\frac{1}{R^2}-1\right)}\right).
\end{align*}
\end{example}

\begin{remark}
While $|\cdot|_1$ is not strictly convex, the above example can still be adapted to cover this case under the additional assumption, that $a^\opt$ has no entries which are equal to zero. Indeed, we note that $x\mapsto h(x, y)$ is continuous (even constant) at every point $x$ except if a component of $x$ is equal to zero. Thus the proof of Lemma \ref{lem:inf} still applies if we assume that $g$ has $\mu$-a.s. no components which are equal to zero instead of merely assuming that $g\neq 0$ $\mu$-a.s..
\end{remark}

\begin{example}\label{ex:OutOfSample} We provide further details and discussion to complement the out-of-sample error example in \cite{main}. First, we recall the remainder term obtained therein:
\begin{align*}
\Delta_N &:=\Big(\int |\nabla_x f(x,a^{\opt,N})|_s^q\,\mu_N(dx)\Big)^{\frac{1}{q}-1} \cdot \left(\int \nabla_a^2 f(x, a^{\opt,N})\,\mu_N(dx)\right)^{-1} \\
&\quad \cdot \int \frac{\nabla_{x}\nabla_a f(x,a^{\opt,N})\, h(\nabla_x f(x,a^{\opt,N}))}{|\nabla_x f(x,a^{\opt,N})|_s^{1-q}} \, \mu_N(dx)-(\nabla_a^2 V(0, a^{\opt}))^{-1}\Theta,\quad \textrm{where}\\
\Theta & := \Big(\int |\nabla_x f(x,a^\opt)|_s^q\,\mu(dx)\Big)^{\frac{1}{q}-1}\cdot 
   \int  \frac{\nabla_{x}\nabla_a f(x,a^\opt)\, h(\nabla_x f(x,a^\opt))}{|\nabla_x f(x,a^\opt)|_s^{1-q}} \, \mu(dx).
   \end{align*}
Recall that $\mu_N\to \mu$ in $W_p$ holds a.s. We suppose that Assumptions \ref{ass:main} and \ref{ass:sens} hold, and that for any $r>0$, there exists $c>0$ such that the following hold uniformly for all $|a|\leq r$: 
\begin{align}\label{eq:condit}
\begin{split}
\sum_{i=1}^k\Big| \nabla_{a}\nabla_{a_i} f(x,a)\Big| &\le c(1+|x|^{p}), \\
\Bigg| \frac{\nabla_{x}\nabla_a f(x,a^\opt)\, h(\nabla_x f(x,a^\opt))}{|\nabla_x f(x,a^\opt)|^{1-q}}\Bigg| &\le c(1+|x|^{p}).
\end{split}
\end{align}
Recall from \eqref{eq:M-est asymptotics} that we already know that $a^{\opt,N}\to a^{\opt}$ a.s. Under the above integrability assumption, Lemma \ref{lem:wasserstein.integrals.converge} gives
\begin{align*}
&\left| \int \nabla_{a_{i}}\nabla_{a_j} f(x, a^{\opt,N})\,\mu_N(dx) -\int \nabla_{a_{i}}\nabla_{a_j} f(x, a^{\opt})\,\mu(dx) \right|\to 0,
\end{align*}
with analogous convergence for the other two terms in $\Delta_N$. We conclude that $\Delta_N\to 0$ a.s. and that \eqref{eq:CLTfromOSE} and \eqref{eq:out-of-sample-error} hold.

We now show how the arguments above can be adapted to extend and complement \cite[Prop. 17]{anderson2019improving}. Therein, the authors study VRS$(\delta)$ which is the expectation over realisations of $\mu_N$ of 
\begin{align*}
\int f(x,a^{\opt,N})\,\mu(dx)-\int f(x,a^{\opt,N}_\delta)\,\mu(dx).
\end{align*}
If VRS$(\delta)>0$ then, on average, the robust problem offers an improved performance, i.e., finds a better approximation to the true optimizer $a^\opt$ than the classical non-robust problem. 
If we work with the difference above, then we look at first order Taylor expansion and obtain
\begin{align*}
V(0,a^{\opt,N}_\delta)-V(0,a^{\opt,N}) = \nabla_a V(0,a^{\opt,N})(a^{\opt,N}_\delta-a^{\opt,N})+o(|a^{\opt,N}_\delta-a^{\opt,N}|),
\end{align*}
which holds under the first condition in \eqref{eq:condit}. This can be compared with \cite[Lemma 1]{anderson2019improving} which was derived under a Lipschitz continuity assumption on $a\mapsto f(x,a)$. 
For the quadratic case of \cite[Prop. 17]{anderson2019improving} we have $f(x,a)=1/2 a^2-g(x)a$, where we took $d=1$ for notational simplicity. We then have $\nabla_x f(xa)=-g'(x)a,$ $\nabla_a^2 f(x,a)=1$ and $\nabla_{x}\nabla_a f(x,a)\nabla_x f(x,a) =(g'(x))^2 a$. Specialising \eqref{eq:Nsamples opt diff fin} to this setting, with $s=2$, gives
\begin{align*}
a^{\opt,N}_\delta-a^{\opt,N} 
&\approx -\left(\nabla_{a}^{2} V\left(0, a^{\opt}\right)\right)^{-1}\left(\int\left|\nabla_{x} f\left(x, a^{\opt}\right)\right|^{q} \mu_N(d x)\right)^{1/q-1}\cdot \int \frac{\nabla_{x} \nabla_{a} f\left(x, a^{\opt}\right) \nabla_{x} f\left(x, a^{\opt}\right)}{|\nabla_{x} f\left(x, a^{\opt}\right)|^{2-q}} \mu_N(d x)\\
&=-|a^{\opt}|^{1-q} \left(\int |g'(x)|^q\,\mu_N(dx)\right)^{1/q-1} \int \frac{(g'(x))^2 a^{\opt}}{|g'(x)a^{\opt}|^{2-q}}\,\mu_N(dx)\\
&= -\text{sign}(a^{\opt})\left(\int |g'(x)|^q\,\mu_N(dx)\right)^{1/q}.
\end{align*}
While our results work for $p>1$, see Remark \ref{rk:failforp1}, we can formally let $q\uparrow \infty$. The last term then converges to $-\text{sign}(a^{\opt})\|g'\|_{L^\infty(\mu)}$ which recovers \cite[Prop.\ 17]{anderson2019improving}, taking into account that $\text{sign}(a^{\opt})=\text{sign}\left(\int g(x)\,\mu(dx)\right).$

\end{example}

\subsection{Proofs and auxiliary results related to Theorem \ref{thm:sens}}

\begin{lemma}
\label{lem:grad_hedge_new}
	Let $f\colon\mathcal{S}\times\mathcal{A}\to\mathbb{R}$ be differentiable such that $(x,a)\mapsto \nabla_a f(x,a)$ is continuous, fix $a\in\interior{\cA}$, and assume that for some $\varepsilon>0$ we have that $|\nabla_x f(x,\tilde{a})|\leq c(1+|x|^{p-1-\varepsilon})$ and $|\nabla_a f(x,\tilde{a})|\leq c(1+|x|^{p-\varepsilon})$ for some $c>0$, all $x\in\mathcal{S}$ and all $\tilde{a}\in\mathcal{A}$ close to $a$. 
	Further fix $\delta\geq 0$ and recall that $\Boptim_\delta(\mu,a)$ is the set  of maximizing measures given the strategy $a$. 
	Then the (one-sided) directional derivative of $V(\delta, \cdot)$ at $a$ in the in direction $b\in\mathbb{R}^k$ is given by
	\[ \lim_{h\to 0} \frac{V(\delta,a+h b)-V(\delta,a)}{h}
	=\sup_{\nu\in \Boptim_\delta(\mu,a)}\int_{\mathcal{S}} \langle \nabla_a f(x,a), b\rangle\,\nu(dx) . \]
\end{lemma}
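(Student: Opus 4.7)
The plan is to prove the stated identity by matching liminf and limsup inequalities as $h\downarrow 0$ (the case $h\uparrow 0$ follows by replacing $b$ with $-b$). It is convenient to set
\[
\phi_h(x) := \frac{f(x,a+hb)-f(x,a)}{h} = \int_0^1 \langle \nabla_a f(x,a+shb), b\rangle\,ds,
\]
so that $\phi_h(x)\to\phi_0(x):=\langle \nabla_a f(x,a), b\rangle$ pointwise, with uniform bound $|\phi_h(x)|\le c|b|(1+|x|^{p-\varepsilon})$ for $h$ small. Integrating the growth of $\nabla_x f$ also gives $|f(x,a')|\le c(1+|x|^{p-\varepsilon})$ for $a'$ near $a$. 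By Lemma \ref{lem:ball.compact} the ball $B_\delta(\mu)$ is compact in $W_{p-\varepsilon/2}^{|\cdot|}$, and by Lemma \ref{lem:wasserstein.integrals.converge} combined with this growth of $f$, the map $\nu\mapsto\int f(x,a')\,\nu(dx)$ is continuous in this topology; hence $\Boptim_\delta(\mu,a')$ is non-empty for all such $a'$.

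For the liminf direction, I would fix any $\nu\in \Boptim_\delta(\mu,a)$ and use that $V(\delta,a)=\int f(x,a)\,\nu(dx)$ while $V(\delta,a+hb)\ge\int f(x,a+hb)\,\nu(dx)$ since $\nu\in B_\delta(\mu)$. Dividing by $h>0$ yields
\[
\frac{V(\delta,a+hb)-V(\delta,a)}{h} \ge \int \phi_h(x)\,\nu(dx),
\]
and the finite $p$-th moment of $\nu$ together with the dominating function $c|b|(1+|x|^{p-\varepsilon})$ lets dominated convergence send the right-hand side to $\int \phi_0(x)\,\nu(dx)$. Taking the supremum over $\nu\in \Boptim_\delta(\mu,a)$ gives the required lower bound.

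For the limsup direction, pick $\nu_h\in \Boptim_\delta(\mu,a+hb)$ for small $h>0$; using $V(\delta,a)\ge\int f(x,a)\,\nu_h(dx)$ produces
\[
\frac{V(\delta,a+hb)-V(\delta,a)}{h} \le \int \phi_h(x)\,\nu_h(dx).
\]
Fix a sequence $h_n\downarrow 0$ along which the limsup of this difference quotient is attained. Compactness extracts a subsequence (not relabelled) with $\nu_{h_n}\to\nu_0$ in $W_{p-\varepsilon/2}^{|\cdot|}$ for some $\nu_0\in B_\delta(\mu)$. A uniform moment bound $\sup_{\nu\in B_\delta(\mu)}\int|y|^p\,\nu(dy)<\infty$ (a consequence of the Wasserstein triangle inequality combined with $|\cdot|\le c\|\cdot\|_\ast$) together with the growth of $a\mapsto f(x,a)$ gives continuity of $V(\delta,\cdot)$ at $a$, so $V(\delta,a+h_nb)\to V(\delta,a)$; on the other hand $\int f(x,a+h_nb)\,\nu_{h_n}(dx)\to\int f(x,a)\,\nu_0(dx)$ by Lemma \ref{lem:wasserstein.integrals.converge} with moving parameter $a+h_nb\to a$, identifying $\nu_0\in\Boptim_\delta(\mu,a)$.

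The principal technical obstacle is the passage to the limit in $\int \phi_{h_n}(x)\,\nu_{h_n}(dx)$ with both integrand and measure moving simultaneously. I would decompose
\[
\int \phi_{h_n}\,d\nu_{h_n} - \int \phi_0\,d\nu_0 = \int (\phi_{h_n}-\phi_0)\,d\nu_{h_n} + \Big(\int \phi_0\,d\nu_{h_n}-\int \phi_0\,d\nu_0\Big);
\]
the second term vanishes by Lemma \ref{lem:wasserstein.integrals.converge} applied to the fixed integrand $\phi_0$, while for the first I would split at a ball of radius $R$, using continuity of $(x,a)\mapsto\nabla_a f(x,a)$ on compact sets to get uniform convergence $\phi_{h_n}\to\phi_0$ on $\{|x|\le R\}$, and using uniform integrability of $1+|x|^{p-\varepsilon}$ along $\nu_{h_n}$ (from $W_{p-\varepsilon/2}^{|\cdot|}$-convergence) to make the tail on $\{|x|>R\}$ arbitrarily small uniformly in $n$ when $R$ is large. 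Hence $\int\phi_{h_n}\,d\nu_{h_n}\to\int\phi_0\,d\nu_0 \le \sup_{\nu\in \Boptim_\delta(\mu,a)}\int \phi_0\,d\nu$; since the chosen sequence realises the limsup, this yields the required upper bound and completes the proof.
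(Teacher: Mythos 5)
Your proposal is correct and follows essentially the same route as the paper: same lower bound via $V(\delta,a+hb)\geq\int f(x,a+hb)\,d\nu$ for $\nu\in\Boptim_\delta(\mu,a)$, and same upper bound via extracting a $W_{q}^{|\cdot|}$-convergent subsequence of optimizers $\nu_h\in\Boptim_\delta(\mu,a+hb)$ (the paper uses $q=p-\varepsilon$, you use $q=p-\varepsilon/2$; either works) and identifying the limit as an element of $\Boptim_\delta(\mu,a)$. The only notable difference is that at the final joint-limit step (integrand and measure both moving) the paper simply cites Lemma~\ref{lem:wasserstein.integrals.converge}, whereas you reprove the relevant special case by splitting at a large ball and using uniform integrability; this is self-contained but not materially different.
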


\begin{proof}
	Fix $b\in\mathbb{R}^k$. 
	We start by showing that
	\begin{align}
	\label{eq:diff.parameter.geq.inequality}
	\liminf_{h\to 0} \frac{ V(\delta,a+hb)-V(\delta,a) }{h}
    &\geq \sup_{\nu\in \Boptim_\delta(\mu,a)}\int_{\mathcal{S}} \langle \nabla_a f(x,a), b\rangle\,\nu(dx).
	\end{align}
	To that end, let $\nu\in \Boptim_\delta(\mu,a)$ and $h>0$ be arbitrary.
	By definition of $\Boptim_\delta(\mu,a)$ one has $V(\delta,a)=\int_{\mathcal{S}} f(x,a)\,\nu(dx)$.
	Moreover $\Boptim_\delta(\mu,a)\subseteq B_\delta(\mu)$ implies that $V(\delta,a+hb)\geq \int_{\mathcal{S}} f(x,a+hb)\,\nu(dx)$.
	Note that the assumption $|\nabla_x f(x,\tilde{a})|\leq c(1+|x|^{p-1-\varepsilon})$  implies
    \begin{align*}
    |f(x, \tilde{a})-f(0, \tilde{a})|&=\left| \int_0^1 \langle \nabla_x f(tx, \tilde{a}) , x \rangle dt \right|\\
    &\le \int_0^1 c(1+|tx|^{p-1-\epsilon})|x| dt \le c(1+|x|^{p-\epsilon}\vee |x|).
    \end{align*}
	Therefore, by dominated convergence, one has 
    \begin{align*}  
    \liminf_{h\to 0} \frac{V(\delta,a+hb)-V(\delta,a)}{h}
    &\ge \liminf_{h\to 0} \int_{\mathcal{S}} \frac{ f(x, a+hb)-f(x,a) }{h} \,\nu(dx)\\
    &= \int_{\mathcal{S}} \lim_{h\to 0} \frac{ f(x, a+hb)-f(x,a) }{h}\,\nu(dx)\\
    &=\int_{\mathcal{S}} \langle \nabla_a f(x,a), b\rangle \,\nu(dx)
    \end{align*}
    and as $\nu\in \Boptim_\delta(\mu,a)$ was arbitrary, this shows \eqref{eq:diff.parameter.geq.inequality}.
    
    We proceed to show that
    \begin{align}
	\label{eq:diff.parameter.leq.inequality}
	\limsup_{h\to 0} \frac{ V(\delta,a+hb)-V(\delta,a) }{h}
    &\leq \sup_{\nu\in \Boptim_\delta(\mu,a)}\int_{\mathcal{S}} \langle \nabla_a f(x,a), b\rangle\,\nu(dx).
	\end{align}
	For every sufficiently small $h>0$ let $\nu^h\in B_\delta^\opt(\mu,a+hb)$ such that $V(\delta,a+hb)=\int_{\mathcal{S}} f(x,a+hb)\,\nu^h(dx)$.
	The existence of such $\nu^h$ is guaranteed  by Lemma \ref{lem:ball.compact}, which also guarantees that (possibly after passing to a subsequence) there is $\tilde{\nu}\in B_\delta(\mu)$ such that $\nu^h\to\tilde{\nu}$ in $W^{|\cdot|}_{p-\varepsilon}$.
	We claim that $\tilde{\nu} \in \Boptim_\delta(\mu,a)$.
	By Lemma \ref{lem:wasserstein.integrals.converge} one has
	\[ \lim_{h\to 0}V(\delta,a+hb)
	=\int_{\mathcal{S}} f(x,a)\,\tilde{\nu}(dx)
	\leq V(\delta,a).\]
	On the other hand, for any choice $\tilde{\nu}\in \Boptim_\delta(\mu,a)$ one has 
	\[\lim_{h\to 0} V(\delta,a+hb)
	\geq\lim_{h\to 0}\int_{\mathcal{S}}  f(x,a+hb)\,\tilde{\nu}(dx)
	=\int_{\mathcal{S}}  f(x,a)\,\tilde{\nu}(dx)
	=V(\delta,a).\]
	This implies $V(\delta,a)=\int_{\mathcal{S}} f(x,a)\,\tilde{\nu}(dx)$ and in particular $\tilde{\nu}\in \Boptim_\delta(\mu,a)$. 
	At this point expand 
	\[f(x,a+hb)=f(x,a)+\int_0^1 \langle \nabla_a f(x,a+thb),hb\rangle \,dt\]
	so that 
	\begin{align*}  
    &V(\delta,a+hb)-V(\delta,a)\\
    &=\int_{\mathcal{S}} \Big( f(x,a)+\int_0^1 \langle \nabla_a f(x,a+thb),hb\rangle\,dt\Big)\,\nu^h(dx)- \int_{\mathcal{S}} f(x,a)\,\tilde{\nu}(dx)\\
    &\leq\int_{\mathcal{S}} \int_0^1 \langle \nabla_a f(x,a+thb),hb\rangle\,dt\,\nu^h(dx)
    \end{align*}
    where we used $\tilde{\nu}\in \Boptim_\delta(\mu,a)$ for the last inequality.
    Recall that $\nu^h$ converges to $\tilde{\nu}$ in $W^{|\cdot|}_{p-\varepsilon}$ and by assumption $|\nabla_a f(x,\tilde{a})|\leq c(1+|x|^{p-\varepsilon})$ for all $\tilde{a}\in \mathcal{A}$ close to $a$.
    In particular $$\frac{1}{h}\langle \nabla_a f(x,a+thb),hb\rangle\le | \nabla_a f(x,a+thb)| |b|\le c(1+|x|^{p-\varepsilon})$$ for $h$ sufficiently small. 
    As furthermore $(x,a)\mapsto \nabla_a  f(x,a)$ is continuous, we conclude by Lemma \ref{lem:wasserstein.integrals.converge} that 
    \begin{align*}
    \lim_{h \to 0}\frac{1}{h}\int_{\mathcal{S}}\langle \nabla_a f(x,a+thb),hb\rangle\,dt\,\nu^h(dx)=\int_{\mathcal{S}} \langle \nabla_a f(x,a),b\rangle \,\tilde{\nu}(dx).
    \end{align*}
   Lastly, by  by Fubini's theorem and dominated convergence (in $t$)
    \[\frac{1}{h}\int_{\mathcal{S}} \int_0^1 \langle \nabla_a f(x,a+thb),hb\rangle\,dt\,\nu^h(dx)
    \to\int_{\mathcal{S}} \langle \nabla_a f(x,a),b\rangle \,\tilde{\nu}(dx)\]
    as $h\to 0$, which ultimately shows \eqref{eq:diff.parameter.leq.inequality}.
\end{proof}

\begin{lemma}
\label{lem:inf}
Let $q\in (1, \infty)$ and let $f,g\colon\mathcal{S}\to\mathbb{R}^d$ be measurable such that $\int_\mathcal{S} \|f(x)\|^q+\|g\|^q\,\mu(dx)<\infty$ and such that $g\neq 0$ $\mu$-a.s..
Then we have that 
\begin{align}
\label{eq:something}
\begin{split}
&\inf_{\lambda \in \R} \left(\left(\int_{\mathcal{S}} \|f(x)+\lambda g(x)\|^q \, \mu(dx) \right)^{1/q}-\lambda \left(\int_{\mathcal{S}} \|g(x)\|^q\,\mu(dx) \right)^{1/q}\right)\\
&= \int_{\mathcal{S}} \frac{\langle f(x),h(g(x))\rangle }{\|g(x)\|^{1-q} }\,\mu(dx) \cdot \Big(\int_{\mathcal{S}} \|g(x)\|^q\,\mu(dx)\Big)^{1/q-1},
\end{split}
\end{align}
where $h\colon \mathbb{R}^d\setminus\{0\} \to \R^d$ was defined in Lemma \ref{dani:2}.
\end{lemma}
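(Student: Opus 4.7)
The plan is to establish matching lower and upper bounds for $\psi(\lambda) := \|f + \lambda g\|_{L^q(\mu)} - \lambda G^{1/q}$, where $G := \int_{\mathcal{S}} \|g\|^q\,\mu(dx)$; note $G>0$ by the hypothesis $g \neq 0$ $\mu$-a.s., and all integrals are finite by the $L^q$-assumptions on $f,g$. With this notation, the claim reduces to $\inf_{\lambda \in \R} \psi(\lambda) = L$, where $L$ denotes the right-hand side of \eqref{eq:something}.

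For the lower bound I will combine H\"older's inequality with the defining properties of $h$ from Lemma \ref{dani:2}. Since $(q-1)p = q$, H\"older gives $\int \|f+\lambda g\|\,\|g\|^{q-1}\,d\mu \le \|f+\lambda g\|_{L^q(\mu)}\cdot G^{1/p}$. Using $\|h(g)\|_\ast = 1$ and $\langle g, h(g)\rangle = \|g\|$ on $\{g \neq 0\}$, the pointwise inequality $\|f+\lambda g\| \ge \langle f+\lambda g, h(g)\rangle = \langle f, h(g)\rangle + \lambda \|g\|$ holds; on $\{g = 0\}$ the weight $\|g\|^{q-1}$ vanishes, so the inequality is vacuous there. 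Multiplying by $\|g\|^{q-1} \ge 0$, integrating, and rearranging using $1 - 1/p = 1/q$ and $-1/p = 1/q - 1$ yields $\psi(\lambda) \ge L$ for every $\lambda \in \R$.

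For the matching upper bound I will show $\lim_{\lambda \to +\infty} \psi(\lambda) = L$. A preliminary observation is that, by the sandwich $\langle y, h(x)\rangle \le \|x+y\|-\|x\| \le \langle y, h(x+y)\rangle$ combined with continuity of $h$ on $\R^d\setminus\{0\}$ (Lemma \ref{dani:2}(ii)), $\|\cdot\|$ is Gateaux differentiable at every $x \ne 0$ with gradient $h(x)$. Writing $\|f+\lambda g\|^q = \lambda^q \|g + f/\lambda\|^q$ and representing $\|g + f/\lambda\|^q - \|g\|^q$ as an integral of its $t$-derivative, a change of variable gives, $\mu$-a.e.,
\[
\lambda\bigl(\|g + f/\lambda\|^q - \|g\|^q\bigr) = q\int_0^1 \|g + sf/\lambda\|^{q-1}\,\langle h(g + sf/\lambda), f\rangle\, ds \longrightarrow q\,\|g\|^{q-1}\langle h(g), f\rangle.
\]
The integrand is dominated by $q(\|g\| + \|f\|)^{q-1}\|f\|$, which lies in $L^1(\mu)$ by H\"older; dominated convergence in both $s$ and $\mu$ then gives $\int \|f + \lambda g\|^q\,d\mu = \lambda^q G + \lambda^{q-1} C + o(\lambda^{q-1})$, with $C := q\int \|g\|^{q-1} \langle h(g), f\rangle\,d\mu$. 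A first-order expansion of $(1+\epsilon)^{1/q}$ applied to $\epsilon = C/(\lambda G) + o(1/\lambda)$ finally yields $\psi(\lambda) = (C/q)\, G^{1/q - 1} + o(1) = L + o(1)$, which combined with $\psi \ge L$ gives $\inf_\lambda \psi(\lambda) = L$.

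The main obstacle is the asymptotic analysis in the third paragraph: one must thread dominated convergence through both the $s$-integration and the $\mu$-integration at once, and then verify that the $o(\lambda^{q-1})$ remainder really survives division by $\lambda^{q-1}$ and absorption into the $(1+\epsilon)^{1/q}$ Taylor expansion with possibly non-integer exponent $q$. A minor side issue is the behaviour on $\{g=0\}$, which I handle by the direct computation $\lambda\,\|f/\lambda\|^q = \|f\|^q\,\lambda^{1-q} \to 0$ and the convention $\|g\|^{q-1} h(g) := 0$ at $g = 0$, so that set contributes nothing to either side.
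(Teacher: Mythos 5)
Your lower bound is essentially identical to the paper's: both apply H\"older's inequality with the dual element $h(g)$ (the paper packages it as $G(x)$), and the computation $\langle f+\lambda g,h(g)\rangle = \langle f,h(g)\rangle + \lambda\|g\|$ gives exactly the estimate $\psi(\lambda)\ge L$.

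For the upper bound, however, you take a genuinely different route. The paper constructs the near-optimal dual element $G^\lambda$ attached to $g^\lambda := f+\lambda g$, observes that duality gives \emph{equality} $\|g^\lambda\|_{L^q}= \int\langle g^\lambda,G^\lambda\rangle\,d\mu$ while H\"older gives \emph{inequality} $\|g\|_{L^q}\ge\int\langle g,G^\lambda\rangle\,d\mu$, and then the $\lambda$-terms cancel algebraically, leaving $\psi(\lambda)\le\int\langle f,G^\lambda\rangle\,d\mu$; a single application of dominated convergence (using $h(\lambda x)=h(x)$ for $\lambda>0$, so $G^\lambda\to G$ pointwise on $\{g\neq 0\}$) finishes. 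You instead expand $\int\|f+\lambda g\|^q\,d\mu = \lambda^q G + \lambda^{q-1}C + o(\lambda^{q-1})$ via the fundamental theorem of calculus and Gateaux differentiability of $\|\cdot\|$, and then extract the asymptotics of $\psi(\lambda)$ by a Taylor expansion of $(1+\epsilon)^{1/q}$. This is correct — the dominations you give are adequate (the integrand is bounded by $q(\|g\|+\|f\|)^{q-1}\|f\|\in L^1(\mu)$ uniformly in $s\in[0,1]$, $\lambda\ge 1$, and the $\epsilon^2 = O(\lambda^{-2})$ remainder is indeed absorbed) — but it is more work. The paper's cancellation trick avoids differentiating the norm at all and sidesteps the Taylor bookkeeping; your route is more explicit about the asymptotic mechanism and makes clear that the infimum is attained only in the limit $\lambda\to\infty$, which is the same fact the paper exploits implicitly. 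Both proofs reach the same conclusion; the paper's is leaner, yours is more self-explanatory.

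One small remark: you need, and use, the Gateaux differentiability of $\|\cdot\|$ at points $x\neq 0$. This indeed follows from the sandwich you cite together with the continuity of $h$ on $\mathbb{R}^d\setminus\{0\}$, which the paper's Lemma~\ref{dani:2}(ii) provides under strict convexity of $\|\cdot\|_\ast$ (guaranteed by Lemma~\ref{dani:3}). It would have been worth stating explicitly that the set $\{s\in[0,1]: g(x)+sf(x)/\lambda=0\}$ is Lebesgue-null for every $x$ with $g(x)\neq 0$, so that the a.e.\ derivative formula suffices for the fundamental theorem of calculus applied to the locally Lipschitz map $s\mapsto\|g(x)+sf(x)/\lambda\|^q$; as written this is implicit but not a gap.
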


\begin{proof}
	First recall that $h$ is continuous and satisfies $\|x\|=\langle x, h(x) \rangle $ for every $x\neq 0$.
	Now define 
	\[G(x):= \frac{h(g(x))}{\|g(x)\|^{1-q}} \Big(\int_{\mathcal{S}} \|g(z)\|^q\,\mu(dz)\Big)^{1/q-1}
	\quad\text{for }x\in\mathcal{S}.\]
	Similarly, define $G^\lambda$ by replacing $g$ in the definition of $G$ by $g^\lambda:= f + \lambda g$.
	As in the proof of Theorem \ref{thm:main} we compute 
	\[ \int_\mathcal{S} \|G(x)\|_\ast^p \,\mu(dx)=1
	\quad\text{and}\quad
	\left(\int_{\mathcal{S}} \|g(x)\|^q\,\mu(dx) \right)^{1/q}= \int_{\mathcal{S}} \langle g(x),G(x)\rangle \,\mu(dx). \]
	This remains true when $g$ and $G$ are replaced by $g^\lambda$ and $G^\lambda$, respectively.
	Moreover, H\"older's inequality implies that
	\begin{align*}
	\left(\int_{\mathcal{S}} \|g^\lambda(x)\|^q\,\mu(dx) \right)^{1/q} 
	&\geq \int_{\mathcal{S}} \langle g^\lambda (x),G(x)\rangle \,\mu(dx),\\
	\left(\int_{\mathcal{S}} \|g(x)\|^q\,\mu(dx) \right)^{1/q} 
	&\geq \int_{\mathcal{S}} \langle g (x),G^\lambda(x)\rangle \,\mu(dx).	
	\end{align*}
	The first of these two inequalities immediately implies that the left hand side in \eqref{eq:something} is larger than the right hand side.
	
	To show the other inequality, note that $h$ is continuous and satisfies $h(\lambda x)=h(x)$ for $\lambda>0$, hence $h(g(x)) = \lim_{\lambda\to\infty} h(g^\lambda(x))$ for all $x\in\mathcal{S}$ such that $g(x)\neq 0$.
	Consequently one quickly computes $G(x)=\lim_{\lambda\to\infty} G^\lambda(x)$ for all $x\in\mathcal{S}$ such that $g(x)\neq 0$.
	By dominated convergence we conclude that
	\begin{align*}
	&\inf_{\lambda \in \R} \left( \left(\int_{\mathcal{S}} \|f(x)+\lambda g(x)\|^q \, \mu(dx) \right)^{1/q}-\lambda  \left(\int_{\mathcal{S}} \|g(x)\|^q\,\mu(dx) \right)^{1/q} \right)  \\
	&\le \lim_{\lambda\to \infty} \left(  \int_{\mathcal{S}} \langle f(x)+\lambda g(x), G^{\lambda}(x)\rangle \,\mu(dx)-\lambda  \int_{\mathcal{S}} \langle g(x),G^{\lambda}(x)\rangle \,\mu(dx)\right)\\
	&=\int_{\mathcal{S}} \langle f(x),G(x)\rangle \,\mu(dx)
	\end{align*}
	and the claim follows.
\end{proof}

Let us lastly give the proof of Theorem  \ref{thm:sens} for general seminorms.

\begin{proof}[Proof of Theorem \ref{thm:sens}]
Recall the convention that $\nabla_x \nabla_a f(x,a)\in \R^{k\times d}$ and $\nabla_x f(x,a)\in \R^{d\times 1}$, $\nabla_a f(x,a)\in \R^{k\times 1}$ as well as $h(\cdot)/0=0$.
Further recall that $a^\opt\in\mathcal{A}^\opt(0)$ and $a^\opt_\delta\in\mathcal{A}^\opt(\delta)$ converge to $a^\opt$ as $\delta\to0$.
In order to show
\begin{align*}
\lim_{\delta \to 0}\frac{a^\opt_{\delta}-a^\opt}{\delta}
&=-\Big(\int_{\mathcal{S}} \|\nabla_x f(z,a^\opt)\|^q\,\mu(dz)\Big)^{\frac{1}{q}-1} (\nabla^2_a V(0,a^\opt))^{-1}  \\
&\qquad\cdot \int_{\mathcal{S}} \frac{\nabla_{x}\nabla_a f(x,a^\opt)\, h(\nabla_x f(x,a^\opt))}{\|\nabla_x f(x,a^\opt)\|^{1-q}} \, \mu(dx),
\end{align*}
we first show that for every $i\in\{1,\dots,k\}$
\begin{align}\label{eq:hedgesens}
\lim_{\delta \to 0}\frac{-\nabla_{a_i}V(0,a^\opt_{\delta})}{\delta} 
&= \int_{\mathcal{S}} \nabla_x \nabla_{a_i}f(x,a^\opt)\frac{h(\nabla_x( f(x,a^\opt)) }{ \|\nabla_x f(x),a^\opt)\|^{1-q}}\,\mu(dx)\\
&\qquad\cdot \Big( \int_{\mathcal{S}} \|\nabla_x f(x,a^\opt)\|^q\,\mu(dx)\Big)^{1/q-1},\nonumber
\end{align}
where we recall that $\nabla_{a_i}V(0,a^\opt_{\delta})$ is the $i$-th coordinate of the vector $\nabla_{a}V(0,a^\opt_{\delta})$.
We start with the ``$\le$"-inequality in \eqref{eq:hedgesens}. 
For any $a\in \interior{\mathcal{A}}$, the fundamental theorem of calculus implies that
\begin{align*}
\nabla_{a} f(y,a)-\nabla_{a} f(x,a)&=\int_0^1  \nabla_x \nabla_{a} f(x+t(y-x),a) (y-x) \,dt.
\end{align*}
Moreover, by Lemma \ref{lem:grad_hedge_new} the function $a\mapsto V(\delta,a)$ is (one-sided) directionally differentiable at $a^\opt_{\delta}$ for all $\delta>0$ small and thus for all $i\in \{1, \dots, k\}$
\begin{align}
\label{eq:inequality1}
 & \sup_{\nu \in B_\delta^\opt(\mu,a^\opt_{\delta})} \int_{\mathcal{S}} \nabla_{a_i} f(x,a^\opt_{\delta})\,\nu(dx)\geq 0,
 \end{align}
where we recall $B_\delta^\opt(\mu,a^\opt_\delta)$ is the set of all $\nu\in B_\delta(\mu)$ for which $\int_{\mathcal{S}} f(x,a^\opt_\delta)\,\nu(dx)=V(\delta, a^\opt_\delta)=V(\delta)$.
We now encode the optimality of $\nu$ in $B_\delta^\opt(\mu,a^\opt_\delta)$ via a Lagrange multiplier to obtain
\begin{align}\label{eq:Lagrange}
\begin{split}
&\sup_{\nu \in B_\delta^\opt(\mu,a^\opt_{\delta})} \int_{\mathcal{S}} \nabla_{a_i} f(x,a^\opt_{\delta})\,\nu(dx)\\
&= \sup_{\nu \in B_{\delta}(\mu) }\inf_{\lambda \in \R} \int_{\mathcal{S}}
\big[\nabla_{a_i}f(y,a^\opt_{\delta})+\lambda (f(y,a^\opt_{\delta})
-V(\delta))\big]\nu(dy).
\end{split}
\end{align}
In a similar manner, we trivially have 
\begin{align}\label{eq:Lagrange2}
\int_{\mathcal{S}} \nabla_{a_i}f(x,a^\opt_{\delta})\,\mu(dx)=\int_{\mathcal{S}} \big[\nabla_{a_i}f(x,a^\opt_{\delta}) +\lambda (f(x,a^\opt_{\delta})-V(0,a^\opt_{\delta}))\big]\mu(dx)
\end{align}
for any $\lambda\in\mathbb{R}$, as $\int_{\mathcal{S}} f(x,a^\opt_{\delta}) \,\mu(dx)=V(0,a^\opt_{\delta})$.
Applying \eqref{eq:inequality1} and then \eqref{eq:Lagrange}, \eqref{eq:Lagrange2} we thus conclude for $i\in \{1, \dots, k\}$ 
\begin{align}
&-\nabla_{a_i}V(0,a^\opt_{\delta})
\le\sup_{\nu \in B_{\delta}^\opt(\mu)}\int_{\mathcal{S}} \nabla_{a_i}f(y,a^\opt_{\delta})\,\nu(dy)-\nabla_{a_i}V(0,a^\opt_{\delta})\nonumber\\
&=\sup_{\nu \in B_{\delta}(\mu) }\inf_{\lambda \in \R}\bigg(\int_{\mathcal{S}}
\big[\nabla_{a_i}f(y,a^\opt_{\delta})+\lambda (f(y,a^\opt_{\delta}) -V(\delta))\big] \, \nu(dy)\nonumber\\
&\quad - \int_{\mathcal{S}} \big[\nabla_{a_i}f(x,a^\opt_{\delta}) +\lambda (f(x,a^\opt_{\delta})-V(0,a^\opt_{\delta}))\big]\mu(dx)\bigg)\nonumber\\
\begin{split}
&=
\sup_{\nu \in B_{\delta}(\mu) }\inf_{\lambda \in \R}\bigg(\int_{\mathcal{S}}
\Big[\nabla_{a_i}f(y,a^\opt_{\delta})+\lambda f(y,a^\opt_{\delta})\Big] \,\nu(dy)  \\
&\quad -\int_{\mathcal{S}} \Big[\nabla_{a_i}f(x,a^\opt_{\delta})+\lambda f(x,a^\opt_{\delta})\Big]\,\mu(dx) -\lambda (V(\delta)-V(0,a^\opt_\delta))\bigg). 
\end{split}
\label{eq:boring.2}
\end{align}
As in the proof of Lemma \ref{lem:grad_hedge_new} we note that $B_{\delta}(\mu)$ is compact in $W_{p-\epsilon}^{|\cdot|}$ and both terms inside the $\nu(dy)$ grow at most as $c(1+|y|^{p-\varepsilon})$ by Assumption \ref{ass:sens}.
Thus using \cite[Cor.\ 2, p.\ 411]{min_max_terkelsen1972}  we can interchange the infimum and supremum in the last line above. 
Recall that 
\[ V(\delta)=\sup_{\nu \in B_{\delta}(\mu)}\int_{\mathcal{S}} f(y,a^\opt_\delta)\,\nu(dy),\] 
whence \eqref{eq:boring.2} is equal to
\begin{align*}
&\inf_{\lambda \in \R}\bigg( \sup_{\pi \in C_{\delta}(\mu) } \int_{\mathcal{S}\times\mathcal{S}}
\Big[\nabla_{a_i}f(y,a^\opt_{\delta}) - \nabla_{a_i}f(x,a^\opt_{\delta})+\lambda ( f(y,a^\opt_{\delta})-f(x,a^\opt_{\delta}))\Big]\,\pi(dx,dy)\nonumber \\
&-\lambda \sup_{\pi \in C_{\delta}(\mu) } \int_{\mathcal{S}\times\mathcal{S}} f(y,a^\opt_\delta)-f(x,a^\opt_\delta)\,\pi(dx,dy) \bigg).
\end{align*}
For every fixed $\lambda\in\mathbb{R}$ we can follow the arguments in the proof of Theorem \ref{thm:main} to see that, when divided by $\delta$, the term inside the infimum converges to 
\begin{align}\label{eq:theendisnear}
\left( \int_{\mathcal{S}} \left\| \nabla_x \nabla_{a_i}f(x,a^\opt)+\lambda \nabla_x f(x,a^\opt) \right\|^q\, \mu(dx) \right)^{1/q} -\lambda \left( \int_{\mathcal{S}} \left\| \nabla_x f(x,a^\opt) \right\|^q \, \mu(dx)\right)^{1/q}
\end{align}
as $\delta\to0$.
Note that following these arguments requires the following properties, which are a direct consequence of Assumptions \ref{ass:main} and Assumption \ref{ass:sens}:
\begin{itemize}
  \item $(x,a) \mapsto f(x,a)$ is differentiable on $\interior{\mathcal{S}}\times \interior{\mathcal{A}}$,
  \item $x\mapsto \nabla_{a_i} f(x,a)$ is differentiable on $\interior{\mathcal{S}}$ for every $a\in\cA$,
  \item $(x,a)\mapsto   \nabla_x f(x,a)$ is continuous,
  \item  $(x,a)\mapsto  \nabla_x\nabla_{a_i} f(x,a)$  is continuous,
  \item  for every $r>0$ there is $c>0$ such that $| \lambda \nabla_x f(x,a)|\leq c(1+|x|^{p-1})$ for all $x\in\cS$ and $a\in \cA$ with $|a|\leq r$.
  \item  for every $r>0$ there is $c>0$ such that $| \nabla_x\nabla_{a_i} f(x,a)|\leq c(1+|x|^{p-1})$ for all $x\in\cS$ and $a\in \cA$ with $|a|\leq r$.
  \item For all $\delta\ge 0$ sufficiently small we have $\Aoptim{\delta}\neq\emptyset$ and for every sequence $(\delta_n)_{n\in \N}$ such that $\lim_{n\to \infty} \delta_n=0$ and $(a^\opt_n)_{n\in \N}$ such that $a^\opt_n\in \Aoptim{\delta_n}$ for all $n\in \N$ there is a subsequence which converges to some $a^\opt\in \Aoptim{0}$.
\end{itemize}

Suppose first that $\nabla_x \nabla_{a_i}f(x,a^\opt)=0$ $\mu$-a.s..
Then the right hand side of \eqref{eq:hedgesens} is equal to zero.
Moreover, taking $\lambda=0$ in \eqref{eq:theendisnear}, we also have that $\nabla_{a_i} V(0,a^\opt_\delta)\leq 0$, which proves that indeed the left hand side in \eqref{eq:hedgesens} is smaller than the right hand side. 

Now suppose that $\nabla_x f(x,a^\opt)\neq 0$ $\mu$-a.s..
Then, using the inequality ``$\limsup_\delta \inf_\lambda\leq \inf_\lambda\limsup_\delta$'' and Lemma \ref{lem:inf} to compute the last term (noting that $ \nabla_x f(x,a^\opt)\neq 0$ by assumption), we conclude that indeed the 
\begin{align*}
\limsup_{\delta \to 0} \frac{-\nabla_{a_i}V(0,a^\opt_{\delta}) }{\delta}
&\leq \int_{\mathcal{S}} \nabla_x \nabla_{a_i}f(x,a^\opt)\frac{h(\nabla_x( f(x,a^\opt)) }{ \|\nabla_x f(x),a^\opt)\|^{1-q}}\,\mu(dx)\\
&\qquad\cdot \Big( \int_{\mathcal{S}} \|\nabla_x f(x,a^\opt)\|^q\,\mu(dx)\Big)^{1/q-1}.
\end{align*} 

To obtain the reverse ``$\ge$"-inequality in \eqref{eq:hedgesens} follows by the very same arguments. Indeed, Lemma \ref{lem:grad_hedge_new} implies that 
\begin{align*}
& \inf_{\nu \in B_\delta^\opt(\mu,a^\opt_{\delta})} \int_{\mathcal{S}} \nabla_{a_i} f(x,a^\opt_{\delta})\,\nu(dx)\leq 0
\end{align*}
for all $i \in \{1, \dots, k\}$ and we can write
\begin{align*}
&-\nabla_{a_i}V(0,a^\opt_{\delta})
\ge  \inf_{\nu \in B_\delta^\opt(\mu)} \int_{\mathcal{S}} \nabla_{a_i} f(y, a^\opt_{\delta})\,\nu(dy) -\int_{\mathcal{S}} \nabla_{a_i} f(y,a^\opt_{\delta})\,\mu(dx)\\
&= \inf_{\nu \in B_\delta(\mu)}\sup_{\lambda \in \R} \int_{\mathcal{S}} \big[ \nabla_{a_i} f(y, a^\opt_{\delta})+\lambda(f(y,a^\opt_{\delta})-V(\delta) )\big]\,\nu(dy)  -\int_{\mathcal{S}} \nabla_{a_i} f(x,a^\opt_{\delta})\,\mu(dx)\\
\end{align*}
From here on we argue as in the ``$\le$"-inequality to conclude that \eqref{eq:hedgesens} holds.\\

By assumption the matrix $\nabla_{a}^2 V(0,a^\opt)$ is invertible.
Therefore, in a small neighborhood of $a^\opt$, the mapping $\nabla_a V(0,\cdot)$ is invertible.
In particular 
\[a^\opt_{\delta}=(\nabla_{a} V(0,\cdot))^{-1}\left(\nabla_{a}V(0,a^\opt_{\delta})\right)
\quad\text{and}\quad
a^\opt=(\nabla_{a} V(0,\cdot))^{-1}\left(0\right),\]
where the second equality holds by the first order condition for optimality of $a^\opt$.
Applying the chain rule and using \eqref{eq:hedgesens} gives 
\begin{align*}
\lim_{\delta \to 0}\frac{a^\opt_{\delta}-a^\opt}{\delta}
&=(\nabla_{a}^2V(0,a^\opt))^{-1}\cdot \ \lim_{\delta \to 0}\frac{\nabla_{a}V(0,a^\opt_{\delta})}{\delta}\\
&=- (\nabla^2_aV(0,a^\opt))^{-1}  \Big(\int_{\mathcal{S}} \|\nabla_x f(z,a^\opt)\|^q\,\mu(dz)\Big)^{1/q-1}\\
&\quad  \cdot \int_{\mathcal{S}} \frac{\nabla_{x}\nabla_af(x,a^\opt) h(\nabla_x f(x,a^\opt))}{\|\nabla_x f(x,a^\opt)\|^{1-q}} \, \mu(dx).
\end{align*}
This completes the proof.
\end{proof}

\end{appendix}

\end{document}